\newtheorem{definition}{Definition}[section]
\newtheorem{remark}[definition]{Remark}
\newtheorem{example}[definition]{Example}
\newenvironment{ack}{\noindent{\bf Acknowledgements}.}{}
\newtheorem{lemma}[definition]{Lemma}
\newtheorem{proposition}[definition]{Proposition}
\newtheorem{theorem}[definition]{Theorem}
\def\P{{\mathbb{P}}}
\def\K{{\mathbb{K}}}
\def\kK{{\mathcal{K}}}
\def\N{{\mathbb{N}}}
\def\cE{{\mathcal{E}}}
\def\cG{{\mathcal{G}}}
\def\cF{{\mathcal{F}}}
\def\cM{{\mathcal{M}}}
\def\cP{{\mathcal{P}}}
\def\T{{\underline{T}}}
\def\t{{\underline{t}}}
\def\X{{\underline{X}}}
\def\u{{\underline{u}}}
\def\Z{{\mathbb{Z}}}
\def\bv{{\underline{v}}}
\def\bM{{\mathbb{M}}}
\def\C{{\mathrm{C}}}
\def\S{{\underline{S}}}
\def\bdeg{{\mbox{bideg}}}
\def\cD{{\mathcal{D}}}
\def\cG{{\mathcal{G}}}
\begin{document}
\title[Generators of the Rees Algebra of a parametrization with $\mu=2$]{Minimal generators of the defining ideal of the Rees Algebra associated to a rational plane parametrization with $\mu=2$}

\author{Teresa Cortadellas Ben\'itez}
\address{Universitat de Barcelona, Facultat de Formaci\'o del Professorat.
Passeig de la Vall d'Hebron 171,
08035 Barcelona, Spain}
\email{terecortadellas@ub.edu}

\author{Carlos D'Andrea}
\address{Universitat de Barcelona, Facultat de Matem\`atiques.
Gran Via 585, 08007 Barcelona, Spain} \email{cdandrea@ub.edu}
\urladdr{http://atlas.mat.ub.es/personals/dandrea}
\thanks{Both authors are supported by the Research Project MTM2010--20279 from the
Ministerio de Ciencia e Innovaci\'on, Spain}

\subjclass[2010]{Primary 13A30; Secondary 14H50}

\begin{abstract}
We exhibit a set of minimal generators of the defining ideal of the Rees Algebra associated to the ideal of three bivariate homogeneous polynomials parametrizing a proper rational curve in projective plane, having  a minimal syzygy of degree $2.$
\end{abstract}
\maketitle

\section{Introduction}\label{s0}
Let $\K$ be an algebraically closed field, and $u_0(T_0,T_1),\,u_1(T_0, T_1),\, u_2(T_0,T_1)\in\K[T_0,T_1]$ homogeneous polynomials of the same degree $d\geq1$
without common factors. Denote with $\T$ the sequence $T_0, T_1,$ set $R:=\K[\T]$ and $I:=\langle
u_0(\T),u_1(\T),u_2(\T) \rangle$ for the ideal generated by these polynomials in $R$. The {\em Rees
Algebra} associated to $I$ is defined as $\mbox{Rees}(I):=\bigoplus_{n\geq0}I^nZ^n,$  where
$Z$ is a new variable. Let $X_0,\,X_1,\,X_2$ be another three variables and set $\X=X_0,\,X_1,\,X_2.$ There is a graded epimorphism of $K[\T]$-algebras
defined by
\begin{equation}\label{ris}
\begin{array}{cccc}
   {\Phi}:&\K[\T][\X]&\to&\mbox{Rees}({I})\\
&X_i&\mapsto&u_i(\T)\,Z.
  \end{array}
\end{equation}
Set $\kK:=\ker ({\Phi})$. Note that a description of $\kK$ allows also a full characterization of
$\mbox{Rees}(I)$ via \eqref{ris}. This is why we call it \emph{the defining ideal of the Rees Algebra} associated to $I$.
\par
The search for explicit generators of $\kK$ is an active area of research in both the Commutative Algebra and also the Computer Aided Geometric Design community. Indeed, the defining polynomials of $I$ induce a rational map
\begin{equation}\label{param}
\begin{array}{cccc}
\phi:&\P^1&\to&\P^2\\
&(t_{0}:t_{1})&\mapsto&(u_0(t_{0},t_{1}):u_1(t_{0},t_{1}):u_2(t_{0},t_{1})).
  \end{array} \end{equation}
whose image is an irreducible algebraic plane curve $\C,$ defined by the zeros of a homogeneous irreducible element of $\K[X_0,X_1,X_2].$ This polynomial can be
computed easily by applying elimination techniques on the input parametrization, but it is easy to see that the elimination can also be applied on any suitable pair of minimal elements in $\kK$,  leading to better algorithms for computing invariants associated to $\phi$. This is why finding such elements are of importance in the CAGD community, see for instance \cite{SC95,SGD97,CSC98,ZCG99,CGZ00,cox08}.
\par A lot of progress has been made in the last years: a whole description of $\kK$ has been given in the case when  $\C$ has a point of maximal multiplicity in \cite{CHW08,bus09,CD10}, an extension of this situation to ``de Jonqui\` eres parametrizations'' is the subject of \cite{HS12}. In \cite{bus09},  a detailed description of generators  of $\kK$ via inertia forms associated to the syzygies of $I$ is done; the case when $\phi$ has an inverse of degree $2$ is the subject of \cite{CD13}; extensions to surfaces and/or non planar curves have also been considered in \cite{CCL05, CD10, HW10, KPU09};  connections between singularities and minimal elements in $\kK$ are studied in \cite{CKPU11, KPU13}.
\par\smallskip
In this paper, we give  a complete description of a minimal set of generators of the defining ideal of the Rees Algebra associated to $I$ in the case when there is a minimal syzygy of $I$ of degree $2$ (in the language of $\mu$-bases, this means just $\mu=2$). Our main results are given in Sections \ref{sing} and \ref{nosing}, where we make explicit these generators in two different cases: when there is a singular point of multiplicity $d-2$ (Theorems \ref{mt1o} for $d$ odd and \ref{mt1e} for $d$ even), and when all the singularities are double points (Theorem \ref{mt2}). The latter situation is just a refinement of \cite[Proposition $3.2$]{bus09}, where an explicit set of generators of $\kK$ is actually given. Our contribution in this case is to show that Bus\'e's family is essentially minimal: there is only one member in this family which can be removed from the list and yet generate the whole $\kK$.

\par  There is some general evidence that the more complicated the singularity, the simpler the description of $\mbox{Rees}(I)$ should be, see for instance \cite{CKPU11}. The situation for $\mu=2$ is not an exception, indeed from Theorems \ref{mt1o},\,\ref{mt1e} and \ref{mt2} we easily derive that the number of minimal generators of $\kK$  is of the order $\mathcal{O}\left(\frac{d}2\right)$ in the case of a very singular point, and
$\mathcal{O}\left(\frac{d^2}2\right)$ otherwise. Note also that the generators we present in the very singular point case are not specializations of the larger family produced in \cite{bus09} (which it was shown in that paper that they are always elements of $\kK$), but they actually appear at lower bidegrees. Moreover, we show in Section \ref{adjoints} that in the very singular case not all the elements in $\kK_{1,*}$ are pencils of adjoints, as shown also by Bus\'e in the other case. We should mention that a few days after we posted a preliminary version of these results (\cite{CD13b}) in the arxiv, the article \cite{KPU13} was uploaded in the same database. In that work, the authors get the same description we achieved in Section \ref{sing}  with  a  refined kit of tools from local cohomology and linkage.

\smallskip
The paper is organized as follows. In Section \ref{s1} we review some well-known facts about elements in $\kK$, and focus in the case where the curve $\C$ has a very singular point.  We detect in Theorem \ref{mtv} a special family which is part  of a  minimal set of generators of $\kK$. The rest of the paper focuses on the case $\mu=2$. In Section \ref{sing} we show that if the curve has a very singular point, we only have to add  one (if $d$ is odd) or two (if $d$ is even) elements to this special family to get a whole set of minimal generators of $\kK.$ This is the content of Theorems \ref{mt1o} ($d$ odd) and \ref{mt1e} ($d$ even). 

\par We then  introduce pencils of adjoints in Section \ref{adjoints}, and show in Theorem \ref{ttt} that $\kK_{1,*}$ strictly contains the subspace of pencils of adjoints in the case of a very singular point. The other case has already been studied in \cite{bus09}. 

\par Section \ref{nosing} deals with the case when all the singularities are mild (i.e. no multiplicity larger than two). In this case, we show in Theorem \ref{mt2} that Bus\'e's family of generators of $\kK$ given in \cite[Proposition $3.2$]{bus09} is essentially minimal in the sense that there is only one of them that can be removed from the list. The paper concludes with a brief discussion of how these methods may not work for larger values of $\mu$ in Section \ref{final}.

\smallskip
\begin{ack}
We are grateful to Eduardo Casas-Alvero for several discussions on adjoint curves. All our computations and experiments were done with the aid of the softwares {\tt Macaulay 2} \cite{mac} and {\tt Mathematica} \cite{math}.
\end{ack}
\bigskip
\section{Preliminaries on Rees Algebras and singularities}\label{s1}
Set  $\u(\T):=(u_0(\T),\,u_1(\T),\,u_2(\T))$ for short. By its definition, $\kK\subset\K[\T,\X]$ is a bihomogeneous ideal,
 which can be characterized as follows:
\begin{equation}\label{oso}
P(\T,\X)\in \kK_{i,j}\iff \bdeg(P(\T,\X))=(i,j)\ \mbox{and} \ P(\T,\u(\T))=0.
\end{equation}
There is a natural identification of
$\kK_{\ast,1}$ with $\mbox{Syz}(I),$ the first module of syzygies of $I$.
A straightforward application of  Hilbert Syzygy Theorem shows  that $\mbox{Syz}(I)$ is a free $R$-module of rank $2$ generated by two
elements, one of $\T$-degree $\mu$ for an integer $\mu$ such
that $0\leq\mu\leq\frac{d}{2}$, and the other of $\T$-degree $d-\mu$.
In the Computer Aided Geometric Design community, such a basis is called a $\mu$-basis of $I$ (see for instance \cite{CSC98,CGZ00,CCL05}). Indeed, by the
Hilbert-Burch Theorem, $I$ is generated by the maximal
minors of a $3\times 2$ matrix $\varphi$ and  the
 homogeneous resolution of $I$ is
\begin{equation}\label{sec}
0\longrightarrow R(-d-\mu)\oplus R(-d-(d-\mu))
\stackrel{\varphi}{\longrightarrow} R(-d)^3
\stackrel{(u_0,u_1,u_2)}{\longrightarrow} I\rightarrow  0.\end{equation}
This matrix is called the Hilbert-Burch matrix of $I$ and its
columns describe the $\mu$-basis. In the sequel, we
will denote with $P_{\mu ,1}(\T,\X),\,Q_{d-\mu ,1}(\T,\X)\in
\kK_{*,1}$ a (chosen) set of two elements in $\mbox{Syz}(I)$ which are a basis of this module over $R.$
\par
All along this paper we will work under the assumption that the map $\phi$ defined in \eqref{param} is ``proper'', i.e. birational. If this is not the case, then by L\"uroth's Theorem one can prove that $\phi$ is the composition of a proper map $\overline{\phi}:\P^1\to\P^2$ with a polynomial automorphism $\underline{p}:\P^1\to\P^1,$ and our results can be easily translated to this case.

\par The following statements have been proven in  \cite{CD13}. We will use them in the sequel. 
\begin{proposition} (\cite[Section $1$ and Lemma $3.10$]{CD13})\label{ssing}
Let $\phi$ be as in \eqref{param} be a proper parametrization of a rational plane curve $\C$, and let $T_0\mathcal{B}_\ell(\X)-T_1\mathcal{A}_\ell(\X)\in\kK_{1,\ell}$ a non zero element. Then, the map
$$\begin{array}{cccc}
\psi:&\C&\dashrightarrow&\P^1\\
&(x_0:x_1:x_2)&\mapsto&\left(\mathcal{A}_\ell(x_0,x_1,x_2):\mathcal{B}_\ell(x_0,x_1,x_2)\right)
\end{array}
$$
 is an inverse of $\phi$. Moreover, the singularities of $\C$ are contained in the set of common zeroes of $\{\mathcal{A}_\ell(\X),\,\mathcal{B}_\ell(\X)\}$ in $\P^2.$ Reciprocally, any inverse of $\phi$ induces a non zero element in $\kK_{1,\ell}$ via the correspondence shown above, with $\ell$ being the degree of the polynomials defining $\phi^{-1}.$
\end{proposition}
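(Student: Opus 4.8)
The plan is to establish the three assertions in turn: (1) that $\psi$ is a well-defined rational map on $\mathcal{C}$ that is inverse to $\phi$; (2) that the singular locus of $\mathcal{C}$ sits inside $V(\mathcal{A}_\ell,\mathcal{B}_\ell)$; and (3) the converse, that an inverse of $\phi$ produces an element of $\kK_{1,\ell}$. For (1), I would start from the defining property \eqref{oso}: since $T_0\mathcal{B}_\ell(\X)-T_1\mathcal{A}_\ell(\X)\in\kK_{1,\ell}$, substituting $X_i\mapsto u_i(\T)$ gives the polynomial identity $T_0\,\mathcal{B}_\ell(\u(\T))=T_1\,\mathcal{A}_\ell(\u(\T))$ in $\K[\T]$. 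Reading this identity up to a common scalar factor shows that $(\mathcal{A}_\ell(\u(\T)):\mathcal{B}_\ell(\u(\T)))=(T_0:T_1)$ as points of $\P^1$ wherever both sides are defined; this is exactly the statement that $\psi\circ\phi=\mathrm{id}_{\P^1}$ as rational maps, hence $\psi$ is a rational inverse of $\phi$ on $\mathcal{C}$ (using that $\phi$ is dominant onto $\mathcal{C}$, so $\psi$ is determined on a dense subset of $\mathcal{C}$). One has to check $\psi$ is not identically indeterminate on $\mathcal{C}$, i.e. that $\mathcal{A}_\ell,\mathcal{B}_\ell$ do not both vanish on all of $\mathcal{C}$: if they did, the identity above would force $T_0\mathcal{B}_\ell(\u)-T_1\mathcal{A}_\ell(\u)$ to vanish on $\mathcal C$ for trivial reasons, but a careful look shows $\mathcal{A}_\ell(\u(\T))$ cannot be the zero polynomial since otherwise $\psi$ would fail to invert $\phi$; so the element would be zero in $\kK_{1,\ell}$, contradicting the hypothesis.

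For assertion (2), the key point is that $\phi\colon\P^1\to\mathcal{C}$ is a morphism (the $u_i$ have no common factor, so it is everywhere defined) and it is birational, so it is an isomorphism over the smooth locus of $\mathcal{C}$ but can fail to be injective exactly over the singular points. If $p=(x_0:x_1:x_2)$ is a singular point of $\mathcal{C}$, then $\phi^{-1}(p)$ either contains two or more points of $\P^1$ or is a single point where $\psi$ cannot be a local inverse; in either case $\psi$ cannot be defined at $p$, which forces $\mathcal{A}_\ell(x_0,x_1,x_2)=\mathcal{B}_\ell(x_0,x_1,x_2)=0$. More concretely: if $\psi$ were regular at a singular point $p$ with $\psi(p)=(t_0:t_1)$, then by the functional identity $\psi\circ\phi=\mathrm{id}$ we would get $\phi(t_0:t_1)=p$ and $\psi$ would provide a local inverse of $\phi$ near $p$, making $\phi$ an isomorphism in a neighbourhood of $(t_0:t_1)$ and hence $p$ smooth — a contradiction. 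So every singular point lies in the common zero set of the numerator and denominator of $\psi$, which is $V(\mathcal{A}_\ell,\mathcal{B}_\ell)$.

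For the converse (3), given any birational inverse $\phi^{-1}\colon\mathcal{C}\dashrightarrow\P^1$, write it in homogeneous coordinates as $(x_0:x_1:x_2)\mapsto(\mathcal{A}(\X):\mathcal{B}(\X))$ with $\mathcal{A},\mathcal{B}\in\K[\X]$ homogeneous of the same degree $\ell$ and without common factor on $\mathcal{C}$. The identity $\phi^{-1}\circ\phi=\mathrm{id}_{\P^1}$ means $(\mathcal{A}(\u(\T)):\mathcal{B}(\u(\T)))=(T_0:T_1)$, i.e.\ $T_0\,\mathcal{B}(\u(\T))-T_1\,\mathcal{A}(\u(\T))$ vanishes identically in $\K[\T]$, which by \eqref{oso} says precisely that $T_0\mathcal{B}(\X)-T_1\mathcal{A}(\X)\in\kK_{1,\ell}$; it is nonzero because $\mathcal{A},\mathcal{B}$ are not both zero on $\mathcal{C}$, hence $\mathcal{A}(\u(\T))$ and $\mathcal{B}(\u(\T))$ are not both the zero polynomial. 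I expect the main technical obstacle to be assertion (2): one must argue cleanly that indeterminacy of $\psi$ at a point is equivalent to that point being a non-isomorphism value of $\phi$, which requires being careful about the distinction between set-theoretic fibers and the scheme-theoretic/local-ring statement of birationality; the cited results \cite[Section $1$ and Lemma $3.10$]{CD13} presumably package exactly this, so in the write-up I would lean on them rather than reprove the birational geometry from scratch.
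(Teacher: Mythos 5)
First, a caveat on the comparison: the paper does not prove this proposition itself — it is imported verbatim from \cite{CD13} (Section 1 and Lemma 3.10) — so there is no in-paper argument to measure you against. Your overall strategy is nevertheless the natural (and surely the intended) one: use \eqref{oso} to get the polynomial identity $T_0\mathcal{B}_\ell(\u(\T))=T_1\mathcal{A}_\ell(\u(\T))$, factor out the common cofactor to conclude $\psi\circ\phi=\mathrm{id}_{\P^1}$, use the fact that a birational morphism from $\P^1$ is an isomorphism exactly over the smooth locus to locate the singularities inside $V(\mathcal{A}_\ell,\mathcal{B}_\ell)$, and run the computation backwards for the converse. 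Parts (2) and (3) of your sketch are sound as outlines (modulo the slip where you invoke $\psi\circ\phi=\mathrm{id}$ to deduce $\phi(\psi(p))=p$; that step uses $\phi\circ\psi=\mathrm{id}_{\C}$ on the locus where $\psi$ is defined).

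There is, however, one genuinely broken step: the non-degeneracy check in part (1). You argue that if $\mathcal{A}_\ell(\u(\T))$ and $\mathcal{B}_\ell(\u(\T))$ were both the zero polynomial, then ``the element would be zero in $\kK_{1,\ell}$, contradicting the hypothesis.'' That is a non sequitur: membership in $\kK$ means precisely that the substitution $\X\mapsto\u(\T)$ annihilates the polynomial, so $\mathcal{A}_\ell(\u(\T))=\mathcal{B}_\ell(\u(\T))=0$ is perfectly compatible with $T_0\mathcal{B}_\ell(\X)-T_1\mathcal{A}_\ell(\X)$ being a nonzero polynomial. Concretely, $\mathcal{A}_\ell(\u(\T))=0$ if and only if $\cE_d(\X)$ divides $\mathcal{A}_\ell(\X)$ (because $\kK\cap\K[\X]=\langle\cE_d(\X)\rangle$), so for instance $-T_1\,\cE_d(\X)$ is a nonzero element of $\kK_{1,d}$ whose associated ``map'' $(x_0:x_1:x_2)\mapsto(\cE_d(x_0,x_1,x_2):0)$ is indeterminate at every point of $\C$; the statement really requires the element not to be a multiple of $\cE_d(\X)$, which is automatic when $\ell<d$ (the only range used in this paper) and is implicit in the setting of \cite{CD13}. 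The correct argument is: since $T_0$ and $T_1$ are coprime, write $\mathcal{A}_\ell(\u(\T))=T_0\,h(\T)$ and $\mathcal{B}_\ell(\u(\T))=T_1\,h(\T)$; the cofactor $h(\T)$ is nonzero exactly when the element is not divisible by $\cE_d(\X)$, and off the zeros of $h$ one gets $\psi\circ\phi=\mathrm{id}_{\P^1}$. With that repair (and the hypothesis made explicit), the rest of your proposal goes through.
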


Denote with $\cE_d(\X)$ the irreducible polynomial of degree $d$ defining $\C,$ it is a primitive element generating $\kK\cap\K[\X].$ Note that it is well-defined up to a nonzero constante in $\K.$

\begin{proposition}(\cite[Proposition $4.1$]{CD13})\label{multiple}
Suppose $T_0\cF^1_{k_0}(\X)-T_1\cF^0_{k_0}(\X)\in\kK_{1,k_0}$ for some $k_0\in\N.$ Then, $G_{i,j}(\T,\X)\in\kK_{i,j}$ if and only if
$G_{i,j}(\cF^0_{k_0}(\X),\cF^1_{k_0}(\X),\X)$ is a multiple of $\cE_d(\X)$.
\end{proposition}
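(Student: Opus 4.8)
The plan is to use Proposition 2.4, which guarantees that $\psi(x_0:x_1:x_2)=(\cF^0_{k_0}(\X):\cF^1_{k_0}(\X))$ is an inverse of $\phi$, so that substituting $(T_0,T_1)\mapsto(\cF^0_{k_0}(\X),\cF^1_{k_0}(\X))$ is, on the curve $\C$, the same as composing with $\psi$. First I would observe that the map
$$
\begin{array}{cccc}
\sigma:&\K[\T,\X]&\to&\K[\X]\\
&G(\T,\X)&\mapsto&G\bigl(\cF^0_{k_0}(\X),\cF^1_{k_0}(\X),\X\bigr)
\end{array}
$$
is a ring homomorphism, and that the statement amounts to showing $\sigma^{-1}\bigl(\langle\cE_d(\X)\rangle\bigr)=\kK$ (intersected with each bidegree, but that is automatic since $\sigma$ sends $\kK_{i,j}$ into the homogeneous piece of degree $d\,i+j$ in $\K[\X]$ when the chosen pencil has degree $k_0=1$; in general one keeps track of degrees with a harmless bookkeeping of powers).

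For the direction "$G\in\kK\Rightarrow\sigma(G)\in\langle\cE_d\rangle$": since $G(\T,\u(\T))=0$ by \eqref{oso}, specializing further at $\T=\u(\t)$ inside $\sigma(G)$ one gets $\sigma(G)\bigl(\u(\t)\bigr)=G\bigl(\cF^0_{k_0}(\u(\t)),\cF^1_{k_0}(\u(\t)),\u(\t)\bigr)$. Now $(\cF^0_{k_0}(\u(\t)):\cF^1_{k_0}(\u(\t)))=\psi(\phi(\t))=(t_0:t_1)$ because $\psi$ is an inverse of $\phi$ on a dense open set, so up to a common scalar factor $\lambda(\t)$ this equals $(t_0,t_1)$, and hence $\sigma(G)(\u(\t))=\lambda(\t)^{\deg_{\T}G}\,G(t_0,t_1,\u(\t))=0$ again by \eqref{oso}. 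Therefore $\sigma(G)\in\K[\X]$ vanishes on $\C$, and since $\cE_d$ is the irreducible defining equation of $\C$ and generates $\kK\cap\K[\X]$, we conclude $\cE_d\mid\sigma(G)$. The converse direction is the one requiring a little more care and is where I expect the main obstacle: given $G$ with $\sigma(G)=\cE_d\cdot H$ for some $H\in\K[\X]$, I want $G\in\kK$, i.e. $G(\T,\u(\T))=0$. The idea is that $G(\T,\X)-T_1^{?}\cdots$ — more precisely, using the syzygy $T_0\cF^1_{k_0}-T_1\cF^0_{k_0}\in\kK$, one can rewrite $G(\T,\X)$ modulo $\kK$ in a normal form where the $\T$-variables have been "replaced" by $\cF^i_{k_0}(\X)$: concretely, work in $\K[\T,\X]/\kK\cong\mathrm{Rees}(I)$ and note that the relation $u_0(\T)\cF^1_{k_0}(\X)Z - u_1(\T)\cdots$ coming from $\phi\circ\psi=\mathrm{id}$ lets one eliminate. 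Then $G\equiv (\text{something})\cdot$ image of $\sigma(G)$, and since $\cE_d$ is already in $\kK$, the multiple $\cE_d\cdot H$ dies, giving $G\in\kK$.

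The cleanest way to finish the converse, and the step I would spend the most effort on, is the following. Let $\mathfrak{a}:=\langle T_0\cF^1_{k_0}(\X)-T_1\cF^0_{k_0}(\X)\rangle+\langle\cE_d(\X)\rangle\subseteq\kK$. I claim that modulo $\mathfrak{a}$ every bihomogeneous $G(\T,\X)$ is congruent to a polynomial purely in $\X$ (reduce the $\T$-degree one step at a time using the generator $T_0\cF^1_{k_0}-T_1\cF^0_{k_0}$ to clear all monomials except pure powers of one $T_i$, then use that $\cF^0_{k_0},\cF^1_{k_0}$ have no common factor other than what is controlled by $\cE_d$ on $\C$ — this is exactly the content of the "singularities contained in the common zeros" part of Proposition 2.4). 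Applying $\sigma$ to such a reduction and using $\sigma(\cE_d)=\cE_d$ and $\sigma(T_0\cF^1_{k_0}-T_1\cF^0_{k_0})=0$, one sees the pure-$\X$ representative of $G$ modulo $\mathfrak{a}$ is congruent to $\sigma(G)$ modulo $\langle\cE_d\rangle$; hence $G\in\mathfrak{a}+\langle\sigma(G)\rangle\subseteq\kK+\langle\cE_d H\rangle=\kK$. The only genuinely delicate point is justifying the "reduce to pure $\X$" claim cleanly in the graded/bihomogeneous setting without introducing spurious denominators, and making sure the degree bookkeeping (powers of $t_0,t_1$ absorbed as $\lambda(\t)$ above, and the analogous homogenization on the $\X$-side) is consistent; everything else is a direct consequence of Propositions 2.4 and the characterization \eqref{oso}.
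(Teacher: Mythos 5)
The paper does not actually prove this proposition: it is quoted from \cite[Proposition 4.1]{CD13}, so I am assessing your argument on its own terms. Your forward direction is correct and is the standard one. From the hypothesis $t_0\cF^1_{k_0}(\u(\t))-t_1\cF^0_{k_0}(\u(\t))=0$ one gets $\cF^i_{k_0}(\u(\t))=\lambda(\t)\,t_i$ for a homogeneous $\lambda(\t)\neq 0$, hence
$G_{i,j}\bigl(\cF^0_{k_0}(\X),\cF^1_{k_0}(\X),\X\bigr)\big|_{\X\mapsto\u(\t)}=\lambda(\t)^{i}\,G_{i,j}(\t,\u(\t))$, and if $G_{i,j}\in\kK$ this vanishes, so the substituted polynomial lies in $\kK\cap\K[\X]=\langle\cE_d(\X)\rangle$. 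Fine.

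Your converse, however, contains a genuine gap. The claim that every bihomogeneous $G$ is congruent modulo $\mathfrak{a}=\langle T_0\cF^1_{k_0}(\X)-T_1\cF^0_{k_0}(\X),\,\cE_d(\X)\rangle$ to a polynomial purely in $\X$ cannot hold: $\mathfrak{a}$ is generated by bihomogeneous elements, so congruence modulo $\mathfrak{a}$ preserves bidegree components, and a form of $\T$-degree $i>0$ is congruent to a form of $\T$-degree $0$ only if each lies in $\mathfrak{a}$ separately. Your reduction would therefore prove that every form of positive $\T$-degree belongs to $\mathfrak{a}$, which is absurd. The relation $T_0\cF^1_{k_0}\equiv T_1\cF^0_{k_0}$ only trades monomials at the cost of extra factors of $\cF^j_{k_0}(\X)$; what it really yields is $\cF^1_{k_0}(\X)^{i}\,G_{i,j}(\T,\X)\equiv T_1^{i}\,G_{i,j}\bigl(\cF^0_{k_0}(\X),\cF^1_{k_0}(\X),\X\bigr)\ (\mathrm{mod}\ \mathfrak{a})$, from which one can conclude $\cF^1_{k_0}(\X)^{i}G_{i,j}\in\kK$ and then $G_{i,j}\in\kK$ because $\kK$ is prime and $\cF^1_{k_0}\notin\kK$. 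But the shortest repair is already contained in your own forward step: substituting $\X\mapsto\u(\t)$ into $G_{i,j}\bigl(\cF^0_{k_0}(\X),\cF^1_{k_0}(\X),\X\bigr)=\cE_d(\X)H(\X)$ gives $\lambda(\t)^{i}\,G_{i,j}(\t,\u(\t))=\cE_d(\u(\t))H(\u(\t))=0$, and since $\lambda\neq 0$ and $\K[\T]$ is a domain, $G_{i,j}(\t,\u(\t))=0$, i.e.\ $G_{i,j}\in\kK$ by \eqref{oso}. No normal form, and no ring homomorphism $\sigma$, is needed for either direction.
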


\begin{theorem}(\cite[Theorem $4.6$]{CD13})\label{mu}
Let $u_0(\T),\,u_1(\T),\,u_2(\T)\in\K[\T]$ be homogeneous polynomials of
degree $d$ having no common factors. A minimal set of
generators of $\kK$ can be found with all its elements having
$\T$-degree  strictly less than $d-\mu$ except for the
generators of $\kK_{\ast,1}$ with $\T$-degree $d-\mu$.
\end{theorem}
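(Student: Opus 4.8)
The plan is to prove the equivalent statement that $\kK$ is generated, as an ideal of $\K[\T,\X]$, by $P_{\mu,1}$, $Q_{d-\mu,1}$ and all of its bihomogeneous elements of $\T$-degree $\le d-\mu-1$; call $\mathcal{J}$ the ideal these generate. Minimality of the resulting generating set is then automatic: once $\kK$ is generated by elements of $\T$-degree $\le d-\mu$, every minimal generator has $\T$-degree $\le d-\mu$ (an element of strictly larger $\T$-degree is a bihomogeneous combination of the generators with coefficients in $(T_0,T_1)$, hence redundant), and a short check in bidegree $(d-\mu,\ast)$ shows that the only minimal generators surviving at $\T$-degree exactly $d-\mu$ are those of $\kK_{\ast,1}$. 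Since $\mathcal{J}$ and $\kK$ are bihomogeneous it suffices to prove $\mathcal{J}_{i,j}=\kK_{i,j}$ in every bidegree $(i,j)$; by construction this holds for $i\le d-\mu-1$, it holds for $j=0$ since $\cE_d(\X)$ has $\T$-degree $0$, and it holds for $j=1$ since $\kK_{\ast,1}=\mathrm{Syz}(I)=R\,P_{\mu,1}\oplus R\,Q_{d-\mu,1}$. So everything is concentrated in the range $i\ge d-\mu$, $j\ge 2$.

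The first ingredient is a descent step in the $\T$-variables. Let $G\in\kK_{i,j}$ with $i\ge d-\mu$ (note $d-\mu\ge 1$). Splitting each coefficient, write $G=T_0G_0+T_1G_1$ with $G_0,G_1$ bihomogeneous of bidegree $(i-1,j)$. These need not lie in $\kK$, but from $0=G(\T,\u(\T))=T_0G_0(\T,\u)+T_1G_1(\T,\u)$ and the fact that $T_0,T_1$ is a regular sequence in $R$ one gets a single polynomial $W=W(\T)\in R$ of degree $\delta:=i+jd-2$ with $G_0(\T,\u)=T_1W$ and $G_1(\T,\u)=-T_0W$. Since $T_0W,T_1W\in I^j$, $W$ represents a class in $\mathrm{soc}(R/I^j)_\delta$. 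If that class vanishes, i.e. $W\in(I^j)_\delta$, write $-W=\sum_{|\alpha|=j}e_\alpha(\T)\u(\T)^\alpha$ with $\deg e_\alpha=i-2$ and put $E=\sum e_\alpha\X^\alpha$; then $G_0+T_1E$ and $G_1-T_0E$ both lie in $\kK_{i-1,j}$ and satisfy $T_0(G_0+T_1E)+T_1(G_1-T_0E)=G$, so $G\in T_0\kK_{i-1,j}+T_1\kK_{i-1,j}$. Thus the only obstruction to lowering the $\T$-degree of $G$ is the socle class of $W$.

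Two facts then close the argument. The first is the regularity bound $\mathrm{reg}(I^j)\le jd+(d-\mu)-1$ for all $j\ge 1$; for $j=1$ it is read off the Hilbert--Burch resolution \eqref{sec}, and for the powers it follows from the determinantal (Hilbert--Burch) structure of $I$. Equivalently $(R/I^j)_\delta=0$ as soon as $\delta\ge jd+(d-\mu)-1$, that is, as soon as $i\ge d-\mu+1$; so for $i\ge d-\mu+1$ the obstruction above automatically vanishes, and iterating the descent expresses any $\kK_{i,j}$ with $i\ge d-\mu+1$, $j\ge2$, inside $(T_0,T_1)^{\,i-(d-\mu)}\kK_{d-\mu,j}$. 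It remains to handle $\kK_{d-\mu,j}$, where $\delta=jd+(d-\mu)-2=\mathrm{reg}(R/I^j)$ is exactly the top degree and the socle may be nonzero; here $Q_{d-\mu,1}$ enters. Write $Q_{d-\mu,1}=T_0Q'+T_1Q''$ with $Q',Q''$ of bidegree $(d-\mu-1,1)$; the syzygy $\sum_k q_k(\T)u_k(\T)=0$ forces $Q'(\T,\u)=T_1\omega$, $Q''(\T,\u)=-T_0\omega$ for some $\omega\in R$ of degree $2d-\mu-2$, and one checks $\omega\notin I$ (else $Q_{d-\mu,1}$ would be an $R$-multiple of $P_{\mu,1}$, against the freeness of $\mathrm{Syz}(I)$). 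The second fact is: the top socle $\mathrm{soc}(R/I^j)_{\,jd+(d-\mu)-2}$ is spanned by the classes of $\{\,\omega\,\u(\T)^\beta:|\beta|=j-1\,\}$. Granting it, for $G\in\kK_{d-\mu,j}$ with obstruction $W$ we get $W\equiv\sum_\beta c_\beta\,\omega\,\u^\beta\pmod{I^j}$ for scalars $c_\beta$; the element $\sum_\beta c_\beta\,Q_{d-\mu,1}\X^\beta\in\langle Q_{d-\mu,1}\rangle$ has obstruction $\sum_\beta c_\beta\,\omega\,\u^\beta$ with respect to the same splitting, so $G-\sum_\beta c_\beta\,Q_{d-\mu,1}\X^\beta$ has obstruction in $I^j$ and, by the fix-up above, lands in $T_0\kK_{d-\mu-1,j}+T_1\kK_{d-\mu-1,j}\subseteq\mathcal{J}$. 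Hence $\kK_{d-\mu,j}\subseteq\mathcal{J}$, and together with the descent this gives $\kK_{i,j}\subseteq\mathcal{J}$ for all $i\ge d-\mu$, $j\ge 2$, completing the proof.

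The main obstacle is the socle statement used at $i=d-\mu$: identifying the generators of $\mathrm{soc}(R/I^j)$ in top degree — equivalently, the last graded Betti numbers of $R/I^j$ — requires genuine structural input on the powers of the determinantal ideal $I$, not just on $I$ itself. One convenient route is to note that $\mathrm{Sym}(I)=\K[\T,\X]/(P_{\mu,1},Q_{d-\mu,1})$ is a complete intersection (because $P_{\mu,1}$ is irreducible, the gcd of its coefficients being $1$ by minimality of the $\mu$-basis, and $Q_{d-\mu,1}$ is a nonzerodivisor modulo it) and to combine this with the identification $\kK/(P_{\mu,1},Q_{d-\mu,1})=H^0_{\mathfrak m}(\mathrm{Sym}(I))$, $\mathfrak m=(T_0,T_1)$; the Koszul resolution of $\mathrm{Sym}(I)$ then realizes $H^0_{\mathfrak m}(\mathrm{Sym}(I))$ as the kernel of an explicit Sylvester--Bezout map between shifts of $H^2_{\mathfrak m}(\K[\T,\X])$, from which both the generating degrees and the socle description above can be extracted. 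An alternative is the local cohomology and linkage machinery of \cite{KPU13}. Either way, the determinantal structure is used only at this point; everything else is the $\T$-graded bookkeeping sketched above.
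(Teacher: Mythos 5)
First, a remark on the comparison itself: the paper does not prove this statement. Theorem \ref{mu} is imported verbatim from \cite[Theorem 4.6]{CD13} and used as a black box, so your attempt can only be judged on its own merits.

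Your $\T$-descent skeleton is sound: the splitting $G=T_0G_0+T_1G_1$, the well-definedness of the obstruction class $W$ in $\mathrm{soc}(R/I^j)_{i+jd-2}$ (independent of the splitting modulo $I^j$), the fix-up when $W\in I^j$, the computation of the obstruction of $\X^\beta Q_{d-\mu,1}$, and the extraction of a minimal generating set by graded Nakayama are all correct. But the proof is not complete, because it rests on two structural facts about the powers $I^j$, $j\ge 2$, which are asserted rather than proved, and these are exactly where the content of the theorem sits. (i) The vanishing $(R/I^j)_\delta=0$ for $\delta\ge jd+(d-\mu)-1$ does not ``follow from the determinantal structure'' without further work: general regularity-of-powers bounds only give $\mathrm{reg}(I^j)\le j\,\mathrm{reg}(I)=j(2d-\mu-1)$, far too weak, and the sharp bound really requires the free $R$-resolution of $\mathrm{Sym}(I)_j$ obtained from the $\X$-degree-$j$ strand of the Koszul complex of the $\mu$-basis --- i.e.\ precisely the machinery you defer to your final paragraph. (ii) The socle statement, which you explicitly grant, is the crux; worse, as stated it is \emph{false} in the boundary case $2\mu=d$. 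Take $I=\mathfrak{m}^2\subset\K[T_0,T_1]$ ($d=2$, $\mu=1$, $\omega=-T_1$): then $\mathrm{soc}(R/I^j)_{2j-1}=R_{2j-1}$ has dimension $2j$, while your proposed spanning set $\{\omega\,\u^\beta:|\beta|=j-1\}$ spans $T_1R_{2j-2}$, of dimension $2j-1$; the missing class $T_0^{2j-1}$ is the obstruction of $X_0^{j-1}P_{1,1}$. In general the obstructions of $\X^\gamma P_{\mu,1}$ land in the top socle degree exactly when $2\mu=d$, so the correction step at $i=d-\mu$ must use both $\mu$-basis elements in that case, and the socle lemma must be restated and actually proved (e.g.\ via the identification $\kK/(P_{\mu,1},Q_{d-\mu,1})\cong H^0_{\mathfrak m}(\mathrm{Sym}(I))$ and Koszul duality, which is essentially the route of \cite{CD13} and \cite{KPU13}). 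As it stands, your text is an honest and useful reduction of the theorem to these two unproven facts, not a proof of it.
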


\subsection{Curves with very singular points}\label{s2}

\begin{definition}
Let $\mu$ be the degree of the first non-trivial syzygy of $I$.
A point ${\bf p}\in\C$ in is said to be {\em very singular} if $mult_P(\C)>\mu.$
\end{definition}

The following result is an extension of \cite[Theorem $1$]{CWL08}.  Recall that we have fixed a basis of the $\K[\X]$-module $\mbox{Syz}(I)$ which we denote with
$\{P_{\mu,1}(\T,\X),\,Q_{d-\mu,1}(\T,\X)\}.$
\begin{proposition}\label{kkk}
A rational plane curve $\C$ can have at most one very singular point. If this is the case, then after a linear change of the $\X$ variables, one can write
\begin{equation}\label{form}
P_{\mu ,1}(\T,\X)=p^1_\mu(\T)X_0-p^0_\mu(\T)X_1.
\end{equation}
Reciprocally, if $2\mu<d$ and after a linear change of $\X$-coordinates $P_{\mu,1}(\T,\X)$ has a form like \eqref{form}, then $\C$ has ${\bf p}=(0:0:1)$ as its only very singular point.
\end{proposition}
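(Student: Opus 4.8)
The plan is to read the multiplicity of a point ${\bf p}=(a_0:a_1:a_2)$ of $\C$ off the way the first $\mu$-basis element degenerates there (throughout I assume $d\geq 2$, i.e., under the standing properness hypothesis, $\mu\geq 1$). Write $P_{\mu ,1}(\T,\X)=f_0(\T)X_0+f_1(\T)X_1+f_2(\T)X_2$ with $f_i\in\K[\T]_\mu$, so that $(f_0,f_1,f_2)$ is the column of the Hilbert-Burch matrix $\varphi$ of \eqref{sec} corresponding to $P_{\mu ,1}$; substituting $\X={\bf p}$ gives the form $P_{\mu ,1}(\T,{\bf p})=a_0f_0(\T)+a_1f_1(\T)+a_2f_2(\T)\in\K[\T]_\mu$, and likewise $Q_{d-\mu ,1}(\T,{\bf p})\in\K[\T]_{d-\mu}$. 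The key input I would use is the $\mu$-basis reading of singularities behind \cite[Theorem $1$]{CWL08}: since $u_0,u_1,u_2$ have no common zero on $\P^1$, the matrix $\varphi$ has rank $2$ at every point, so for $\xi\in\P^1$ Cramer's rule identifies ``$\u(\xi)$ is proportional to ${\bf p}$'' with ``both columns of $\varphi(\xi)$ are orthogonal to ${\bf p}$'', i.e. with ``$\xi$ is a common zero of $P_{\mu ,1}(\T,{\bf p})$ and $Q_{d-\mu ,1}(\T,{\bf p})$''; as $\phi$ is proper, comparing the resulting divisors on $\P^1$ gives
\[
\mathrm{mult}_{\bf p}(\C)=\deg_\T\gcd(P_{\mu ,1}(\T,{\bf p}),\,Q_{d-\mu ,1}(\T,{\bf p})).
\]
If $P_{\mu ,1}(\T,{\bf p})$ is not the zero form, this forces $\mathrm{mult}_{\bf p}(\C)\le\mu$; hence \emph{every very singular point ${\bf p}$ lies in the kernel of the linear map} $L\colon\K^{3}\to\K[\T]_{\mu}$, $(a_0,a_1,a_2)\mapsto a_0f_0(\T)+a_1f_1(\T)+a_2f_2(\T)$.

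Next I would check $\dim_\K\ker L\le 1$, which already yields the first assertion. If $\ker L$ had dimension at least $2$, complete two of its independent vectors to a basis of $\K^3$; in the associated $\X$-coordinates the transformed $P_{\mu ,1}$ involves only $X_2$, since its $X_0$- and $X_1$-coefficients are the values of $L$ on the two chosen vectors, hence $0$. Writing $P_{\mu ,1}=h(\T)X_2$ and using that it is a syzygy, $h$ times the third component of the reparametrized triple vanishes; that component is a nonzero form (otherwise the curve lies in a line and $d=1$), so $h=0$ and $P_{\mu ,1}=0$, which is impossible. Thus $\C$ has at most one very singular point. If it has one, say ${\bf p}$, then ${\bf p}$ is the unique point of $\P^2$ annihilated by $L$; the condition $L({\bf p})=0$ is coordinate-independent, and in coordinates where ${\bf p}=(0:0:1)$ it says precisely that the $X_2$-coefficient of $P_{\mu ,1}$ is zero, i.e. $P_{\mu ,1}(\T,\X)=f_0(\T)X_0+f_1(\T)X_1$, which is \eqref{form} after relabeling $f_0,f_1$ as $p^1_\mu,-p^0_\mu$.

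For the converse, assume $2\mu<d$ and that, after a linear change, $P_{\mu ,1}(\T,\X)=p^1_\mu(\T)X_0-p^0_\mu(\T)X_1$; then the column of $\varphi$ attached to $P_{\mu ,1}$ has vanishing third entry. Let $g_2$ be the third entry of the other column. Then $g_2\not\equiv0$, for otherwise the whole third row of $\varphi$ vanishes and the two maximal minors of $\varphi$ involving it --- two of $u_0,u_1,u_2$ --- are zero, a contradiction. Now $P_{\mu ,1}(\T,(0:0:1))\equiv0$ while $Q_{d-\mu ,1}(\T,(0:0:1))=g_2(\T)$, so the displayed formula gives $\mathrm{mult}_{(0:0:1)}(\C)=\deg_\T g_2=d-\mu>\mu$, the last inequality being exactly $2\mu<d$. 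In particular $(0:0:1)\in\C$ and is very singular, and by the first part it is the only very singular point of $\C$.

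The step I expect to be the crux is the first one: establishing --- or correctly invoking --- the dictionary $\mathrm{mult}_{\bf p}(\C)=\deg_\T\gcd(P_{\mu ,1}(\T,{\bf p}),Q_{d-\mu ,1}(\T,{\bf p}))$, and in particular verifying that no multiplicity is lost when one passes from the scheme-theoretic fibre of $\phi$ over ${\bf p}$ to the common zero scheme of the two evaluated $\mu$-basis forms; this rests on $\varphi$ having full rank at every point of $\P^1$ together with properness. Everything after that is elementary linear algebra with the coefficient forms $f_i$, together with the Hilbert-Burch presentation \eqref{sec} of $I$.
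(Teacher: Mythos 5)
Your proof is correct and follows essentially the same route as the paper: the uniqueness of the very singular point and the normal form \eqref{form} rest on the multiplicity-via-$\mu$-basis dictionary of \cite[Theorem 1]{CWL08} (which you flesh out with the kernel-of-$L$ linear algebra but ultimately invoke for the key identity $\mathrm{mult}_{\bf p}(\C)=\deg\gcd(P_{\mu,1}(\T,{\bf p}),Q_{d-\mu,1}(\T,{\bf p}))$, just as the paper cites it), and the converse is in both cases the observation that the fibre of $\phi$ over $(0:0:1)$ has degree $d-\mu>\mu$. Your evaluation $Q_{d-\mu,1}(\T,(0:0:1))=g_2(\T)$ is the same computation as the paper's factorization $u_0=p^0_\mu q,\ u_1=p^1_\mu q$ read off the Hilbert--Burch minors, since $q=\pm g_2$.
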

\begin{proof}
The first part of the claim follows directly from \cite[Theorem 1]{CWL08}. For the converse, note that if $P_{\mu,1}(\T,\X)$ is like \eqref{form}, then by computing $\u(\T)$ from the Hilbert Burch matrix appearing in \eqref{sec}, we will have
\begin{equation}\label{riso}
\begin{array}{ccc}
u_0(\T)&=&p^0_\mu(\T)\,q(\T),\\
u_1(\T)&=&p^1_\mu(\T)\,q(\T),
\end{array}
\end{equation}
for a homogeneous polynomial $q(\T)\in\K[\T]$ of degree $d-\mu>\mu.$ Hence, the preimage of the point $P=(0:0:1)$ has $d-\mu$ values counted with multiplicities (the zeroes of $q(\T)$), and so we get
$mult_P(\C)>\mu.$
\end{proof}
\smallskip
\begin{remark}
Note that if $\C$ has $(0:0:1)$ as a very singular point, then 
\begin{equation}\label{cucco}
\cE_d(\X)=\cE^0_{d}(X_0,X_1)+\cE^1_{d-1}(X_0,X_1)\,X_2+\ldots +\cE^\mu_{d-\mu}(X_0,X_1)\,X_2^\mu,
\end{equation}
with $\cE^i_{d-i}(X_0,X_1)\in\K[X_0,X_1],$ homogeneous of degree $i,$ and $\cE^\mu_{d-\mu}(X_0,X_1)\neq0.$
\end{remark}
The syzygy $P_{\mu,1}(\T,\X)$ in \eqref{form} is called an {\em axial moving line} around $P$ in
\cite{CWL08}. The following result is well-known, and will be used in the sequel
\begin{proposition}\label{lem2}
Let $a_{s_0}(\T),\,b_{s_0}(\T)\in\K[\T]$ homogeneous of the same degree $s_0$ without common factors. Then, $\left( a_{s_0}(\T),\,b_{s_0}(\T)\right)_s=\K[\T]_s$ for $s\geq2s_0-1.$
\end{proposition}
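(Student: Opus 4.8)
The plan is to compare Hilbert functions. First I would use that $\K[\T]=\K[T_0,T_1]$ is a unique factorization domain: since $a_{s_0}(\T)$ and $b_{s_0}(\T)$ have no common factor, the module of syzygies of the pair $\bigl(a_{s_0}(\T),b_{s_0}(\T)\bigr)$ is generated by the Koszul syzygy $\bigl(-b_{s_0}(\T),a_{s_0}(\T)\bigr)$, which sits in $\T$-degree $s_0$. Indeed, from $a_{s_0}f+b_{s_0}g=0$ one deduces $a_{s_0}\mid g$ and $b_{s_0}\mid f$, hence $(f,g)=h\cdot\bigl(-b_{s_0},a_{s_0}\bigr)$ for some homogeneous $h\in\K[\T]$. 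This yields a graded exact sequence
\[
0\longrightarrow \K[\T](-2s_0)\longrightarrow \K[\T](-s_0)^2\longrightarrow \langle a_{s_0}(\T),b_{s_0}(\T)\rangle\longrightarrow 0,
\]
where the first map is multiplication by $\bigl(-b_{s_0}(\T),a_{s_0}(\T)\bigr)^{t}$ and the second is $(f,g)\mapsto a_{s_0}f+b_{s_0}g$.

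Next I would take the degree-$s$ piece of this sequence and compute dimensions over $\K$, using $\dim_\K\K[\T]_m=m+1$ for $m\geq0$ and $\dim_\K\K[\T]_m=0$ for $m<0$. This gives
\[
\dim_\K\langle a_{s_0}(\T),b_{s_0}(\T)\rangle_s=2\,\dim_\K\K[\T]_{s-s_0}-\dim_\K\K[\T]_{s-2s_0}.
\]
For $s=2s_0-1$ the subtracted term vanishes and the right-hand side equals $2\bigl((s_0-1)+1\bigr)=2s_0=s+1$; for $s\geq 2s_0$ all terms are in the nonnegative range and the right-hand side equals $2(s-s_0+1)-(s-2s_0+1)=s+1$. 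In either case this equals $\dim_\K\K[\T]_s=s+1$, and since $\langle a_{s_0}(\T),b_{s_0}(\T)\rangle_s\subseteq\K[\T]_s$, the two spaces coincide, which is the assertion.

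I do not expect a real obstacle: the content is entirely in the exactness of the displayed complex, i.e. in the identification of $\mathrm{Syz}\bigl(a_{s_0},b_{s_0}\bigr)$ with the Koszul syzygy, and that rests solely on the coprimality hypothesis together with the UFD property of $\K[T_0,T_1]$; the remainder is a dimension count. (An alternative, more geometric route would note that $a_{s_0}(\T),b_{s_0}(\T)$ cut out the empty subscheme of $\P^1$, so the ideal they generate agrees in high degrees with its saturation $\K[\T]$; the same Koszul resolution then identifies the precise threshold $s\geq 2s_0-1$ from which this agreement holds.)
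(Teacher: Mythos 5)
Your proof is correct, but it takes a genuinely different route from the paper's. The paper argues via the Sylvester resultant: coprimality makes $\mathrm{Res}_\T\big(a_{s_0},b_{s_0}\big)\neq 0$, and Cramer's rule applied to the Sylvester matrix produces explicit B\'ezout identities
$\tilde{a}^{j}_{s_0-1}a_{s_0}+\tilde{b}^j_{s_0-1}b_{s_0}=\mathrm{Res}_\T\big(a_{s_0},b_{s_0}\big)\,T_0^{j}T_1^{2s_0-1-j}$
for $j=0,\dots,2s_0-1$, exhibiting every monomial of degree $2s_0-1$ (up to the nonzero resultant) as an element of the ideal; higher degrees follow by multiplying by monomials. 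You instead identify $\mathrm{Syz}(a_{s_0},b_{s_0})$ with the Koszul syzygy (correctly, using the UFD property of $\K[T_0,T_1]$ and coprimality), obtain the graded Koszul resolution, and conclude by a Hilbert function count; your arithmetic checks out in both the boundary case $s=2s_0-1$ (where the twist by $-2s_0$ contributes nothing) and the range $s\geq 2s_0$. The coprimality hypothesis enters in both arguments at the decisive point --- nonvanishing of the resultant for the paper, exactness of the Koszul complex for you. What the paper's approach buys is an explicit, constructive certificate (the cofactors $\tilde{a}^j,\tilde{b}^j$) that could be extracted algorithmically; what yours buys is a cleaner conceptual explanation of why $2s_0-1$ is exactly the right threshold, namely that it is the largest degree in which the syzygy module is still zero, so the two generators impose independent conditions for the last time there.
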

\begin{proof}
By hypothesis, the classical Sylvester resultant of $a_{s_0}(\T)$ and $b_{s_0}(\T)$ (for its definition, see for instance \cite{CLO07}) is not zero, and moreover from the Sylvester matrix which computes this resultant, we can get a B\'ezout identity of the form
$$\tilde{a}^{j}_{s_0-1}(\T)a_{s_0}(\T)+\tilde{b}^j_{s_0-1}(\T)b_{s_0}(\T)=Res_{\T}\big(a_{s_0}(\T), b_{s_0}(\T)\big)\,T_0^{j}T_1^{2s_0-1-j}
$$
for $j=0, 1, \ldots, 2s_0-1.$ This shows that $\left( a_{s_0}(\T),\,b_{s_0}(\T)\right)_{2s_0-1}=\K[\T]_{2s_0-1},$ and the rest of the claim follows straightforwardly from here.
\end{proof}

Several of the proofs in this text will be done by induction on degrees. In order to be able to pass from one degree to another, we will apply a pair of operators, one which decreases the degree in $\T$ and another which does it with $\X$. Recall from \eqref{form} that we have $P_{\mu ,1}(\T,\X)=p^1_\mu(\T)X_0-p^0_\mu(\T)X_1.$
\begin{definition}\label{DT}
If $G_{i,j}(\T,\X)\in\K[\T,\X]_{i,j},$ with $i\geq2\mu-1,$ then write
$$G_{i,j}(\T,\X)=p^0_\mu(\T)G^0_{i-\mu,j}(\T,\X)+p^1_\mu(\T)G^1_{i-\mu,j}(\T,\X),
$$
and set
$$\cD_T\big(G_{i,j}(\T,\X)\big):=X_0G^0_{i-\mu,j}(\T,\X)+X_1G^1_{i-\mu,j}(\T,\X)\in\K[\T,\X]_{i-\mu,j+1}.
$$
If $G_{i,j}(\T,\X)\in\K[\T,\X]_{i,j}\cap\langle X_0,\,X_1\rangle,$ then write
$$G_{i,j}(\T,\X)=X_0G^0_{i,j-1}(\T,\X)+X_1G^1_{i,j-1}(\T,\X),
$$
and set
$$\cD_X\big(G_{i,j}(\T,\X)\big):=p^0_\mu(\T)G^0_{i,j-1}(\T,\X)+p^\mu_1(\T)G^1_{i,j-1}(\T,\X)\in\K[\T,\X]_{i+\mu,j-1}.
$$
\end{definition}
Note that both operators are in principle not well defined as the decomposition of $G_{i,j}(\T,\X)$ given above is not necessarily unique. In the next proposition we show that it is actually well defined modulo $P_{\mu,1}(\T,\X).$
\begin{proposition}\label{tec1}
both $\cD_T(G_{i,j}(\T,\X))$ and $\cD_X(G_{i,j}(\T,\X))$ are well defined modulo $P_{\mu,1}(\T,\X).$ Moreover, the image of $\cD_\T$ lies in the ideal $\langle X_0,\,X_1\rangle,$ and
\begin{equation}\label{ident}
\cD_X\left(\cD_T\big(G_{i,j}(\T,\X)\big)\right)=G_{i,j}(\T,\X)\ \mod P_{\mu,1}(\T,\X).\end{equation}
In addition, if $G_{i,j}(\T,\X)\in\kK,$ then both $D_T(G_{i,j}(\T,\X))$ and $D_X(G_{i,j}(\T,\X)),$ when defined, are also elements of $\kK.$
\end{proposition}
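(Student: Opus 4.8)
### Proof strategy for Proposition \ref{tec1}

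The plan is to verify the four assertions in order, since each relies on the previous ones. First I would establish well-definedness of $\cD_T$ modulo $P_{\mu,1}$. Suppose $G_{i,j}=p^0_\mu G^0_{i-\mu,j}+p^1_\mu G^1_{i-\mu,j}=p^0_\mu \tilde G^0_{i-\mu,j}+p^1_\mu \tilde G^1_{i-\mu,j}$ are two admissible decompositions. Then $p^0_\mu(G^0-\tilde G^0)=-p^1_\mu(G^1-\tilde G^1)$, and since $p^0_\mu,p^1_\mu$ are homogeneous of the same degree $\mu$ without common factors (they come from the $\mu$-basis column in \eqref{form}, and if they had a common factor the syzygy would not be minimal), we get $G^0-\tilde G^0=p^1_\mu H$ and $G^1-\tilde G^1=-p^0_\mu H$ for some $H=H_{i-2\mu,j}(\T,\X)$; this is where the hypothesis $i\geq 2\mu-1$ is used — actually for the factorization to force a polynomial $H$ we need the degree bookkeeping to work, and one checks $H$ has $\T$-degree $i-2\mu\geq -1$, so either $H=0$ (if $i=2\mu-1$) or $H$ is a genuine polynomial. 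Then the difference of the two candidate values of $\cD_T$ is $X_0 p^1_\mu H - X_1 p^0_\mu H = -H\cdot P_{\mu,1}(\T,\X)$, which is $0$ modulo $P_{\mu,1}$. The same argument, mutatis mutandis, handles $\cD_X$: if $G_{i,j}=X_0G^0+X_1G^1=X_0\tilde G^0+X_1\tilde G^1$ then $X_0(G^0-\tilde G^0)=-X_1(G^1-\tilde G^1)$, so $G^0-\tilde G^0=X_1 H$, $G^1-\tilde G^1=-X_0 H$, and the difference of the two values of $\cD_X$ is $p^0_\mu X_1 H-p^1_\mu X_0 H=-H\cdot P_{\mu,1}$, again zero modulo $P_{\mu,1}$.

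The statement that the image of $\cD_T$ lies in $\langle X_0,X_1\rangle$ is immediate from the formula $\cD_T(G_{i,j})=X_0G^0_{i-\mu,j}+X_1G^1_{i-\mu,j}$. For the identity \eqref{ident}: starting from $G_{i,j}=p^0_\mu G^0+p^1_\mu G^1$, apply $\cD_T$ to get $X_0G^0+X_1G^1$, which lies in $\langle X_0,X_1\rangle$, so $\cD_X$ is defined on it; the decomposition $X_0G^0+X_1G^1$ with respect to $\langle X_0,X_1\rangle$ is the obvious one, and $\cD_X$ of it is $p^0_\mu G^0+p^1_\mu G^1=G_{i,j}$ exactly (on the nose, for this particular choice of decomposition), hence $=G_{i,j}$ modulo $P_{\mu,1}$ for any choice, by the well-definedness just proved.

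Finally, for the $\kK$-stability: suppose $G_{i,j}(\T,\X)\in\kK$, so by \eqref{oso} we have $G_{i,j}(\T,\u(\T))=0$. I would specialize the defining relations. Recall from the Hilbert–Burch setup \eqref{sec}–\eqref{form} (as in the proof of Proposition \ref{kkk}) that $P_{\mu,1}(\T,\u(\T))=p^1_\mu(\T)u_0(\T)-p^0_\mu(\T)u_1(\T)=0$, i.e. $u_0(\T)=p^0_\mu(\T)q(\T)$ and $u_1(\T)=p^1_\mu(\T)q(\T)$ for $q(\T)$ of degree $d-\mu$. Writing $G_{i,j}=p^0_\mu G^0+p^1_\mu G^1$ and evaluating at $\X=\u(\T)$: $0=G_{i,j}(\T,\u(\T))=p^0_\mu(\T)G^0(\T,\u(\T))+p^1_\mu(\T)G^1(\T,\u(\T))$. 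Then $\cD_T(G_{i,j})$ evaluated at $\u(\T)$ is $u_0(\T)G^0(\T,\u(\T))+u_1(\T)G^1(\T,\u(\T))=q(\T)\big(p^0_\mu(\T)G^0(\T,\u(\T))+p^1_\mu(\T)G^1(\T,\u(\T))\big)=q(\T)\cdot 0=0$, so $\cD_T(G_{i,j})\in\kK$. For $\cD_X$: writing $G_{i,j}=X_0G^0+X_1G^1$ with $G_{i,j}\in\kK$ gives $0=u_0(\T)G^0(\T,\u(\T))+u_1(\T)G^1(\T,\u(\T))=q(\T)\big(p^0_\mu G^0(\T,\u(\T))+p^1_\mu G^1(\T,\u(\T))\big)$; since $q(\T)\neq 0$ and $\K[\T]$ is a domain, the second factor vanishes, which says precisely $\cD_X(G_{i,j})(\T,\u(\T))=0$, so $\cD_X(G_{i,j})\in\kK$. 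The main obstacle I anticipate is not any single computation but getting the degree bookkeeping and the ``no common factor'' hypothesis on $p^0_\mu,p^1_\mu$ exactly right so that the cancellation arguments produce honest polynomials $H$ (rather than rational functions) — everything else is a short specialization argument using the relations \eqref{riso}.
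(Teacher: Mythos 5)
Your proof is correct and follows essentially the same route as the paper: well-definedness via the coprimality of $p^0_\mu,p^1_\mu$ (resp.\ $X_0,X_1$) forcing the difference of two decompositions to be a multiple of $P_{\mu,1}(\T,\X)$, and $\kK$-stability via specialization $\X\mapsto\u(\T)$ using \eqref{riso}. Your explicit justification that $p^0_\mu,p^1_\mu$ share no common factor (by minimality of $\mu$) is a detail the paper leaves implicit, and the stray sign in $X_0p^1_\mu H-X_1p^0_\mu H=HP_{\mu,1}$ is harmless modulo $P_{\mu,1}$.
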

\begin{proof}
Consider first $\cD_\T,$ so it is enough to show that if $i\geq2\mu-1$ and
\begin{equation}\label{hhuno}
p^0_\mu(\T)G^0_{i-\mu,j}(\T,\X)+p^1_\mu(\T)G^1_{i-\mu,j}(\T,\X)=0,
\end{equation} then
$X_0G^0_{i-\mu,j}(\T,\X)+X_1G^1_{i-\mu,j}(\T,\X)$ is a multiple of $P_{\mu,1}(\T,\X).$ But from \eqref{hhuno}, we get
$$\begin{array}{ccl}
G^0_{i-\mu,j}(\T,\X)&=&p^1_{\mu}(\T)H_{i-2\mu,j}(\T,\X)\\
G^1_{i-\mu,j}(\T,\X)&=&-p^0_{\mu}(\T)H_{i-2\mu,j}(\T,\X),
\end{array}
$$
with $H_{i-2\mu,j}(\T,\X)\in\K[\T,\X],$ and hence $$X_0G^0_{i-\mu,j}(\T,\X)+X_1G^1_{i-\mu,j}(\T,\X)=P_{\mu,1}(\T,\X)\,H(\T,\X).$$
The proof of the claim for $\cD_X$ and for the composition $\cD_X\circ\cD_T$ follows analogously.
\par
To conclude, suppose $G_{i,j}(\T,\X)\in\kK$ with $i\geq2\mu-1.$ Due to \eqref{oso}, this is equivalent to having
$$G_{i,j}(\T,\u(\T))=p^0_\mu(\T)G^0_{i-\mu,j}(\T,\u(\T))+p^1_\mu(\T)G^1_{i-\mu,j}(\T,\u(\T))=0.
$$
From here, by using \eqref{riso}, we get immediately that
$$
\left.\cD_T\big(G_{i,j}(\X,\T)\big)\right|_{\X\mapsto\u(\T)}=q(\T)\left(p^0_\mu(\T)G^0_{i-\mu,j}(\T,\u(\T))+p^1_\mu(\T)G^1_{i-\mu,j}(\T,\u(\T))\right)=0,
$$
which shows that $\cD_T\big(G_{i,j}(\X,\T)\big)\in\kK$, again by \eqref{oso}. The proof for $\cD_X\big(G_{i,j}(\T,\X)\big)$ follows analogously.
\end{proof}

\bigskip
\subsection{Elements of low degree in $\kK$}
We will assume here that $\mu<d-\mu,$ and set $d=k\mu+r,$ with $k\in\N$ and $-1\leq r<\mu-1,$ i.e. $k$ and $r$ are the quotient and remainder respectively of the division between $d$ and $\mu$, except in the case when $d+1$ is a multiple of $\mu$.

With this information, we will produce minimal generators of $\mbox{Rees}(I),$ in the case where the curve $\C$ defined by the generators $\u(\T)$ of $I$ has a very singular point, which we will assume w.l.o.g. that it is $P=(0:0:1).$

We start by setting
 $$\boxed{F_{\mu,1}(\T,\X):=P_{\mu,1}(\T,\X),\,F_{(k-1)\mu+r,1}(\T,\X):=Q_{d-\mu,1}(\T,\X),}$$
a basis of the syzygy module of $I$. Note that we have $(k-1)\mu+r=d-\mu.$

Now for $j=2,\ldots, k-1$ we will define recursively $F_{(k-j)\mu+r,j}(\T,\X)\in\kK$ as follows:
\begin{equation}\label{orozco}
\boxed{F_{(k-j)\mu+r,j}(\T,\X)=\cD_T\big(F_{(k-j+1)\mu+r,j-1}(\T,\X)\big).}
\end{equation}
Note that we can apply the operator $\cD_T$ to these polynomials as their $\T$-degree is $(k-j+1)\mu+r\geq2\mu-1.$ Also, we have to make a choice in order to define each of these polynomials, but we know that they are all equivalent modulo $F_{\mu,1}(\T,\X)$ thanks to Proposition \ref{tec1}.

\begin{theorem}\label{mtv} $^{}$
\begin{enumerate}
\item For each $j=1,\ldots, k-1,\, F_{(k-j)\mu+r,j}(\T, \X)\in\kK$ and it is not a multiple of $F_{\mu,1}(\T,\X).$ In particular, it is not identically zero.
\item Up to a nonzero constant in $\K$, we have
$$\mbox{Res}_\T\left(F_{\mu,1}(\T, \X),\,F_{(k-j)\mu+r,j}(\T, \X)\right)=\cE_d(\X), \ \ \ j=1, 2, \ldots, k-1.
$$
\item  If $G_{i,j}(\T,\X)\in\kK_{i,j}$ with $i+\mu\,j<d,$ then $G_{i,j}(\T, \X)$ is a multiple of $F_{\mu,1}(\T,\X).$
\item The set of $k+1$ elements
\begin{equation}\label{family}
\{\cE_d(\X),\,F_{\mu,1}(\T, \X),\,F_{\mu+r,k-1}(\T, \X),\,F_{2\mu+r, k-2}(\T, \X),\,\ldots, F_{d-\mu,1}(\T, \X)\}
\end{equation}
is part of a minimal system of generators of $\kK$.
\end{enumerate}
\end{theorem}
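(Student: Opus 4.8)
The plan is to prove the four assertions essentially in the order they are stated, since each one feeds the next. For part (1), I would argue inductively: $F_{\mu,1}$ and $F_{d-\mu,1}$ are a $\mu$-basis, hence nonzero and not multiples of one another (their $\T$-degrees differ and neither divides the other in the free module $\mbox{Syz}(I)$). For $j\geq 2$, Proposition \ref{tec1} already tells us $\cD_T$ sends $\kK$ into $\kK$, so $F_{(k-j)\mu+r,j}(\T,\X)\in\kK$; the only real content is non-vanishing and non-divisibility by $F_{\mu,1}$. Here I would invoke the identity $\cD_X\circ\cD_T = \mathrm{id} \bmod P_{\mu,1}$ from \eqref{ident}: if some $F_{(k-j)\mu+r,j}$ were a multiple of $F_{\mu,1}=P_{\mu,1}$, then applying $\cD_X$ repeatedly would force $F_{d-\mu,1}=Q_{d-\mu,1}$ to lie in the ideal generated by $P_{\mu,1}$ modulo $P_{\mu,1}$ — i.e. $Q_{d-\mu,1}$ would be a $\K[\T,\X]$-multiple of $P_{\mu,1}$, contradicting that they form a basis of a free rank-$2$ module. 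One has to be slightly careful that $\cD_X$ is only defined on elements of $\langle X_0,X_1\rangle$ and that the $\T$-degrees stay in range, but Proposition \ref{tec1} guarantees the image of $\cD_T$ lands in $\langle X_0,X_1\rangle$, so the composition is legitimate at each step.

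For part (2), the point is that $\mbox{Res}_\T(F_{\mu,1},\,F_{(k-j)\mu+r,j})$ is a nonzero element of $\kK\cap\K[\X]$ — nonzero because $F_{(k-j)\mu+r,j}$ is not a multiple of $F_{\mu,1}$ (part (1)), so the two bivariate forms in $\T$ share no common factor, and it lies in $\kK\cap\K[\X]$ by the standard elimination property of resultants of elements of $\kK$. Since $\cE_d(\X)$ generates $\kK\cap\K[\X]$ and is irreducible of degree $d$, it suffices to check the resultant has degree exactly $d$ in $\X$ (then it equals $\cE_d$ up to a constant). The bidegree bookkeeping gives $\deg_\X \mbox{Res}_\T = \mu\cdot\ell_j + ((k-j)\mu+r)\cdot 1$ where $\ell_j$ is the $\X$-degree of $F_{(k-j)\mu+r,j}$; each application of $\cD_T$ drops $\T$-degree by $\mu$ and raises $\X$-degree by $1$, so $\ell_j = j$, and $\mu j + (k-j)\mu + r = k\mu + r = d$. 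That is the computation; it is routine once the $\cD_T$ bookkeeping is in place.

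For part (3), I would use Proposition \ref{multiple}: writing $\cF^0 = p^0_\mu$, $\cF^1 = p^1_\mu$ (so $P_{\mu,1}$ gives the required element of $\kK_{1,\mu}$ after noting $P_{\mu,1}(\T,\X)=p^1_\mu(\T)X_0 - p^0_\mu(\T)X_1$, which is the shape in \eqref{form}), an element $G_{i,j}\in\kK_{i,j}$ is a multiple of $P_{\mu,1}$ iff $G_{i,j}(p^0_\mu(\T),p^1_\mu(\T),\X)$ is a multiple of $\cE_d(\X)$. After substituting, $G_{i,j}(p^0_\mu,p^1_\mu,\X)$ is a polynomial whose bidegree in $(\T,\X)$ is $(i + \mu j,\, 0)$ in the sense that it lives in $\K[\T]$... no — more carefully, substituting the $X_i$'s by degree-$\mu$ forms in $\T$ and leaving $X_2$ alone, one sees the result is bihomogeneous of $\X$-degree (coming from $X_2$) at most $j$ and $\T$-degree $i + \mu(j - (\text{number of }X_2))$; the key is that $\cE_d$ restricted appropriately is not divisible by the substituted polynomial unless it is zero, and a degree count using $i + \mu j < d$ forces it. I expect this to need a clean statement: after the substitution the image lies in $\kK$-restricted and has total relevant degree $< d = \deg\cE_d$, hence is forced to vanish, hence $G_{i,j}$ is a multiple of $P_{\mu,1}$. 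This is the step I expect to be the \emph{main obstacle}, because the bidegree vs. divisibility argument after substitution is delicate and one must be precise about which variable the degree count runs over.

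Finally, part (4) is the culmination: the listed $k+1$ elements are all in $\kK$ by parts (1)-(2), and I would show each is a \emph{minimal} generator by a degree/bidegree argument. The crucial input is Theorem \ref{mu} (a minimal generating set has $\T$-degrees $< d-\mu$ except for the two $\mu$-basis syzygies) together with part (3): each $F_{(k-j)\mu+r,j}$ sits at bidegree $((k-j)\mu+r,\,j)$ with $(k-j)\mu+r + \mu j \cdot 0 \ldots$ — rather, the relevant quantity is that $F_{(k-j)\mu+r,j}$ cannot be written as a $\K[\T,\X]$-combination of elements of strictly smaller bidegree lying in $\kK$: any such combination, reduced modulo the lower-degree generators and modulo $F_{\mu,1}$, would contradict either part (3) (an element of $\kK$ of small $i+\mu j$ is a multiple of $F_{\mu,1}$, but $F_{(k-j)\mu+r,j}$ is not) or the minimality of the $\mu$-basis. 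Concretely I would filter $\kK$ by the weight $i + \mu j$: part (3) says everything of weight $< d$ is generated by $F_{\mu,1}$ alone, and at weight exactly $d$ the new contributions are precisely the $F_{(k-j)\mu+r,j}$ for $j=1,\ldots,k-1$ together with $\cE_d$ (weight $d$, $\T$-degree $0$); since their images in $\kK_{\text{weight }d}/(F_{\mu,1}\cdot\kK_{<d})$ are linearly independent over $\K$ in the appropriate bidegree components (they live in distinct bidegrees $((k-j)\mu+r,j)$, all distinct, and none is $F_{\mu,1}$ times something by part (1)), none can be dropped. Assembling: none is redundant, so the set \eqref{family} extends to a minimal generating system of $\kK$. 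I would present parts (1)-(3) as the technical core and part (4) as a short wrap-up once the weight filtration is set up.
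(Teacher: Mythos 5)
Your treatment of parts (1), (2) and (4) follows essentially the same route as the paper: membership in $\kK$ via $\cD_T$ and non-divisibility by $F_{\mu,1}$ via an inductive descent; the resultant degree count $((k-j)\mu+r)\cdot 1+\mu\cdot j=d$ together with irreducibility of $F_{\mu,1}$ for (2); and pseudo-homogeneity in the weight $i+\mu j$ plus graded Nakayama for (4). Two small remarks there: your descent in (1) via repeated $\cD_X$ needs the (true, and implicit in the computation proving Proposition \ref{tec1}) observation that $\cD_X$ sends multiples of $P_{\mu,1}$ to multiples of $P_{\mu,1}$, whereas the paper argues directly from the relation $X_1F_{(k-(j-1))\mu+r,j-1}-p^1_\mu(\T)F_{(k-j)\mu+r,j}\in\langle F_{\mu,1}(\T,\X)\rangle$; and $\cE_d(\X)$ has weight $\mu d$, not $d$, though its minimality is immediate from bidegree considerations, so neither point is serious.

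The genuine gap is part (3), which you yourself flag as the main obstacle and do not close. Proposition \ref{multiple} requires a nonzero element of $\kK_{1,k_0}$, i.e.\ of $\T$-degree one, and the substitution it licenses is $\T\mapsto(\cF^0_{k_0}(\X),\cF^1_{k_0}(\X))$. You instead invoke it with $P_{\mu,1}(\T,\X)=p^1_\mu(\T)X_0-p^0_\mu(\T)X_1$, which has bidegree $(\mu,1)$, and try to substitute $X_0,X_1\mapsto p^0_\mu(\T),p^1_\mu(\T)$; the proposition does not apply with the roles of $\T$ and $\X$ interchanged, and even if one sets up the correct divisibility criterion for multiples of $P_{\mu,1}$ (vanishing of $G_{i,j}(\T,p^0_\mu(\T),p^1_\mu(\T),X_2)$), you never identify a degree-$d$ polynomial that this substituted expression must be divisible by, so the claim that ``a degree count using $i+\mu j<d$ forces it to vanish'' has nothing to push against. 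The repair is exactly the computation you already carried out in part (2): $\mbox{Res}_\T\left(F_{\mu,1}(\T,\X),G_{i,j}(\T,\X)\right)$ lies in $\kK\cap\K[\X]=\langle\cE_d(\X)\rangle$ and is homogeneous of $\X$-degree $i+\mu j<d$, hence vanishes identically; since $F_{\mu,1}(\T,\X)$ is irreducible and cannot divide only a proper factor, the vanishing of the resultant forces $F_{\mu,1}(\T,\X)$ to divide $G_{i,j}(\T,\X)$. This one-line resultant argument is the paper's proof of (3), and with it the rest of your outline goes through.
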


\begin{proof} $^ {}$
\begin{enumerate}
\item By induction on $j,$ the case $j=1$ being obvious. Suppose then $j>1.$ Due to Propositon \ref{tec1}, we know that
$$F_{(k-j)\mu+r,j}(\T,\X)=\cD_T\big(F_{(k-j+1)\mu+r,j-1}(\T,\X)\big)\in\kK.$$ Note also that by construction, we have straightforwardly
$$X_1F_{(k-(j-1))\mu+r,j-1}(\T,\X)-p^1_{\mu}(\T)F_{(k-j)\mu+r,j}(\T, \X)\in\langle F_{\mu,1}(\T, \X)\rangle.
$$
If $F_{(k-j)\mu+r,j}(\T, \X)$ is a multiple of $F_{\mu,1}(\T, \X)$, then as the latter is irreducible, we would then conclude that $F_{(k-(j-1))\mu+r,j-1}(\T,\X)$ is also a multiple of this polynomial, which again contradicts the inductive hypothesis.
\item Clearly $\mbox{Res}_\T\left(F_{\mu,1}(\T, \X),\,F_{(k-j)\mu+r,j}(\T, \X)\right)\in\K[\X].$ Moreover, an explicit computation reveals that
the $\X$-degree of this resultant is equal to $k\mu+r=d,$ which is the degree of $\cE_d(\X).$ So, it must be equal to $\lambda\,\cE_d(\X)$ with $\lambda\in\K$. If $\lambda=0,$ this would imply that both $\{F_{\mu,1}(\T, \X),\,F_{(k-j)\mu+r,j}(\T, \X)\}$ have a non trivial common factor in $K[\T,\X]$. But  $F_{\mu,1}(\T,\X)$ is irreducible, and we just saw in (1) that
$F_{(k-j)\mu+r,j}(\T, \X)$ is not a multiple of it, which then shows that the resultant cannot vanish identically, so $\lambda\neq0.$
\item We have
$$\mbox{Res}_\T\left(F_{\mu,1}(\T, \X),\,G_{i,j}(\T, \X)\right)=\cE_d(\X)\,\alpha_{\mu\,j+i-d}(\X),$$
so in order to have this resultant different from zero, we must have
$0\leq \mu\,j+i-d,$ contrary to our hypothesis. Hence, the resultant above vanishes identically, and due to the irreducibility of $F_{\mu,1}(\T, \X),$ we have that $G_{i,j}(\T, \X)$ must be a multiple of it.
\item  Clearly $F_{\mu,1}(\T,\X)$ is minimal in this set, so it cannot be a combination of the others. Also, the family
$$\{F_{\mu+r,k-1}(\T, \X),\,F_{2\mu+r, k-2}(\T, \X),\,\ldots, F_{d-\mu,1}(\T, \X)\}$$ is pseudo-homogeneous with weighted degree $\deg_\T+\mu\,\deg_\X=d$ (i.e. all the exponents lie on a line). This shows that none of the elements in this family can be a combination of the others, and as we have seen in (1), none of them is a multiple of $F_{\mu,1}(\T,\X)$, so
this is a minimal set of generators of the ideal they generate. To see that they can be extended to a whole set of generators of $\kK,$ consider the maximal ideal
$\mathfrak{M}=\langle \T,\,\X\rangle$ of $R$. The pseudo-homogeneity  combined with (1) and (3) implies straightforwardly that the family \eqref{family} is $\K$-linearly independent in the quotient  $\kK/\mathfrak{M}\kK.$  By the homogeneous version of Nakayama's lemma (see for instance \cite[Exercise $1.5.24$]{BH93}), we can extend this family to a minimal set of generators of $\kK$. This completes the proof.
\end{enumerate}
\end{proof}
\smallskip
\begin{remark}
If $\mu=1$, then one can take $k=d$ or $k=d+1$. If we choose $k=d,$ then it is easy to see that the family \eqref{family} actually specializes in the minimal set of generators of $\kK$ described in \cite[Theorem $2.10$]{CD13}. So, this construction may be regarded somehow as a generalization of the tools used in \cite{CD13} for the case $\mu=1.$
\end{remark}

\bigskip
\section{The case $\mu=2$ with $\C$ having a very singular point}\label{sing}

\subsection{$d$ odd}
In this case, we will show that the family given in Teorem \ref{mtv} (4) is ``almost'' a minimal set of generators of $\kK$. We only need to add one more element  of bidegree $(1,\frac{d+1}2)$ to the list in order to generate the whole $\kK.$
Suppose then in this paragraph that $\mu=2$, and $d=2k-1,$ with $k\in\N,\,k>2$ (otherwise $\mu=1$). Note that in this case, there is a form of $\T$-degree one in \eqref{family}.
We will define an extra element in $\kK$ by computing the so called {\em Sylvester form} among $F_{1,k-1}(\T,\X)$ and $F_{2,1}(\T,\X)$. This process is standard in producing nontrivial elements in $\kK$, see for instance \cite{BJ03, bus09,CD10, CD13}:
\begin{itemize}
\item Write $F_{2,1}(\T,\X)=T_0G_{1,1}(\T,\X)+T_1H_{1,1}(\T,\X),$ with $G_{1,1}(\T,\X),\,H_{1,1}(\T,\X)\in\K[\T,\X].$ Note that this decomposition is not unique.
\item Write $F_{1,k-1}(\T,\X)=T_0\cF^1_{k-1}(\X)-T_1\cF^0_{k-1}(\X),$ with $\cF^i_{k-1}(\X)\in\K[\X],$ homogeneous of degree $d-1.$
\item Set \begin{equation}\label{ddef}
\boxed{F_{1,k}(\T,\X):=\cF^0_{k-1}(\X)G_{1,1}(\T,\X)+\cF^1_{k-1}(\X)H_{1,1}(\T,\X)}.\end{equation}
\end{itemize}
\smallskip
The following claims will be useful in the sequel.
\begin{lemma}\label{loma}
$F_{1,k}(\T,\X)\in\kK_{1,k}\setminus\langle F_{1,k-1}(\T,\X)\rangle,$ in particular it is not identically zero.
\end{lemma}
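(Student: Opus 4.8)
The plan is to verify the two assertions of Lemma~\ref{loma} separately: first that $F_{1,k}(\T,\X)\in\kK_{1,k}$, and then that it does not lie in the principal ideal generated by $F_{1,k-1}(\T,\X)$.

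For membership in $\kK$, I would use the characterization \eqref{oso}: since $F_{1,k}$ is by construction bihomogeneous of bidegree $(1,k)$, it suffices to check that $F_{1,k}(\T,\u(\T))=0$. Substituting $\X\mapsto\u(\T)$ into \eqref{ddef} and using that $F_{2,1}(\T,\u(\T))=0$, we get
$T_0\,G_{1,1}(\T,\u(\T))=-T_1\,H_{1,1}(\T,\u(\T))$,
so $G_{1,1}(\T,\u(\T))$ is divisible by $T_1$ and $H_{1,1}(\T,\u(\T))$ by $T_0$, say $G_{1,1}(\T,\u(\T))=T_1\,g(\T)$ and $H_{1,1}(\T,\u(\T))=-T_0\,g(\T)$ for a common homogeneous $g(\T)$ of the appropriate degree. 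Likewise, since $F_{1,k-1}(\T,\u(\T))=0$ (it lies in $\kK$ by Theorem~\ref{mtv}(1)), we obtain $\cF^0_{k-1}(\u(\T))=T_0\,h(\T)$ and $\cF^1_{k-1}(\u(\T))=T_1\,h(\T)$ for a common $h(\T)$. Plugging these into \eqref{ddef} gives
$F_{1,k}(\T,\u(\T))=T_0h(\T)T_1g(\T)+T_1h(\T)(-T_0g(\T))=0$,
as desired. (One should note that although $G_{1,1},H_{1,1}$ are not unique, a different choice changes $F_{1,k}$ by a multiple of $F_{1,k-1}$, which is harmless for this part and must be tracked for the next.)

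For the non-containment $F_{1,k}\notin\langle F_{1,k-1}\rangle$: a nonzero multiple of $F_{1,k-1}$ of bidegree $(1,k)$ would have to be $\ell(\X)\,F_{1,k-1}(\T,\X)$ for a linear form $\ell(\X)$, hence would be divisible by $F_{1,k-1}$ as a polynomial. So the strategy is to show $F_{1,k}$ is \emph{not} divisible by $F_{1,k-1}$ in $\K[\T,\X]$; this simultaneously shows $F_{1,k}\neq0$ (the zero polynomial being a multiple of everything). The cleanest route is to compute the ``residue'' of $F_{1,k}$ along the inverse map. Recall $F_{1,k-1}(\T,\X)=T_0\cF^1_{k-1}(\X)-T_1\cF^0_{k-1}(\X)$ realizes the inverse of $\phi$ (Proposition~\ref{ssing}, with $\ell=k-1$), and invoke Proposition~\ref{multiple} with $k_0=k-1$: an element $G_{i,j}\in\kK_{i,j}$ is a multiple of $F_{1,k-1}$ precisely when $G_{i,j}(\cF^0_{k-1}(\X),\cF^1_{k-1}(\X),\X)$ vanishes, and otherwise it is not divisible by $F_{1,k-1}$. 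Thus I would evaluate
$F_{1,k}(\cF^0_{k-1}(\X),\cF^1_{k-1}(\X),\X)=\cF^0_{k-1}(\X)G_{1,1}(\cF^0_{k-1},\cF^1_{k-1},\X)+\cF^1_{k-1}(\X)H_{1,1}(\cF^0_{k-1},\cF^1_{k-1},\X)$,
which by the defining relation $F_{2,1}=T_0G_{1,1}+T_1H_{1,1}$ equals exactly $F_{2,1}(\cF^0_{k-1}(\X),\cF^1_{k-1}(\X),\X)$. So the claim reduces to: $F_{2,1}(\cF^0_{k-1}(\X),\cF^1_{k-1}(\X),\X)$ is not a multiple of $\cE_d(\X)$ — equivalently, by Proposition~\ref{multiple} again, $F_{2,1}(\T,\X)=P_{2,1}(\T,\X)$ is \emph{not} a multiple of $F_{1,k-1}(\T,\X)$. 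But $P_{2,1}$ has $\T$-degree $2$ while $F_{1,k-1}$ has $\T$-degree $1$; a factorization $P_{2,1}=\ell(\X)F_{1,k-1}$ with $\ell$ linear in $\X$ would force $P_{2,1}$ to be reducible with a factor linear in $\X$, and comparing with the explicit form $P_{2,1}(\T,\X)=p^1_2(\T)X_0-p^0_2(\T)X_1$ from \eqref{form} one checks this cannot happen (e.g.\ because $p^0_2,p^1_2$ have no common factor, or by a direct degree/irreducibility argument on $P_{2,1}$), giving the contradiction.

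The main obstacle I anticipate is the final reducibility/degree bookkeeping in the last paragraph: one must be careful that the Sylvester-form construction genuinely outputs something of bidegree $(1,k)$ and genuinely nonzero, and that the ambiguity in the splitting $F_{2,1}=T_0G_{1,1}+T_1H_{1,1}$ does not sneak in a spurious multiple of $F_{1,k-1}$ that could conspire to make $F_{1,k}$ divisible by $F_{1,k-1}$ after all. The evaluation-at-the-inverse trick via Proposition~\ref{multiple} is what sidesteps this: it converts everything into a single, choice-independent statement about $P_{2,1}$ modulo $\cE_d$, which is then settled by the structural form \eqref{form} together with the fact that $2=\mu<d-\mu=k-1$, so $P_{2,1}$ cannot be proportional to a syzygy of strictly smaller $\T$-degree.
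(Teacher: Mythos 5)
Your first part (membership in $\kK_{1,k}$) is correct: the direct substitution $\X\mapsto\u(\T)$, combined with the vanishing of $F_{2,1}$ and $F_{1,k-1}$ under that substitution, is a valid and slightly more elementary route than the paper's, which instead computes $F_{1,k}(\cF^0_{k-1}(\X),\cF^1_{k-1}(\X),\X)=F_{2,1}(\cF^0_{k-1}(\X),\cF^1_{k-1}(\X),\X)=\pm\mathrm{Res}_{\T}\big(F_{2,1}(\T,\X),F_{1,k-1}(\T,\X)\big)=\pm\cE_d(\X)$ and invokes Proposition \ref{multiple}. For the second part you do find the same pivot as the paper, namely the identity $F_{1,k}(\cF^0_{k-1},\cF^1_{k-1},\X)=F_{2,1}(\cF^0_{k-1},\cF^1_{k-1},\X)$, and you correctly observe that a bidegree-$(1,k)$ multiple of $F_{1,k-1}$ would have to be $\ell(\X)F_{1,k-1}$ and hence vanish under $\T\mapsto(\cF^0_{k-1},\cF^1_{k-1})$.

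The gap is in how you conclude that this evaluation is nonzero. You attribute to Proposition \ref{multiple} the equivalence ``$G\in\kK$ is a multiple of $F_{1,k-1}$ precisely when $G(\cF^0_{k-1},\cF^1_{k-1},\X)=0$'', but that proposition asserts something different (the evaluation is a multiple of $\cE_d$ if and only if $G\in\kK$); and you then say you must show the evaluation ``is not a multiple of $\cE_d$'', which cannot be what is meant --- since $F_{2,1}\in\kK$, Proposition \ref{multiple} forces the evaluation to \emph{be} a multiple of $\cE_d$; what you need is that it is a \emph{nonzero} multiple. The implication your reduction actually relies on, namely ``$F_{2,1}(\cF^0_{k-1},\cF^1_{k-1},\X)=0$ implies $F_{1,k-1}$ divides $F_{2,1}$'', is true but requires the irreducibility of $F_{1,k-1}$ (equivalently, the coprimality of $\cF^0_{k-1}$ and $\cF^1_{k-1}$), which you neither state nor prove; as written the nonvanishing is not established. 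All of this is bypassed by noting that, because $F_{1,k-1}=T_0\cF^1_{k-1}(\X)-T_1\cF^0_{k-1}(\X)$ is linear in $\T$, the evaluation equals $\pm\mathrm{Res}_{\T}\big(F_{2,1},F_{1,k-1}\big)$, which is $\pm\cE_d(\X)\neq0$ by Theorem \ref{mtv}(2); that single citation simultaneously gives the nonvanishing, the nonzero-ness of $F_{1,k}$, and the non-divisibility by $F_{1,k-1}$.
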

\begin{proof}
By construction, we have $$\begin{array}{ccl}
F_{1,k}(\cF^0_{k-1}(\X),\cF^1_{k-1}(\X),\X)&=&F_{2,1}(\cF^0_{k-1}(\X), \cF^1_{k-1}(\X),\X)\\
&=&\pm\mbox{Res}_\T(F_{2,1}(\T,\X), F_{1,k-1}(\T,\X))=\pm\cE_d(\X),
\end{array}$$
the last equality due to  Theorem \ref{mtv} (2). By Proposition \ref{multiple}, we then conclude that $F_{1,k}(\T,\X)\in\kK_{1,k},$ and it is clearly nonzero. Moreover, as both
$F_{1,k}(\T,\X)$ and $F_{1,k-1}(\T,\X)$ have degree $1$ in $\T,$ the fact that $F_{1,k}\big(\cF^0_{k-1}(\X), \cF^1_{k-1}(\X),X\big)\neq0$ implies that they are $\K$-linearly independent, and from here the rest of the claim follows straightforwardly.
\end{proof}
\smallskip
\begin{lemma}\label{k-1}
$F_{1,k}(\T,\X)\in\langle X_0,X_1\rangle,$ and modulo $F_{2,1}(\T,\X),$ we have
\begin{equation}\label{kk-1}
D_X(F_{1,k}(\T,\X))\in\langle F_{1,k-1}(\T,\X)\rangle.
\end{equation}
\end{lemma}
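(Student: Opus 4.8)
### Proof strategy for Lemma \ref{k-1}

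The plan is to prove the two assertions separately, using the explicit recipe \eqref{ddef} together with the structural information already collected about the family \eqref{family}.

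\textbf{Step 1: $F_{1,k}(\T,\X)\in\langle X_0,X_1\rangle$.} I would look at the construction \eqref{ddef}: $F_{1,k}=\cF^0_{k-1}(\X)\,G_{1,1}+\cF^1_{k-1}(\X)\,H_{1,1}$, where $T_0G_{1,1}+T_1H_{1,1}=F_{2,1}(\T,\X)=P_{\mu,1}(\T,\X)$. By Proposition \ref{kkk}, after the fixed change of coordinates we have $P_{2,1}(\T,\X)=p_2^1(\T)X_0-p_2^0(\T)X_1\in\langle X_0,X_1\rangle$. Since one is always free to choose the decomposition $F_{2,1}=T_0G_{1,1}+T_1H_{1,1}$, and $F_{2,1}$ itself lies in $\langle X_0,X_1\rangle$, I can pick $G_{1,1},H_{1,1}\in\langle X_0,X_1\rangle$ (e.g. write each coefficient of $P_{2,1}$, which is linear in $X_0,X_1$, as $T_0(\cdot)+T_1(\cdot)$). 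Then $F_{1,k}$ is a $\K[\X]$-combination of elements of $\langle X_0,X_1\rangle$, hence lies in $\langle X_0,X_1\rangle$. The only subtlety is that $F_{1,k}$ was only defined up to a multiple of $F_{2,1}$ (Proposition \ref{tec1}-type ambiguity), but since $F_{2,1}\in\langle X_0,X_1\rangle$ as well, membership in $\langle X_0,X_1\rangle$ is independent of the choice.

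\textbf{Step 2: $\cD_X(F_{1,k}(\T,\X))\in\langle F_{1,k-1}(\T,\X)\rangle$ modulo $F_{2,1}$.} Here I would invoke the key identity \eqref{ident} in a disguised form. Note $F_{1,k-1}(\T,\X)=T_0\cF^1_{k-1}(\X)-T_1\cF^0_{k-1}(\X)$, so $F_{1,k-1}\in\langle X_0,X_1\rangle$ only after noting that by Proposition \ref{tec1} the image of $\cD_T$ lies in $\langle X_0,X_1\rangle$, and indeed $F_{1,k-1}=F_{\mu+r,k-1}$ is obtained from iterating $\cD_T$; alternatively $\cF^i_{k-1}(\X)$ are genuinely in $\langle X_0,X_1\rangle$ because $P=(0:0:1)$ forces the inverse map to vanish there (Proposition \ref{ssing}). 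Now I compute $\cD_X(F_{1,k})$: writing $F_{1,k}=X_0\,A_{1,k-1}+X_1\,B_{1,k-1}$ using the decomposition from Step 1 (so $A=T_0 G^0_{1,1}+T_1 H^0_{1,1}$ etc., where $G_{1,1}=X_0G^0_{1,1}+X_1G^1_{1,1}$), one gets $\cD_X(F_{1,k})=p^0_2(\T)A+p^1_2(\T)B$. On the other hand, applying $\cD_X$ to the defining formula \eqref{ddef} and commuting it past the $\K[\X]$-coefficients $\cF^i_{k-1}$, one should recover $\cD_X(F_{1,k})\equiv \cF^0_{k-1}(\X)\,\cD_X(G_{1,1})+\cF^1_{k-1}(\X)\,\cD_X(H_{1,1})\pmod{F_{2,1}}$, and then use that $\cD_X(G_{1,1})$ and $\cD_X(H_{1,1})$ reassemble, via $T_0\cD_X(G_{1,1})+T_1\cD_X(H_{1,1})\equiv \cD_X(F_{2,1})\equiv F_{2,1}\pmod{F_{2,1}}\equiv 0$ — wait, that is too strong; more carefully, $\cD_X\circ\cD_T$ is the identity mod $F_{2,1}$, but here I am applying $\cD_X$ to pieces, not to $\cD_T$ of something. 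The cleanest route is: evaluate $\cD_X(F_{1,k})$ at $\X\mapsto(\cF^0_{k-1}(\X),\cF^1_{k-1}(\X),X_2)$. Both $\cD_X(F_{1,k})$ and $F_{1,k-1}$ are in $\kK$ (Proposition \ref{tec1} and Lemma \ref{loma}), $\cD_X(F_{1,k})$ has $\T$-degree $1$, and since $F_{1,k-1}$ realizes the inverse of $\phi$ (Proposition \ref{ssing}), Proposition \ref{multiple} reduces the claim to showing that $\cD_X(F_{1,k})(\cF^0_{k-1},\cF^1_{k-1},\X)$ is a $\K[\X]$-multiple of $\cE_d(\X)$ of the \emph{right} bidegree, i.e. that after substitution the quotient by $\cE_d$ is the correct scalar making it proportional to $F_{1,k-1}$. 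I would compute this substitution directly: using $T_0\mapsto\cF^0_{k-1}$, $T_1\mapsto\cF^1_{k-1}$ kills $F_{1,k-1}$, and by the Sylvester-form construction the substitution turns $\cD_X(F_{1,k})$ into $\cF^0_{k-1}\cdot(\text{something})+\cF^1_{k-1}\cdot(\text{something})$ which equals $\pm\mbox{Res}_\T$ up to the factors $p^i_2$ evaluated appropriately — giving a multiple of $\cE_d$ by Theorem \ref{mtv}(2), and a degree count pins it down.

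\textbf{Main obstacle.} The delicate point is bookkeeping the ambiguity: both $F_{1,k}$ and $\cD_X(F_{1,k})$ are only well-defined modulo $F_{2,1}$, and $F_{1,k-1}$ sits inside $\langle X_0,X_1\rangle$ only after a nonobvious argument, so I must make sure every congruence is taken modulo the \emph{same} ideal $\langle F_{2,1}\rangle$ and that the $\T$-degree-$1$ identification (two polynomials linear in $\T$ are proportional iff one specialization agrees) is applied legitimately — this is exactly the trick used in Lemma \ref{loma}, and I expect the proof to mirror that one closely, with the extra twist that one works modulo $F_{2,1}$ rather than with honest equalities. A secondary nuisance is checking that the bidegree of $\cD_X(F_{1,k})$ is indeed $(1+\mu,k-1)=(3,k-1)$ and that this matches $\T$-degree of $F_{1,k-1}$ — it does not literally: $F_{1,k-1}$ has $\T$-degree $1$, so the correct reading of \eqref{kk-1} is that $\cD_X(F_{1,k})$ is $F_{1,k-1}$ times a form $q(\T)$ of $\T$-degree $2$, and I must identify that $q(\T)$ is forced to be (a scalar multiple of) the polynomial $q(\T)$ of \eqref{riso}; this identification is where the computation genuinely has to be carried out.
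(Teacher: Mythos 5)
Your Step 1 is correct and coincides with the paper's argument: since $F_{2,1}(\T,\X)=p^1_2(\T)X_0-p^0_2(\T)X_1$ involves only $X_0,X_1$, any decomposition $F_{2,1}=T_0G_{1,1}+T_1H_{1,1}$ has $G_{1,1},H_{1,1}\in\K[\T,X_0,X_1]_{1,1}\subset\langle X_0,X_1\rangle$, and \eqref{ddef} then places $F_{1,k}$ in $\langle X_0,X_1\rangle$. Step 2, however, has a genuine gap. The route you call cleanest --- show that $\cD_X(F_{1,k})(\cF^0_{k-1},\cF^1_{k-1},\X)$ is a multiple of $\cE_d(\X)$ ``of the right bidegree'' and let a degree count pin things down --- cannot deliver the conclusion. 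By Proposition \ref{multiple}, that substitution being a multiple of $\cE_d$ only certifies $\cD_X(F_{1,k})\in\kK$, which you already know from Proposition \ref{tec1}; to conclude divisibility by $F_{1,k-1}=T_0\cF^1_{k-1}-T_1\cF^0_{k-1}$ you would need that substitution to vanish identically (a \emph{nonzero} multiple of $\cE_d$ would in fact rule divisibility out), and no degree count supplies that vanishing. Indeed your own sketch of the substitution (``equals $\pm\mbox{Res}_\T$ up to the factors $p^i_2$'') would, if taken literally, produce a nonzero multiple of $\cE_d$. You also misstate the $\T$-degree of $\cD_X(F_{1,k})$ as $1$ before correcting it to $3$ at the end; the cofactor is a quadratic form in $\T$, and identifying it is precisely the content of the lemma, which you explicitly leave undone (``this identification is where the computation genuinely has to be carried out'').

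The paper closes this with a short direct computation that you circle around but never execute. Take the representative
$\cD_X(F_{1,k})=\cF^0_{k-1}(\X)\,G_{1,1}(\T,p^0_2(\T),p^1_2(\T))+\cF^1_{k-1}(\X)\,H_{1,1}(\T,p^0_2(\T),p^1_2(\T))$.
Since $F_{2,1}(\T,p^0_2(\T),p^1_2(\T))=0$, one has $T_0G_{1,1}(\T,p^0_2,p^1_2)+T_1H_{1,1}(\T,p^0_2,p^1_2)=0$, whence $G_{1,1}(\T,p^0_2,p^1_2)=T_1q_2(\T)$ and $H_{1,1}(\T,p^0_2,p^1_2)=-T_0q_2(\T)$ for a single quadratic form $q_2(\T)$, and therefore $\cD_X(F_{1,k})=\bigl(T_1\cF^0_{k-1}(\X)-T_0\cF^1_{k-1}(\X)\bigr)q_2(\T)\in\langle F_{1,k-1}(\T,\X)\rangle$. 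Note that this $q_2$ is a B\'ezout cofactor of degree $2$, not the degree-$(d-\mu)$ polynomial $q(\T)$ of \eqref{riso} that you conjecture it should be.
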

\begin{proof}
Write as before $F_{2,1}(\T,\X)=T_0G_{1,1}(\T,\X)+T_1H_{1,1}(\T,\X),$ and note that as $F_{2,1}(\T,\X)\in\K[\T,X_0,X_1],$ we then have
$G_{1,1}(\T,\X)=G_{1,1}(\T,X_0,X_1)$ and also $H_{1,1}(\T,\X)=H_{1,1,}(\T,X_0,X_1).$ From the definition of $F_{1,k}(\T,\X)$ given in \eqref{ddef}, we get
$$F_{1,k}(\T,\X)=\cF^0_{k-1}(\X)G_{1,1}(\T,X_0,X_1)+\cF^1_{k-1}(\X)H_{1,1}(\T,X_0,X_1)\in\langle X_0,X_1\rangle,
$$ and a choice for $D_X(F_{1,k}(\T,\X))$ is actually
\begin{equation}\label{ffru}
D_X(F_{1,k}(\T,\X))=\cF^0_{k-1}(\X)G_{1,1}(\T,p^0_2(\T),p^1_2(\T))+\cF^1_{k-1}(\X)H_{1,1}(\T,p^0_2(\T),p^1_2(\T))\end{equation}
From \eqref{form}, we actually get that  $F_{2,1}(\T,\X)\in\K[\T,X_0, X_1],$ and hence
$$F_{2,1}(\T,p^0_2(\T),p^1_2(\T))=0=T_0G_{1,1}(\T,p^0_2(\T),p^1_2(\T))+T_1H_{1,1}(\T,p^0_2(\T), p^1_2(\T)),
$$
so we conclude that there exist $q_2(\T)\in\K[\T]$ homogeneous of degree $2$ such that
$$\begin{array}{ccr}
G_{1,1}(\T,p^0_2(\T),p^1_2(\T))&=&T_1q_2(\T),\\
H_{1,1}(\T,p^0_2(\T), p^1_2(\T))&=&-T_0q_2(\T).
\end{array}
$$
Replacing the left hand side of the above identities in \eqref{ffru}, we get
$$D_X(F_{1,k}(\T,\X))=(T_1\cF^0_{k-1}(\X)-T_0\cF^1_{k-1}(\X))q_2(\T)\in\langle F_{1,k-1}(\T,\X)\rangle,
$$
which concludes the proof of the claim.
\end{proof}

\begin{lemma}\label{koko}
The set $$\{\cE_d(\X),\,F_{1,k-1}(\T,\X),\,F_{1,k}(\T,\X),\,F_{2,1}(\T,\X),\,F_{3,k-2}(\T,\X),\ldots, F_{2(k-2)-1,1}(\T,\X)\}$$ is contained in the ideal $\langle X_0, X_1\rangle.$
\end{lemma}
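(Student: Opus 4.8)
The plan is to verify the membership $G\in\langle X_0,X_1\rangle$ separately for each of the finitely many listed elements $G$, since every one of them has already received (or receives in one line) exactly the structural description that makes the verification immediate.

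First I would dispatch $\cE_d(\X)$. By the expansion \eqref{cucco} we have $\cE_d(\X)=\cE^0_{d}(X_0,X_1)+\cE^1_{d-1}(X_0,X_1)X_2+\cE^2_{d-2}(X_0,X_1)X_2^2$, and the coefficient $\cE^i_{d-i}(X_0,X_1)$ is a homogeneous polynomial in $X_0,X_1$ of degree $d-i\ge d-\mu=d-2\ge 3$ (recall $d=2k-1$ with $k>2$). A nonzero homogeneous polynomial in $X_0,X_1$ of positive degree lies in $\langle X_0,X_1\rangle$, so each summand $\cE^i_{d-i}(X_0,X_1)X_2^i$, and hence $\cE_d(\X)$ itself, lies in $\langle X_0,X_1\rangle$. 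Next, $F_{2,1}(\T,\X)=P_{2,1}(\T,\X)=p^1_2(\T)X_0-p^0_2(\T)X_1$ by \eqref{form}, which is trivially in $\langle X_0,X_1\rangle$; and $F_{1,k}(\T,\X)\in\langle X_0,X_1\rangle$ is precisely the first assertion of Lemma \ref{k-1}.

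It then remains to treat the forms $F_{(k-j)\mu+r,j}(\T,\X)$ for $2\le j\le k-1$, that is, $F_{1,k-1}(\T,\X),F_{3,k-2}(\T,\X),\dots$ By the recursive definition \eqref{orozco} each of these equals $\cD_T\big(F_{(k-j+1)\mu+r,j-1}(\T,\X)\big)$ (the application of $\cD_T$ being legitimate since the $\T$-degree $(k-j+1)\mu+r\ge 2\mu-1$), and Proposition \ref{tec1} asserts that the image of $\cD_T$ is contained in $\langle X_0,X_1\rangle$. Since $P_{2,1}(\T,\X)$ itself lies in $\langle X_0,X_1\rangle$, the ambiguity of $\cD_T$ modulo $P_{2,1}$ does not affect this conclusion. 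This exhausts the list.

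There is no genuine obstacle here — the argument is pure bookkeeping — but it is worth recording why the list of this lemma is exactly the family \eqref{family} with $Q_{d-\mu,1}(\T,\X)$ removed and $F_{1,k}(\T,\X)$ added: the element $Q_{d-\mu,1}(\T,\X)$ is the one member of \eqref{family} that need \emph{not} belong to $\langle X_0,X_1\rangle$. Indeed, if it did, then writing $Q_{d-\mu,1}=a(\T)X_0+b(\T)X_1$ and using that it is a syzygy together with \eqref{riso} would force $a(\T)p^0_\mu(\T)+b(\T)p^1_\mu(\T)=0$, hence $Q_{d-\mu,1}$ would be a $\K[\T]$-multiple of $P_{\mu,1}$, contradicting that $\{P_{\mu,1},Q_{d-\mu,1}\}$ is a basis of $\mathrm{Syz}(I)$.
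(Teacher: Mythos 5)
Your proof is correct and follows essentially the same route as the paper's: $\cE_d(\X)$ via \eqref{cucco}, $F_{2,1}(\T,\X)$ via \eqref{form}, $F_{1,k}(\T,\X)$ via Lemma \ref{k-1}, and the remaining forms via the fact that the image of $\cD_T$ lies in $\langle X_0,X_1\rangle$ (Proposition \ref{tec1}). Your additional remarks --- that the ambiguity of $\cD_T$ modulo $P_{2,1}(\T,\X)$ is harmless because $P_{2,1}(\T,\X)\in\langle X_0,X_1\rangle$, and that $Q_{d-2,1}(\T,\X)$ is precisely the member of \eqref{family} that must be excluded --- are accurate but not required for the statement.
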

\begin{proof}
Each of the $F_{2(k-j)-1,j}(\T,\X)$ is actually equal to $D_\T(F_{2(k-j+1)-1,j-1}(\T,\X)),$ which by definition of this operator, its image always lies in $\langle X_0, X_1\rangle.$
\par The claim for $F_{2,1}(\T,\X)$ follows from its definition in \eqref{form}, and for $F_{1,k}(\T,\X)$ from Lemma \ref{k-1}. To conclude, due to \eqref{cucco}, we also have that 
$\cE_d(\X)\in\langle X_0,\,X_1\rangle.$
\end{proof}

Now we are ready for the main result of this section.
\begin{theorem}\label{mt1o}
Suppose $\mu=2,\,d=2k-1$ with $k\geq2$ and the parametrization $\phi$ induced by the data $\u(\T)$ being proper with a very singular point. Then, the following  $k+2=\frac{d+5}2$ polynomials form a minimal set of generators of $\kK:$
$${\mathrm F}_o:=\{\cE_d(\X),\,F_{2,1}(\T,\X),\, F_{2(k-1)-1,1}(\T,\X),\, \ldots ,\,  F_{1,k-1}(\T,\X),\, F_{1,k}(\T,\X)  \}.
$$
\end{theorem}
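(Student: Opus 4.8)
The plan is to show two things: (a) the $k+2$ listed polynomials all lie in $\kK$ and generate it, and (b) no one of them is redundant. Membership is already in hand: $\cE_d(\X)$ generates $\kK\cap\K[\X]$; the two syzygies $F_{2,1}$ and $F_{2(k-1)-1,1}=Q_{d-2,1}$ are generators of $\mbox{Syz}(I)=\kK_{\ast,1}$; the ladder elements $F_{2(k-j)-1,j}$ for $j=2,\dots,k-1$ are in $\kK$ by Theorem~\ref{mtv}(1); and $F_{1,k}\in\kK_{1,k}$ by Lemma~\ref{loma}. So the real work is the generation statement and minimality.

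\textbf{Generation.} I would argue by induction on the $\X$-degree $j$ that every bihomogeneous $G_{i,j}(\T,\X)\in\kK_{i,j}$ lies in the ideal $\langle {\mathrm F}_o\rangle$ generated by the listed family. For $j=0$ this is the definition of $\cE_d$. For $j=1$ it is the structure of the syzygy module together with Theorem~\ref{mu}, which lets us reduce the $\T$-degree below $d-\mu=d-2$ using only $F_{2,1}$ and the $d-2$ syzygy. For the inductive step, given $G_{i,j}$ with $j\ge 2$, I would first use $\cE_d$ and the pseudo-homogeneity/degree bound of Theorem~\ref{mtv}(3) to assume $i+2j\ge d$; if $i+2j<d$ then $G_{i,j}$ is a multiple of $F_{2,1}$ and we are done. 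When $i+2j\ge d$ and $i$ is large enough ($i\ge 2\mu-1=3$) I apply the operator $\cD_T$: by Proposition~\ref{tec1}, $\cD_T(G_{i,j})\in\kK_{i-2,j+1}\cap\langle X_0,X_1\rangle$, and then $\cD_X$ brings us back to $G_{i,j}$ modulo $F_{2,1}$. The point is that I want to climb down in $\T$-degree until I reach $\T$-degree $0$ or $1$; combining $\cD_T$ with the identity \eqref{ident} expresses $G_{i,j}$, modulo $F_{2,1}$ and modulo things of lower $\X$-degree (handled by induction after an application of $\cD_X$, which raises $\X$-degree — so one must be careful to set the induction up on a composite quantity, e.g.\ on $i$ for fixed $j$, or on $i+2j$), in terms of the already-constructed ladder elements $F_{2(k-j')-1,j'}$ and, at $\T$-degree $1$, in terms of $F_{1,k-1}$ and $F_{1,k}$. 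For the $\T$-degree-$1$ piece the crucial input is Proposition~\ref{multiple}: an element $G_{1,j}\in\kK_{1,j}$ is controlled by the value $G_{1,j}(\cF^0_{k-1}(\X),\cF^1_{k-1}(\X),\X)$, which must be a multiple of $\cE_d$; since $F_{1,k}$ realizes exactly $\pm\cE_d$ under this substitution (Lemma~\ref{loma}) and $F_{1,k-1}$ realizes $0$ with linearly independent leading coefficient, any $\kK_{1,j}$ can be reduced modulo $F_{1,k-1}$ and $F_{1,k}$ down to something in $\langle X_0,X_1\rangle$-free form, ultimately a polynomial multiple of earlier generators. Lemma~\ref{k-1} and Lemma~\ref{koko} are exactly the bookkeeping that keeps everything inside $\langle X_0,X_1\rangle$ so that $\cD_X$ can be applied at each stage.

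\textbf{Minimality.} For minimality I would again invoke graded Nakayama: it suffices to show ${\mathrm F}_o$ is $\K$-linearly independent in $\kK/\mathfrak{M}\kK$ where $\mathfrak{M}=\langle\T,\X\rangle$. The subfamily $\{\cE_d, F_{2,1}, F_{2(k-1)-1,1},\dots, F_{1,k-1}\}$ is already part of a minimal generating set by Theorem~\ref{mtv}(4) (its members are pseudo-homogeneous for the weight $\deg_\T+2\deg_\X=d$, plus $\cE_d$ at weight $d$ and $F_{2,1}$ at weight $4$, with $F_{2,1}$ the unique minimal one), so the only thing to check is that $F_{1,k}$ — which has bidegree $(1,k)$, hence weight $1+2k=d+2$ — cannot be written modulo $\mathfrak{M}\kK$ as a combination of the others. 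The candidates of matching bidegree are $T_0$ or $T_1$ times an $\X$-coefficient times the bidegree-$(1,k-1)$ or $(0,k)$... actually the only generators of weight $\le d+2$ and the right bidegree shape are $F_{1,k-1}$ (bidegree $(1,k-1)$) times a linear $\X$-form and $\cE_d$ times... no, $\cE_d$ has $\T$-degree $0$. So a dependence modulo $\mathfrak{M}\kK$ would force $F_{1,k}\equiv \sum_i \ell_i(\X)\,(\text{bidegree }(1,k-1)\text{ generator})$, i.e.\ $F_{1,k}\in\langle X_0,X_1,X_2\rangle\cdot F_{1,k-1}+\mathfrak{M}^2\kK$-type terms; but then substituting $\X\mapsto(\cF^0_{k-1},\cF^1_{k-1},X_2)$ would kill the right side modulo $\cE_d^2$ while the left side is $\pm\cE_d$, a contradiction since $\cE_d$ is irreducible of degree $d\ge 2$. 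This is the step I expect to be the main obstacle: making precise exactly which products of lower generators can land in bidegree $(1,k)$ and checking each is excluded — the degree/weight arithmetic is delicate because $d$ is odd forces $d+2$ to be odd, which rules out the "$\cE_d$ times a square" type coincidences but one must verify there is no sneaky combination through $F_{2,1}$ and the ladder (their weights are all $\equiv d\pmod 2$ while $F_{1,k}$ has weight $d+2\equiv d$, so parity alone does not immediately save us, and one genuinely needs the substitution argument).
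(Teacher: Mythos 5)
Your proposal follows essentially the same route as the paper: descent in the $\T$-degree via $\cD_T$/$\cD_X$ (with Lemmas \ref{k-1} and \ref{koko} keeping everything inside $\langle X_0,X_1\rangle$ so that $\cD_X$ applies), Proposition \ref{multiple} and the resultant against $F_{1,k-1}$ to handle the low $\T$-degree cases, and graded Nakayama together with Lemma \ref{loma} and pseudo-homogeneity for minimality. The one detail to tighten is that $\cD_T$ preserves the parity of the $\T$-degree, so the descent terminates at $i=2$ as well as at $i=0,1$ (and the term $\cD_X(\cE_d(\X))\in\kK_{2,d-1}$ produced when reassembling $G_{i,j}$ also lands there), so $i=2$ must be treated as a separate base case by the same resultant argument against $F_{1,k-1}$ --- exactly as the paper does.
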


\begin{proof}
Theorem \ref{mtv} shows that the family ${\mathrm F}_o\setminus\{F_{1,k}(\T,\X)\}$ is a set of minimal generators of the ideal that generates it. Lemma \ref{loma} combined with the pseudo-homogeneity of the elements in this family, show that by adding $F_{1,k}(\T,\X)$ to the list, we still get a minimal set of generators (of the ideal generated by the whole family).
\par
Let us show now that ${\mathrm F}_o$ generates $\kK$. Due to Theorem \ref{mu}, it is enough to consider $G_{i,j}(\T,\X)\in\kK$ of bidegree $(i,j)$ with
$i< d-\mu.$ We will proceed by induction on $i$.
\begin{itemize}
\item If $i=0,$ as $\cE_d(\X)$ generates $\kK\cap\K[\X],$ the claim follows straightforwardly.
\item If $i=1$, by  Proposition \ref{multiple}, we have
$$G_{1,j}\big(\cF^0_{k-1}(\X), \cF^1_{k-1}(\X),\X\big)=\cE_d(\X)\,{\mathcal A}_{j-k}(\X),$$
with ${\mathcal A}_{j-k}(\X)\in\K[\X]_{j-k}.$ Then, it is easy to see that
$$\mbox{Res}_\T\big(G_{1,j}(\T,\X)-{\mathcal A}_{j-k}(\X)\,F_{1,k}(\T,\X),\,F_{1,k-1}(\T,\X))\big)=0
$$
by evaluating the first polynomial in the only zero of the second. But this implies that
$$G_{1,j}(\T,\X)-{\mathcal A}_{j-k}(\T,\X)\,F_{1,k}(\T,\X)\in\kK_{1,j}\cap\langle F_{1,k-1}(\T,\X)\rangle,$$
\item For $i=2$, we compute $\mbox{Res}_\T(G_{2,j}(\T,\X), F_{1,k-1}(\T,\X))$ to get $\cE_d(\X)\,{\mathcal A}_{j-1}(\X),$ with ${\mathcal A}_{j-1}(\X)\in\K[\X]_{j-1}.$
By reasoning as in the previous case, we get
$$G_{2,j}(\T,\X)-{\mathcal A}_{j-1}(\X)\,F_{2,1}(\T,\X)\in\kK_{2,j}\cap\langle F_{1,k-1}(\T,\X)\rangle,$$
as this polynomial also vanishes after the specialization $\T\mapsto\underline{\cF}_{k-1}(\X).$
\item If $i\geq3,$ then we can apply $\cD_T$ to $G_{i,j}(\T,\X)$ and get, by Proposition \ref{tec1}, $\cD_T(G_{i,j}(\T,\X))\in\kK_{i-2,j}.$ Now we use the inductive hypothesis and
get the following identity where all elements are polynomials in $\K[\T,\X]:$
\begin{equation}\label{pp}
\begin{array}{ccl}
\cD_T(G_{i,j}(\T,\X))&=&A(\T,\X)\cE_d(\X)+B(\T,\X)F_{1,k}(\T,\X)
+C(\T,\X)F_{2,1}(\T,\X)\\ &&+\sum_{1\leq 2(k-m)-1\leq i-2}D_m(\T,\X)F_{2(k-m)-1,m}(\T,\X).
\end{array}
\end{equation}
Due to \eqref{ident}, we have that $G_{i,j}(\T,\X)=\cD_X\big(\cD_T(G_{i,j}(\T,\X))\big)$ modulo $F_{2,1}(\T,\X),$ and thanks to Lemma \ref{koko}, we can apply $D_X(\cdot)$ to each of the members of the right hand side of \eqref{pp}. We verify straightforwardly from the definition given in \eqref{orozco} that $$\cD_X(F_{2(k-m)-1,m}(\T,\X))= F_{2(k-m+1)-1,m-1}(\T,\X),$$  and then get the following identity modulo  $F_{2,1}(\T,\X):$
$$
\begin{array}{ccl}
G_{i,j}(\T,\X)&=&A(\T,\X)\cD_X(\cE_d(\X))+B(\T,\X)\cD_X(F_{1,k}(\T,\X))\\
&&+C(\T,\X)\cD_X(F_{2,1}(\T,\X))+\sum_{1\leq 2(k-m)-1\leq i-2}D_m(\T,\X)\cD_X(F_{2(k-m)-1,m}(\T,\X)) \\ \\
&=&A(\T,\X)\cD_X(\cE_d(\X))+\tilde{B}(\T,\X)F_{1,k-1}(\T,\X)+\\ &&\sum_{1\leq 2(k-m)-1\leq i-2}D_m(\T,\X)F_{2(k-m+1)-1,m-1}(\T,\X),
\end{array}
$$
where the last equality holds thanks to \eqref{kk-1}. The claim now follows straightforwardly from this identity by noting that $\cD_X(\cE_d(\X))\in\kK_{2,d-1},$ and that we just proved (this is the case $i=2$) that this part of $\kK$ is generated by elements of ${\mathrm F}_o.$ This concludes the proof.
\end{itemize}
\end{proof}

\setlength{\unitlength}{1mm}
\begin{picture}(68,68)
\put(0,0){\vector(1,0){63}}
 \put(64,0){$i$}
 \put(0,0){\vector(0,1){63}}
 \put(0,64){$j$}
\put(6,3){\circle*{0.7} }
\put(7,4){\tiny{$(2,1)$}}
\put(53,3){\circle*{0.7} }
 \put(54,4){\tiny{$(2k-3,1)$}}
\put(47,6){\circle*{0.7} }
 \put(48,7){\tiny{$(2k-5,2)$}}
 \put(41,9){\circle*{0.7} }
 \put(35,12){\circle*{0.7} }
  \put(29,15){\circle*{0.7} }
  \multiput(47,3)(2,0){3}{\line(1,0){0.5}}
  \multiput(47,3)(0,1){2}{\line(0,1){0.5}}
 \multiput(41,6)(2,0){3}{\line(1,0){0.5}}
  \multiput(41,6)(0,1){2}{\line(0,1){0.5}}
 \put(3,28){\circle*{0.7}}
\put(4,29){\tiny{$(1,k-1)$}}
\put(3,31){\circle*{0.7}}
\put(4,32){\tiny{$(1,k)$}}
\put(9,25){\circle*{0.7}}
\multiput(3,25)(2,0){3}{\line(1,0){0.5}}
  \multiput(3,25)(0,1){2}{\line(0,1){0.5}}
\put(0,59){\circle*{0.7}}
\put(1,60){\tiny{$(0,2k-1)$}}
\end{picture}

\begin{example}
For $k\geq 3$, consider
\[u_0(T_0,T_1)=T_0^{2k-1},\, u_1(T_0,T_1)=T_0^{2k-3}T_1^2,\, u_2(T_0,T_1)=T_1^{2k-1}.\]
 These polynomials parametrize a curve of degree $2k-1$ with $\mu =2$ and 
 \[T_1^2X_0-T_0^ 2X_1,\qquad  T_1^{2k-3}X_1-T_0^{2k-3}X_2\] as $\mu $ basis. The minimal system of generators of $ \kK$ given in Theorem \ref{mt1o} is in this case
\[\begin{array}{l}
\cE(\X)=X_1^{2k-1}-X_0^{2k-3}X_2^2,\\
F_{2,1}(\T,\X)=T_1^2X_0-T_0^ 2X_1,\\
F_{d-2,1}(\T,\X)=F_{2(k-1)-1,1}=T_1^{2k-3}X_1-T_0^{2k-3}X_2,\\
\quad \vdots \\
F_{2(k-j)-1,j}(\T,\X)=T_1^{2(k-j)-1}X_1^j-T_0^{2(k-j)-1}X_0^{j-1}X_2\\
 \quad \vdots \\
F_{1,k-1}(\T,\X)=T_1X_1^{k-1}-T_0X_0^{k-2}X_2,\\
F_{1,k}(\T,\X)=T_0X_1^k-T_1X_0^{k-1}X_2.
\end{array}
\]
\end{example}

\bigskip
\subsection{$d$ even}
We will assume here that $d=2k,$ with $k\geq3$ and that $\mu=2.$
In this case, the family in Theorem \ref{mtv}(4) explicits as
$$\{\cE_d(\X),\,F_{2,1}(\T, \X),\,F_{2,k-1}(\T, \X),\,F_{4, k-2}(\T, \X),\,\ldots, F_{2(k-1),1}(\T, \X)\},$$
and note that there are not generators of degree $1$ in $\T$. We will produce two of them by making suitable polynomial combinations among $F_{2,1}(\T,\X)$ and
$F_{2,k-1}(\T,\X)$ as follows:
Write
\begin{equation}\label{suicigia}
\begin{array}{ccl}
F_{2,1}(\T,\X)&=&T_0^2\cF^0_1(\X)+T_1^2\cF^1_1(\X)+T_0T_1\cF^{*}_1(\X)\\
F_{2,k-1}(\T,\X)&=&T_0^2\cM^0_{k-1}(\X)+T_1^2\cM^1_{k-1}(\X)+T_0T_1\cM^{*}_{k-1}(\X),
\end{array}
\end{equation}
and define $F^0_{1,k}(\T,\X)$ and $F^1_{1,k}(\T,\X)$ via the following identities:
\begin{equation}\label{form1}
\begin{array}{ccc}
\cM^0_{k-1}(\X)F_{2,1}(\T,\X)-\cF^0_1(\X)F_{2,k-1}(\T,\X)&=&T_1\,F^0_{1,k}(\T,\X),\\
\cM^1_{k-1}(\X)F_{2,1}(\T,\X)-\cF^1_1(\X)F_{2,k-1}(\T,\X)&=&T_0\,F^1_{1,k}(\T,\X).
\end{array}
\end{equation}
We write
\begin{equation}\label{cochu}
\begin{array}{ccl}
F^0_{1,k}(\T,\X)&=&T_0\cF^{0,0}_k(\X)-T_{1}\cF^{0,1}_k(\X)\\
F^1_{1,k}(\T,\X)&=&T_0\cF^{1,0}_k(\X)-T_{1}\cF^{1,1}_k(\X)
\end{array}
\end{equation}

\begin{proposition}\label{1forms}
$^{}$
\begin{enumerate}
\item $F^i_{1,k}(\T,\X)\in\kK_{1,k}\cap\langle X_0,\,X_1\rangle,$ for $i=0,1.$
\item Up to a nonzero constant in $\K,$
$$\cF^{0,0}_k(\X)\cF^{1,1}_k(\X)-\cF^{1,0}_k(\X)\cF^{0,1}_k(\X)=\mbox{Res}_\T(F^0_{1,k}(\T,\X), F^1_{1,k}(\T,\X))=\cE_d(\X).$$
\item $\{F^0_{1,k}(\T,\X),\,F^1_{1,k}(\T,\X)\}$ is a basis of the $\K[\X]$-module $\kK_{1,*}.$
\item Modulo $F_{2,1}(\T,\X),\, D_X\big(F^i_{1,k}(\T,\X)\big)\in\langle F_{2,k-1}(\T,\X)\rangle$ for $i=0,1.$
\end{enumerate}
\end{proposition}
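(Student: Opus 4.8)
The plan is to verify the four assertions in order; the computations are routine for (1)--(3), and the only genuinely delicate step is (4). \emph{(1).} First one checks that the right-hand sides of \eqref{form1} are indeed divisible by $T_1$ and by $T_0$: by \eqref{suicigia} the coefficient of $T_0^2$ in $\cM^0_{k-1}F_{2,1}-\cF^0_1 F_{2,k-1}$ equals $\cM^0_{k-1}\cF^0_1-\cF^0_1\cM^0_{k-1}=0$, and symmetrically the coefficient of $T_1^2$ in $\cM^1_{k-1}F_{2,1}-\cF^1_1 F_{2,k-1}$ vanishes; hence $F^0_{1,k}$ and $F^1_{1,k}$ are well-defined bihomogeneous polynomials of bidegree $(1,k)$. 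Since $F_{2,1},F_{2,k-1}\in\kK$, the right-hand sides of \eqref{form1} lie in $\kK$, and as $\kK$ is prime with $T_0,T_1\notin\kK$ we get $F^i_{1,k}\in\kK_{1,k}$. For the inclusion in $\langle X_0,X_1\rangle$, recall that $F_{2,1}$ has the shape \eqref{form} and that $F_{2,k-1}=\cD_T(F_{4,k-2})$ lies in $\langle X_0,X_1\rangle$ by Proposition \ref{tec1}; so both right-hand sides of \eqref{form1} lie in $\langle X_0,X_1\rangle$, and dividing out $T_1$ (resp. $T_0$), a nonzerodivisor modulo the prime ideal $\langle X_0,X_1\rangle$, gives $F^i_{1,k}\in\langle X_0,X_1\rangle$.

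\emph{(2).} I would first record that $F^i_{1,k}\neq0$: if $F^0_{1,k}=0$ then $\cM^0_{k-1}F_{2,1}=\cF^0_1 F_{2,k-1}$; as $F_{2,1}$ is irreducible and, by Theorem \ref{mtv}(1), does not divide $F_{2,k-1}$, it would divide $\cF^0_1$, forcing $\cF^0_1=0$ on degree grounds and then $\cM^0_{k-1}=0$, i.e. $T_1\mid F_{2,1}$, impossible; likewise for $F^1_{1,k}$. Writing $F^i_{1,k}=T_0\cF^{i,0}_k-T_1\cF^{i,1}_k$, each $\cF^{i,\ell}_k$ is nonzero of degree exactly $k$ (if one vanished, the other would be a nonzero element of $\kK\cap\K[\X]=\langle\cE_d\rangle$ of degree $k<d$). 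The first equality in (2) is the resultant formula for two forms linear in $\T$. For the second, the B\'ezout identity for resultants writes $\mbox{Res}_\T(F^0_{1,k},F^1_{1,k})$ as a $\K[\X]$-combination of $F^0_{1,k}$ and $F^1_{1,k}$, so it vanishes after $\X\mapsto\u(\T)$ and hence lies in $\langle\cE_d\rangle$; being homogeneous of degree $2k=d$, it equals $\cE_d$ up to a nonzero constant provided it is nonzero. Nonvanishing is equivalent to $F^0_{1,k}$ and $F^1_{1,k}$ not being $\K(\X)$-proportional; if they were, substituting back into \eqref{form1} and again using that $F_{2,1}$ is irreducible and $F_{2,1}\nmid F_{2,k-1}$ would force $\cF^0_1=\cF^1_1=0$, i.e. $F_{2,1}=T_0T_1\cF^*_1(\X)$, contradicting irreducibility.

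\emph{(3).} This follows from (1), (2) and Proposition \ref{multiple}. For linear independence, if $aF^0_{1,k}+bF^1_{1,k}=0$ with $a,b\in\K[\X]$, comparing coefficients of $T_0$ and $T_1$ and eliminating yields $a\,(\cF^{0,0}_k\cF^{1,1}_k-\cF^{0,1}_k\cF^{1,0}_k)=0$, hence $a\,\cE_d=0$ by (2), so $a=0$ and then $b=0$. For generation, given $G=T_0\cB_j-T_1\cA_j\in\kK_{1,j}$, Proposition \ref{multiple} applied with reference element $F^0_{1,k}$ (resp. $F^1_{1,k}$) shows that $\cF^{0,1}_k\cB_j-\cF^{0,0}_k\cA_j$ (resp. $\cF^{1,1}_k\cB_j-\cF^{1,0}_k\cA_j$) is a multiple of $\cE_d$. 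Solving $\alpha\cF^{0,0}_k+\beta\cF^{1,0}_k=\cB_j$, $\alpha\cF^{0,1}_k+\beta\cF^{1,1}_k=\cA_j$ by Cramer's rule, the two numerators are exactly these multiples of $\cE_d$ while the denominator is a nonzero constant times $\cE_d$, so $\alpha,\beta\in\K[\X]$ and $G=\alpha F^0_{1,k}+\beta F^1_{1,k}$.

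\emph{(4).} This is the crux. I would apply $\cD_X$ to the two identities \eqref{form1}, which is legitimate since all terms lie in $\langle X_0,X_1\rangle$. Using that $\cD_X$ kills multiples of $F_{2,1}$, that $\cD_X(F_{2,k-1})=\cD_X(\cD_T(F_{4,k-2}))\equiv F_{4,k-2}\pmod{F_{2,1}}$ by \eqref{ident}, and that $\cD_X$ is additive and (modulo $F_{2,1}$) compatible with multiplication by $T_0,T_1$ and by elements of $\K[\X]$, one obtains
\[
T_1\,\cD_X(F^0_{1,k})\equiv-\cF^0_1\,F_{4,k-2}\quad\text{and}\quad T_0\,\cD_X(F^1_{1,k})\equiv-\cF^1_1\,F_{4,k-2}\pmod{F_{2,1}}.
\]
From the definition of $\cD_T$ one has $X_0F_{4,k-2}\equiv p^0_2(\T)F_{2,k-1}$ and $X_1F_{4,k-2}\equiv p^1_2(\T)F_{2,k-1}\pmod{F_{2,1}}$, so $\cF^0_1\,F_{4,k-2}\equiv\cF^0_1\big(p^0_2(\T),p^1_2(\T)\big)F_{2,k-1}\pmod{F_{2,1}}$. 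The key point is that $\cF^0_1(\X)=F_{2,1}(1,0,\X)$, whence $\cF^0_1\big(p^0_2(\T),p^1_2(\T)\big)=F_{2,1}\big(1,0,p^0_2(\T),p^1_2(\T)\big)$, which by homogeneity of $p^0_2,p^1_2$ vanishes at $T_1=0$ and is therefore divisible by $T_1$, say $\cF^0_1(p^0_2,p^1_2)=T_1\,s(\T)$ with $s$ linear in $\T$. Hence $T_1\big(\cD_X(F^0_{1,k})+s(\T)F_{2,k-1}\big)\equiv0\pmod{F_{2,1}}$, and since $F_{2,1}$ is irreducible and coprime to $T_1$ we obtain $\cD_X(F^0_{1,k})\equiv-s(\T)F_{2,k-1}\in\langle F_{2,k-1}\rangle$. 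The argument for $F^1_{1,k}$ is identical, with $\cF^1_1(\X)=F_{2,1}(0,1,\X)$ and divisibility by $T_0$. The step likely to require the most care is precisely this bookkeeping with $\cD_X$ modulo $F_{2,1}$: checking its additivity, its compatibility with the relevant multiplications, and the congruences $\cD_X\circ\cD_T=\mathrm{id}$ and $X_\ell F_{4,k-2}\equiv p^\ell_2(\T)F_{2,k-1}$, all of which hold only up to $F_{2,1}$.
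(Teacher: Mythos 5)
Your proof is correct and follows essentially the same route as the paper's: (1) from the definitions, (2) via the coprimality of $F_{2,1}(\T,\X)$ and $F_{2,k-1}(\T,\X)$, (3) by reducing an arbitrary element of $\kK_{1,*}$ against the two forms (you via Cramer's rule, the paper via a resultant reduction — the same computation), and (4) by applying $\cD_X$ to \eqref{form1} and extracting the factor $T_i$ from $\cF^i_1(p^0_2(\T),p^1_2(\T))$. The only blemish is a harmless mislabeling in your non-vanishing argument in (2): it is $\cF^0_1(\X)=0$ (not $\cM^0_{k-1}(\X)=0$) that says $T_1\mid F_{2,1}(\T,\X)$, and the contradiction is already reached there.
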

\begin{proof} $^{}$
\begin{enumerate}
\item Follows straightforwardly from the definition of $F^i_{1,k}(\T,\X)$ given in \eqref{form1}, by taking into account that both $F_{2,1}(\T,\X)$ and $F_{2,k-1}(\T,\X)$ are elements of
$\kK\cap\langle X_0, X_1\rangle.$
\item The fact that $\mbox{Res}_\T(F^0_{1,k}(\T,\X), F^1_{1,k}(\T,\X))$ coincides with $\cF^{0,0}_k(\X)\cF^{1,1}_k(\X)-\cF^{1,0}_k(\X)\cF^{0,1}_k(\X)$ follows just from the definition of $\mbox{Res}_\T$ and \eqref{cochu}. As both $F^i_{1,k}(\T,\X)\in\kK,\, i=0,1,$ it  turns out then that  $\mbox{Res}_\T(F^0_{1,k}(\T,\X), F^1_{1,k}(\T,\X))$ must be a multiple of $\cE_d(\X).$ Computing degrees, both polynomials have the same degree $2k=d,$ then the resultant actually must be equal to $\lambda\,\cE_d(\X).$ To see that $\lambda\neq0,$ it is enough to show that the forms $F^i_{1,k}(\T,\X)$ are
$\K$-linearly independent as they have the same bidegree. Suppose that this is not the case, and write $\lambda_0F^0_{1,k}(\T,\X)+\lambda_1F^1_{1,k}(\T,\X)=0$ with $\lambda_0,\,\lambda_1\in\K,$ not both of them equal to zero. We will have then, from \eqref{form1}:
$$(\lambda_0T_0\cM^0_{k-1}(\X)+\lambda_1T_1\cM^1_{k-1}(\X))F_{2,1}(\T,X)=(\lambda_0T_0\cF^0_1(\X)+\lambda_1T_1\cF^1_1(\X))F_{2,k-1}(\T,\X)
$$
From Theorem \ref{mtv} (2), we know that $F_{2,1}(\T,\X)$ and $F_{2,k-1}(\T,\X)$ are coprime, so an identity like above cannot hold unless it is identically zero, which forces $\lambda_0=\lambda_1=0$,  a contradiction to our assumption.
\item The $\K[\X]$-linear independence of the family  $\{F^0_{1,k}(\T,\X),\,F^1_{1,k}(\T,\X)\}$ follows from the fact that their $\T$-resultant is not zero, which has been shown already in (2). So, it is enough to show that any other element in $\kK_{1,*}$ is a polynomial combination of these two. Let $G_{1,j}(\T,\X)\in\kK_{1,j}.$ Then, as before, we have that
$$\mbox{Res}_\T\big(F^0_{1,k}(\T,\X),G_{1,j}(\T,\X)\big)=\cE_d(\X)\cP_{j-k}(\X),
$$ with $\cP_{j-k}(\X)\in\K[\X]_{j-k}.$ If the latter is identically zero, then the claim follows straightforwardly. Otherwise (note that this immediately implies $j\geq k$), set
$$\tilde{G}_{1,j}(\T,\X):=G_{1,j}(\T,\X)-\cP_{j-k}(\X)\,F^1_{1,k}(\T,\X)\in\K[\T,\X]_{1,j}.
$$
It is then easy to show that $\mbox{Res}_\T\big(F^0_{1,k}(\T,\X),\tilde{G}_{1,j}(\T,\X)\big)=0,$ which implies that $\tilde{G}_{1,j}(\T,\X)\in\langle F^1_{1,k}(\T,\X)\rangle$, and so we get immediately from the definition of $\tilde{G}_{1,j}(\T,\X)$ given above that $G_{1,j}(\T,\X)\in\langle F^0_{1,k}(\T,\X), F^1_{1,k}(\T,\X)\rangle.$
\item First note that, because of what we just proved in (1), the operator $\cD_X$ can be applied to $F^i_{1,k}(\T,\X)$ for $i=0,1.$ Also, it is immediate to check that the polynomials
$\cF^0_1(\X)$ and $\cF^1_1(\X)$ defined in \eqref{suicigia}  belong to $\langle X_0,X_1\rangle.$ So we can actually apply $\cD_X$ to both identities in \eqref{form1} and define
$\cD_X(F^i_{1,k}(\T,\X))$ in such a way that
$$
\begin{array}{ccc}
-\cF^0_1(p^0_2(\T), p^1_2(\T))F_{2,k-1}(\T,\X)&=&T_1\cD_X(F^0_{1,k}(\T,\X)),\\
-\cF^1_1(p^-_2(\T), p^1_2(\T))F_{2,k-1}(\T,\X)&=&T_0\cD_X(F^1_{1,k}(\T,\X)).
\end{array}
$$
Note that $F_{2,k-1}(\T,\X)$ cannot not have any proper factor. Indeed, by Theorem \ref{mtv}, it belongs to a subset of a minimal generator of the (prime) ideal $\kK.$ This shows that
$T_i$ divides  $-\cF^i_1(p^0_2(\T), p^1_2(\T))$ for $i=0,1$ and hence $\cD_X(F^i_{1,k}(\T,\X))\in\langle F_{2,k-1}(\T,\X)\rangle$ for $i=0,\,1.$
\end{enumerate}
\end{proof}

Now we are ready to prove the main theorem of this section. Note just that if $n=4$ and $\mu=2,$ if there is a point of multiplicity strictly larger than $\mu$, then it is a triple point and that forces $\mu=1,$ a contradiction with our hypothesis.

\begin{theorem}\label{mt1e}
Suppose $\mu=2,\,d=2k$ with $k\geq3$ and the parametrization being proper with a very singular point. Then, a minimal set of generators of $\kK$ is the following set of $k+3=\frac{d+6}2$ polynomials
$${\mathrm F}_e:=\{\cE_d(\X),\,F^0_{1,k}(\T,\X),\,F^1_{1,k}(\T,\X),\,F_{2,1}(\T,\X),\,F_{2,k-1}(\T,\X),\ldots, F_{2(k-1),1}(\T,\X)\}.
$$
\end{theorem}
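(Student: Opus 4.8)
The plan is to mirror closely the structure of the proof of Theorem~\ref{mt1o}, since the even case differs from the odd one only in that the unique degree-one generator of $\kK_{*,1}$ is replaced by the pair $F^0_{1,k}(\T,\X),\,F^1_{1,k}(\T,\X)$. First I would establish minimality. Theorem~\ref{mtv}(4) already gives that $\{\cE_d(\X),\,F_{2,1},\,F_{2,k-1},\,F_{4,k-2},\ldots,F_{2(k-1),1}\}$ is part of a minimal generating set of $\kK$; the argument there is pseudo-homogeneity (all exponents of the $F$'s lie on the line $\deg_\T+2\deg_\X=d$) together with the fact that none of these is a multiple of $F_{2,1}$ and that the low-degree part of $\kK$ is generated by $F_{2,1}$ (Theorem~\ref{mtv}(3)). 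To add $F^0_{1,k}$ and $F^1_{1,k}$ to this list while preserving minimality, I would work modulo $\mathfrak{M}\kK$ with $\mathfrak{M}=\langle\T,\X\rangle$ and check $\K$-linear independence of the full family in $\kK/\mathfrak{M}\kK$: the two new elements have bidegree $(1,k)$, which does not occur among the pseudo-homogeneous $F$'s (those with $\deg_\T=2$ sit at $\deg_\X=(d-2)/2=k-1$), so they cannot be combined with the old family; and $F^0_{1,k},\,F^1_{1,k}$ are $\K$-linearly independent of each other by Proposition~\ref{1forms}(2) (their $\T$-resultant is $\cE_d\neq0$). Then the homogeneous Nakayama lemma, exactly as in Theorem~\ref{mtv}(4), gives that ${\mathrm F}_e$ extends to a minimal generating set, so once we show it generates $\kK$ we are done.

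For the generation statement I would argue by induction on the $\T$-degree $i$ of an arbitrary $G_{i,j}(\T,\X)\in\kK_{i,j}$, and by Theorem~\ref{mu} it suffices to treat $i<d-\mu=2k-2$. The base case $i=0$ is immediate since $\cE_d$ generates $\kK\cap\K[\X]$. For $i=1$: by Proposition~\ref{1forms}(3), $\kK_{1,*}$ is the free $\K[\X]$-module on $F^0_{1,k},\,F^1_{1,k}$, so every $G_{1,j}$ is already a $\K[\X]$-combination of the two new generators — this is actually cleaner than the odd case. For $i=2$: compute $\mbox{Res}_\T(G_{2,j}(\T,\X),\,F^0_{1,k}(\T,\X))=\cE_d(\X)\,\mathcal{A}_{j-k+1}(\X)$ for some $\mathcal{A}\in\K[\X]$ (the degree count: both arguments have $\X$-degree summing correctly, giving $\cE_d$ times a form of the stated degree), subtract $\mathcal{A}_{j-k+1}(\X)\,F_{2,1}(\T,\X)$ — which lies in $\kK_{2,j}$ — and check that the difference has vanishing $\T$-resultant against $F^0_{1,k}$, hence lies in $\langle F^0_{1,k}\rangle\cap\K[\T,\X]_{2,j}$; but the $\T$-degree-$1$ factor forces it into $\langle F_{2,k-1}\rangle$ after possibly also subtracting a multiple using $F^1_{1,k}$, and in any case everything ends up in the ideal generated by ${\mathrm F}_e$. (One should be a little careful to reduce against \emph{both} $F^0_{1,k}$ and $F^1_{1,k}$ so that the remainder, having $\T$-degree at most one and vanishing resultant against one of them, is a $\K[\X]$-multiple of $F_{2,k-1}$.)

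For the inductive step $i\geq3$ I would copy the mechanism of Theorem~\ref{mt1o} verbatim: apply $\cD_T$ to $G_{i,j}$, which by Proposition~\ref{tec1} lands in $\kK_{i-2,j}$, invoke the inductive hypothesis to write $\cD_T(G_{i,j})$ as a $\K[\T,\X]$-combination of ${\mathrm F}_e$, then apply $\cD_X$ to both sides. By Proposition~\ref{tec1}(\ref{ident}) the left side returns $G_{i,j}$ modulo $F_{2,1}$; on the right side I need each generator appearing to lie in $\langle X_0,X_1\rangle$ so that $\cD_X$ applies. This is where the even analogue of Lemma~\ref{koko} is needed: $\cE_d\in\langle X_0,X_1\rangle$ by \eqref{cucco}, $F_{2,1}$ and all the $F_{2(k-m),m}$ lie in $\langle X_0,X_1\rangle$ (the latter are iterated $\cD_T$-images, hence in $\langle X_0,X_1\rangle$ by Proposition~\ref{tec1}), and $F^0_{1,k},\,F^1_{1,k}\in\langle X_0,X_1\rangle$ by Proposition~\ref{1forms}(1). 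I would record this as a short lemma before the theorem, or just inline it. Then, using $\cD_X(F_{2(k-m),m})=F_{2(k-m+1),m-1}$ (immediate from the recursion \eqref{orozco}), $\cD_X(F^i_{1,k})\in\langle F_{2,k-1}\rangle$ modulo $F_{2,1}$ from Proposition~\ref{1forms}(4), and $\cD_X(\cE_d)\in\kK_{2,d-1}$ — which is generated by ${\mathrm F}_e$ by the already-proven case $i=2$ — the right-hand side becomes a combination of elements of ${\mathrm F}_e$ plus a multiple of $F_{2,1}$, which is itself in ${\mathrm F}_e$; this closes the induction.

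The main obstacle, and the only place where the even case is genuinely more delicate than the odd one, is the $i=2$ step: with two degree-$(1,k)$ generators instead of one, the reduction of a $\T$-degree-$2$ element against the $\T$-resultant must be organized so the remainder is unambiguously a $\K[\X]$-multiple of $F_{2,k-1}$ (equivalently, lies in $\langle F^0_{1,k},F^1_{1,k}\rangle\cap\langle X_0,X_1\rangle\cap\K[\T,\X]_{2,*}$ and then reduces via \eqref{form1}). I expect this to go through by the same "evaluate at the unique zero of the degree-one form'' trick used repeatedly above, together with Proposition~\ref{1forms}(3), but it requires writing $G_{2,j}$ modulo the ideal generated by $F^0_{1,k},F^1_{1,k}$ and $F_{2,k-1}$ and checking the residual is a $\K[\X]$-multiple of $F_{2,1}$ via Theorem~\ref{mtv}(3) applied to a bidegree with $i+2j<d$ — which needs a small degree bookkeeping check that I would carry out explicitly. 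Everything else is a routine transcription of the odd-$d$ argument with $F_{1,k-1}$ replaced by $F_{2,k-1}$ and $F_{1,k}$ replaced by the pair $F^0_{1,k},F^1_{1,k}$.
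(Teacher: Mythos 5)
Your overall architecture is the paper's: minimality via pseudo-homogeneity, Proposition~\ref{1forms}(3) and Nakayama; generation by induction on the $\T$-degree, with $i=0,1$ immediate and $i\geq3$ handled by $\cD_T$, the inductive hypothesis, and $\cD_X$ together with Proposition~\ref{1forms}(1),(4) and the analogue of Lemma~\ref{koko}. All of that is fine. The gap is exactly where you suspect it is: the case $i=2$, and the reductions you sketch there do not go through. First, a bookkeeping slip: $\mbox{Res}_\T\big(G_{2,j}(\T,\X),F^0_{1,k}(\T,\X)\big)$ has $\X$-degree $2k+j=d+j$, so it equals $\cE_d(\X)\,\mathcal{A}_j(\X)$, not $\cE_d(\X)\,\mathcal{A}_{j-k+1}(\X)$. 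More seriously, subtracting $\mathcal{A}(\X)F_{2,1}(\T,\X)$ does \emph{not} make the resultant against $F^0_{1,k}$ vanish: by Lemma~\ref{emma}, $\mbox{Res}_\T\big(F^0_{1,k}(\T,\X),F_{2,1}(\T,\X)\big)=\pm\cE_d(\X)\,\mathcal{L}^0_1(\X)$ with $\mathcal{L}^0_1$ a nonzero \emph{linear} form, so the difference has resultant $\cE_d\,(\mathcal{A}_j\mp\mathcal{A}\,\mathcal{L}^0_1)$, which vanishes only if $\mathcal{L}^0_1$ divides $\mathcal{A}_j$ --- not automatic. Your fallback, reducing modulo $\langle F^0_{1,k},F^1_{1,k},F_{2,k-1}\rangle$ and invoking Theorem~\ref{mtv}(3) for the residual, also fails on degree grounds: Theorem~\ref{mtv}(3) applies to bidegree $(2,j)$ only when $2+2j<d$, i.e.\ $j\leq k-2$, whereas the elements of $\kK_{2,j}$ you must handle (e.g.\ $\cD_X(\cE_d)\in\kK_{2,d-1}$, which your own $i\geq3$ step needs) have $j\geq k-1$, and there is no division procedure in this bigraded setting that guarantees a remainder of such low $\X$-degree.

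What is actually needed, and what the paper does, is an explicit Sylvester-form manipulation: write $G_{2,j}=T_0^2\cG^0_j+T_1^2\cG^1_j+T_0T_1\cG^*_j$ and form the combinations $\cG^0_j F_{2,1}-\cF^0_1 G_{2,j}=T_1 H_{1,j+1}$ and $\cG^0_j F_{2,k-1}-\cM^0_{k-1}G_{2,j}=T_1 H^*_{1,j+k-1}$ (notation of \eqref{suicigia}), which lie in $T_1\cdot\kK_{1,*}$. Expanding $H$, $H^*$ in the basis $\{F^0_{1,k},F^1_{1,k}\}$ and comparing with the defining identity \eqref{form1} yields the key coefficient identity $\cG^0_j=\cM^0_{k-1}\,\alpha_{j-k+1}-\cF^0_1\,\alpha^*_{j-1}$; consequently $\tilde{G}_{2,j}:=G_{2,j}-\alpha_{j-k+1}F_{2,k-1}+\alpha^*_{j-1}F_{2,1}$ has no $T_0^2$-term, hence equals $T_1 G^*_{1,j}$ with $G^*_{1,j}\in\kK_{1,j}$ (since $\kK$ is prime and $T_1\notin\kK$), and Proposition~\ref{1forms}(3) finishes. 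Without this step, or a working substitute for it, the induction cannot start at $i=2$ and the $i\geq3$ step has nothing to rest on, so as written the proof is incomplete.
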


\begin{proof}
The proof follows the same lines as the proof of Theorem \ref{mt1o}. To begin with, Theorem \ref{mtv} combined with Proposition \ref{1forms}(3) show that ${\mathrm F}_e$ is a minimal set of generators of an ideal contained in $\kK.$ In order to see that they are equal, we will proceed again by induction on the $\T$-degree of the
forms, the case $i=0$ follows analogously from the proof of Theorem \ref{mt1o}. For $i=1,$ the claim has been proven in Proposition \ref{1forms}(3).
\par Suppose then $i=2$, and write $G_{2,j}\in\kK_{2,j}$ as
$$G_{2,j}(\T,\X)=T_0^2\cG^0_j(\X)+T_1^2\cG^1_j(\X)+T_0T_1\cG^{*}_j(\X),
$$
Recall the notation we introduced in \eqref{suicigia} and write
$$\begin{array}{ccl}
\cG^0_j(\X) F_{2,1}(\T,\X)-\cF^0_1(\X) G_{2,j}(\T,\X)=T_1\,H_{1,j+1}(\T,\X),\\
\cG^0_j(\X) F_{2,k-1}(\T,\X)-\cM^0_{k-1}(\X) G_{2,j}(\T,\X)=T_1\,H^*_{1,j+k-1}(\T,\X),
\end{array}
$$
so we get
\begin{equation}\label{soupy}
\cM^0_{k-1}(\X)\cG^0_j(\X)F_{2,1}(\T,\X)-\cF^0_1(\X)\cG^0_j(\X) F_{2,k-1}(\T,\X)=T_1\, H^{**}_{1,j+k}(\T,\X)
\end{equation}
with $H_{1,j+1}(\T,\X),\,H^*_{1,j+k-1}(\T,\X),\,H^{**}_{1,j+1}(\T,\X)\in\kK_{1,*}$. By Proposition \ref{1forms}(3), we know that $\kK_{1,*}$ is generated by $\langle F^0_{1,k}(\T,\X),\,F^1_{1,k}(\T,\X)\rangle,$ so we have
$$\begin{array}{lcl}
H_{1,j+1}(\T,\X)&=&\alpha_{j-k+1}(\X)F^0_{1,k}(\T,\X)+\beta_{j-k+1}(\X)F^1_{1,k}(\T,\X),\\
H^{*}_{1,j+k-1}(\T,\X)&=&\alpha^*_{j-1}(\X)F^0_{1,k}(\T,\X)+\beta_{j-1}(\X)F^1_{1,k}(\T,\X)\\
H^{**}_{1,j+k}(\T,\X)&=&\alpha^{**}_{j}(\X)F^0_{1,k}(\T,\X)+\beta^{**}_{j}(\X)F^1_{1,k}(\T,\X).
\end{array}
$$
Note that
\begin{equation}\label{pua}
\alpha^{**}_{j}(\X)=\cM^0_{k-1}(\X)\,\alpha_{j-k+1}(\X)-\cF^0_1(\X)\,\alpha^*_{j-1}(\X).
\end{equation}
From \eqref{form1}, we deduce
$$\cG^0_j(\X)(\cM^0_{k-1}(\X)F_{2,1}(\T,\X)-\cF^0_1(\X)F_{2,k-1}(\T,\X))=T_1\,\cG^0_j(\X)F^0_{1,k}(\T,\X).$$
By substracting this identity from \eqref{soupy}, and using the obvious fact that $F^0_{1,k}(\T,\X)$ and $F^1_{1,k}(\T,\X)$ are $\K[\X]$-linearly independent, we deduce that
\begin{equation}\label{soso}
\cG^0_j(\X)=\alpha^{**}_{j}(\X)=\cM^0_{k-1}(\X)\,\alpha_{j-k+1}(\X)-\cF^0_1(\X)\,\alpha^*_{j-1}(\X),
\end{equation}
the last equality is \eqref{pua}.
So, by setting $$\tilde{G}_{2,j}(\T,\X):=G_{2,j}(\T,\X)-\alpha_{j-k+1}(\X)F_{2,k-1}(\T,\X)+\alpha^*_{j-1}(\X) F_{2,1}(\T,\X),$$
due to \eqref{soso}, we easily deduce that $\tilde{G}_{2,j}=T_1G^*_{1,j}(\T,\X)$, with $G^*_{1,j}(\T,\X)\in\kK_{1,j}.$ Again by Proposition \ref{1forms}(3), it turns out that
$G^*_{1,j}(\T,\X)\in\langle F^0_{1,k}(\T,\X),\,F^1_{1,k}(\T,\X)\rangle$ and hence $G_{2,j}(\T,\X)\in\langle F^0_{1,k}(\T,\X),\,F^1_{1,k}(\T,\X),\,F_{2,1}(\T,\X),\,F_{2,k-1}(\T,\X)\rangle$
which proves the claim for $i=2.$
\par\smallskip
If $i\geq2$ we proceed exactly as in the proof of Theorem \ref{mt1o}, and we only have to verify that $\cD_X(F^0_{1,k}(\T,\X))$ and $\cD_X(F^1_{1,k}(\T,\X)$ belong to the ideal generated by ${\mathrm F}_e.$ But this follows immediately from Proposition \ref{1forms} (4). This completes the proof of the Theorem
\end{proof}

\setlength{\unitlength}{1mm}
\begin{picture}(68,68)
\put(0,0){\vector(1,0){66}}
 \put(67,0){$i$}
 \put(0,0){\vector(0,1){66}}
 \put(0,67){$j$}
\put(6,3){\circle*{0.7} }
\put(7,4){\tiny{$(2,1)$}}

\put(56,3){\circle*{0.7} }
 \put(57,4){\tiny{$(2k-2,1)$}}
 \put(50,6){\circle*{0.7} }
 \put(51,7){\tiny{$(2k-4,2)$}}
  \put(44,9){\circle*{0.7} }
 \put(38,12){\circle*{0.7} }
  \put(32,15){\circle*{0.7} }
 \multiput(50,3)(2,0){3}{\line(1,0){0.5}}
 \multiput(50,3)(0,1){2}{\line(0,1){0.5}}
 \multiput(44,6)(2,0){3}{\line(1,0){0.5}}
 \multiput(44,6)(0,1){2}{\line(0,1){0.5}}

 \put(6,28){\circle*{0.7}}
\put(7,29){\tiny{$(2,k-1)$}}

\put(3,31){\circle*{0.7}}
\put(4,31){\circle*{0.7}}
\put(5,32){\tiny{$(1,k)$}}

\put(12,25){\circle*{0.7}}
\multiput(6,25)(2,0){3}{\line(1,0){0.5}}
\multiput(6,25)(0,1){2}{\line(0,1){0.5}}
\put(0,62){\circle*{0.7}}
\put(1,63){\tiny{$(0,2k)$}}
\end{picture}

\begin{example}
For $k\geq 3$, consider
\[u_0(T_0,T_1)=T_0^{2k},\, u_1(T_0,T_1)=T_0^{2k-2}(T_1^2+T_0T_1),\, u_2(T_0,T_1)=T_1^{2k-2}(T_1^2+T_0T_1).\]
 These polynomials parametrize properly a curve of degree $2k$ with $\mu =2$ and
 \[ (T_1^2+T_oT_1)X_0-T_0^ 2X_1,\qquad  T_1^{2k-2}X_1-T_0^{2k-2}X_2\] as $\mu $ basis. Indeed, by computing the implicit equation, we get
$$\cE_{2k}(\X)=X_1^{2k}-\frac{1}{2^{2k-3}}\left(\sum_{j=0}^{k-1}{2k-2\choose 2j}X_0^{2k-2j-2}(X_0^2+4X_0X_1)^j\right)X_1X_2+X_0^{2k-2}X_2^2.
$$  
\end{example}

\bigskip

\bigskip
\section{Adjoints}\label{adjoints}
We now turn our attention to geometric features of elements in $\kK_{1,*}.$  Recall that a curve $\tilde{\C}$ is adjoint to $\C$ if for any point ${\bf p}\in\C$, including ``virtual points'', we have
\begin{equation}\label{isi}
m_{\bf p}(\tilde{\C})\geq m_{\bf p}(\C)-1.
\end{equation}
Here, $m_p(\C)$ denotes the multiplicity of ${\bf p}$ with respect to $\C$.  Adjoint curves are of importance in computational algebra due to their use in the inverse of the implicitization problem, i.e. the so-called ``parametrization problem'', see \cite{SWP08} and the references therein. For a more geometric approach to adjoints, we refer the reader to \cite{cas00}.
\begin{definition}
A pencil of adjoints of  $\C$ of degree $\ell\in\N$ is an element $T_0{\mathcal C}^0_\ell(\X)+T_1{\mathcal C}^1_\ell(\X)\in\K[\T,\X],$ with ${\mathcal C}^i_\ell(\X)$ of degree $\ell$, defining a curve adjoint of $\C$, for $i=0,1.$ 
\end{definition}
For $\ell\in\Z_{\geq0},$ we denote with $\mbox{Adj}_\ell(\C)$ the $\K$-vector space of pencils of adjoints of $\C$ of degree $\ell$.
In \cite[Corollary $4.10$]{bus09}, it is shown that  if $\C$ has $\mu=2$ and only mild singularities, then both $\kK_{1,d-2}$ and $\kK_{1,d-1}$ are contained in $\mbox{Adj}_\ell(\C), \ \ell=d-2,\,d-1$ respectively.  We will show here that if $\C$ has $\mu=2$ and a very singular point, then $\mbox{Adj}_\ell(\C)\cap\kK_{1,\ell}$ is strictly contained in $\kK_{1,\ell}$ if
the later is not zero. We will also compute the dimension of these finite dimensional $\K$-vector spaces for a generic $\C$ to  measure the difference between them. 
\begin{lemma}\label{emma}
With the notation introduced in the previous section, for $i=k-1,\,k$ and $j=0,\,1,$ we have that
$$\begin{array}{l}
F_{1,i}(\T,\X)\in\langle X_0,X_1\rangle^{i-1}\setminus\langle X_0,\,X_1\rangle^i,\\
F^j_{1,k}(\T,\X),\in\langle X_0,X_1\rangle^{k-1}\setminus\langle X_0,\,X_1\rangle^k.
\end{array}$$
\end{lemma}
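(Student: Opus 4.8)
The plan is to rephrase the statement in terms of $X_2$-degrees. For a nonzero $\X$-homogeneous $G\in\K[\T][\X]$ of $\X$-degree $e$, write $G=\sum_i G_i(\T,X_0,X_1)X_2^i$ with $\deg_{(X_0,X_1)}G_i=e-i$. Since $\langle X_0,X_1\rangle^\ell$ is a monomial ideal and summands with distinct $X_2$-powers cannot cancel, $G\in\langle X_0,X_1\rangle^\ell$ is equivalent to $\deg_{X_2}G\le e-\ell$, and $G\notin\langle X_0,X_1\rangle^{\ell+1}$ is equivalent to $\deg_{X_2}G\ge e-\ell$. As $F_{1,k-1}$ has $\X$-degree $k-1$ and $F_{1,k},F^0_{1,k},F^1_{1,k}$ have $\X$-degree $k$, the lemma is exactly the assertion that all four of these forms have $X_2$-degree equal to $1$; I would prove the two inequalities $\le 1$ and $\ge 1$ separately.

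For the bound $\deg_{X_2}\le 1$ I would trace the recursive construction. The syzygy $Q_{d-\mu,1}=F_{d-\mu,1}$ is $\X$-linear and, by \eqref{riso}, its $X_2$-coefficient is $\pm q(\T)\ne 0$, so $\deg_{X_2}Q_{d-\mu,1}=1$. If $G=G_0+G_1X_2$ with $G_0,G_1$ free of $X_2$ and $\deg_\T G\ge 2\mu-1$, then applying $\cD_T$ to $G_0$ and $G_1$ separately and recombining yields a representative of $\cD_T(G)$ again of the form $(X_2\text{-free})+(X_2\text{-free})X_2$; hence $\cD_T$ never raises the $X_2$-degree. Iterating along the chain $F_{(k-j)\mu+r,j}=\cD_T^{\,j-1}(Q_{d-\mu,1})$ gives $\deg_{X_2}\le 1$ for $F_{1,k-1}$ and, using this recursive representative, for $F_{2,k-1}$. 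For the odd-case $F_{1,k}$ of \eqref{ddef} one takes $G_{1,1},H_{1,1}\in\K[\T,X_0,X_1]$, which is legitimate because $F_{2,1}=P_{\mu,1}$ is free of $X_2$ (cf. the proof of Lemma~\ref{k-1}); then $\deg_{X_2}F_{1,k}\le\max_i\deg_{X_2}\cF^i_{k-1}=\deg_{X_2}F_{1,k-1}\le 1$. For the even-case $F^i_{1,k}$ of \eqref{form1}, in \eqref{suicigia} the forms $\cF^i_1$ are free of $X_2$ and the forms $\cM^i_{k-1}$ satisfy $\deg_{X_2}\cM^i_{k-1}\le\deg_{X_2}F_{2,k-1}\le 1$, so expanding the right-hand sides of \eqref{form1} gives $\deg_{X_2}(T_iF^i_{1,k})\le 1$.

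For the bound $\deg_{X_2}\ge 1$ I would use in each case that a resultant or Sylvester-type specialization of the form in question equals $\pm\cE_d(\X)$, which by \eqref{cucco} has $X_2$-degree $\mu=2$ precisely because $\C$ has a very singular point. Indeed, $\mbox{Res}_\T(F_{2,1},F_{1,k-1})=\pm\cE_d$ by Theorem~\ref{mtv}(2), and were $F_{1,k-1}$ free of $X_2$ this resultant would be free of $X_2$ (so is $F_{2,1}$), a contradiction. For the odd case, $F_{1,k}(\cF^0_{k-1},\cF^1_{k-1},\X)=\pm\cE_d$ by Lemma~\ref{loma}; were $F_{1,k}$ free of $X_2$, substituting $T_i\mapsto\cF^i_{k-1}(\X)$ (each of $X_2$-degree $\le 1$) into the $\T$-linear $F_{1,k}$ would produce something of $X_2$-degree $\le 1<2$. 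For the even case, $\mbox{Res}_\T(F^0_{1,k},F^1_{1,k})=\pm\cE_d$ by Proposition~\ref{1forms}(2), a determinant bilinear in the (linear-in-$\T$) coefficients of the two forms, so if either of $F^0_{1,k},F^1_{1,k}$ were free of $X_2$ the resultant would have $X_2$-degree $\le 1<2$; hence both have $X_2$-degree $\ge 1$. Combining the two bounds settles all four cases.

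The point requiring care is that $\cD_T$, hence each chain element $F_{(k-j)\mu+r,j}$, is defined only modulo $P_{\mu,1}$, and $P_{\mu,1}$ is itself free of $X_2$, so adding a multiple of $P_{\mu,1}$ can raise the $X_2$-degree. The remedy is to note that the forms actually feeding into the lemma, namely $F_{1,k-1}$ and the $\T$-degree-two form $F_{2,k-1}$, are the ones produced by the explicit $X_2$-graded recursion ($F_{1,k-1}$ being in fact unambiguous, since no nonzero multiple of $P_{\mu,1}$ has $\T$-degree $1$), and that it is this same representative of $F_{2,k-1}$ that is used in \eqref{suicigia}--\eqref{form1}. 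Once that is pinned down, the rest is bookkeeping of degrees.
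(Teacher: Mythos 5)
Your proof is correct and follows the same two-step architecture as the paper's: the containment $F\in\langle X_0,X_1\rangle^{i-1}$ comes from tracking the recursion by $\cD_T$, and the non-containment comes from observing that membership in the next power would force the form to be free of $X_2$ (your quantitative reformulation via $\deg_{X_2}$ is exactly this observation, made symmetric), which is then contradicted by an identity producing a nonzero multiple of $\cE_d(\X)$, whose $X_2$-degree is $2$ by \eqref{cucco}. Where you diverge is in the choice of that identity: the paper uses $\mbox{Res}_\T(\cdot,F_{2,1}(\T,\X))$ in all four cases, and for $F^j_{1,k}$ it merely asserts via ``an explicit computation'' that this resultant is $\pm\cE_d(\X)\,{\mathcal L}^j_1(\X)$ with ${\mathcal L}^j_1(\X)\neq0$; you instead invoke the evaluation $F_{1,k}(\cF^0_{k-1}(\X),\cF^1_{k-1}(\X),\X)=\pm\cE_d(\X)$ from Lemma \ref{loma} in the odd case and $\mbox{Res}_\T(F^0_{1,k},F^1_{1,k})=\cE_d(\X)$ from Proposition \ref{1forms}(2) in the even case, both of which are already proved in the text, so your route closes a small gap the paper leaves open. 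You are also more careful than the paper about the fact that the intermediate forms are only defined modulo $P_{2,1}(\T,\X)$ (which, being free of $X_2$, could in principle spoil the $X_2$-degree bound), pinning down which representative of $F_{2,k-1}$ feeds into \eqref{suicigia}--\eqref{form1}; the paper silently works with the recursive representative throughout. The only thing I would add is a sentence verifying that the coefficient-wise decomposition needed to make $\cD_T$ respect the $X_2$-grading exists, i.e.\ that $G_0$ and $G_1$ can each be written in terms of $p^0_2(\T),p^1_2(\T)$ with $X_2$-free cofactors, which follows from Proposition \ref{lem2} applied degree by degree in $\X$.
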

\begin{proof}
 The operator ${\mathcal D}_T$ from Definition \ref{DT}, when applied to a polynomial in $\langle X_0,\,X_1\rangle^\ell$, has its image in $\langle X_0,\,X_1\rangle^{\ell+1}.$ From here, it is easy to deduce that $F_{1,k-1}(\T,\X)\in\langle X_0,\, X_1\rangle^{k-2}.$ If it actually belonged to
$\langle X_0,\,X_1\rangle^{k-1},$ then it would not depend on $X_2.$ But as
$$\mbox{Res}_\T\big(F_{2,1}(\T,\X),\,F_{1,k-1}(\T,\X)\big)=\cE_d(\X)$$
and $F_{2,1}(\T,\X)$ does not depend on $X_2,$ we would then have that $\cE_d(\X)\in\K[X_0, X_1]$, which is a contradiction with the irreducibility of this polynomial. The same argument holds for $F_{1,k}(\T,\X)$ by noting now that
$$\mbox{Res}_\T\big(F_{2,1}(\T,\X),\,F_{1,k}(\T,\X)\big)=\cE_d(\X)\,{\mathcal A}_2(\X),$$
with ${\mathcal A}_2(\X)\neq0.$
\par 
For the second part of the proof, we get that $F^j_{1,k}(\T,\X),\in\langle X_0,X_1\rangle^{k-1}$  for $j=0,\,1$ straightforwardly from the definition of these forms given in \eqref{form1}.  An explicit computation shows that also
$$\mbox{Res}_\t\big(F^j_{1,k}(\T,\X),\,F_{2,1}(\T, \X)\big)=\pm\cE_d(\X)\,{\mathcal L}^j_1(\X)
$$ with ${\mathcal L}^j_1(\X)\neq0,$ which proves that $F_{1,k}(\T,\X)$ has term which is linear in $X_2$.
\end{proof}
\smallskip
In the sequel, we set ${a\choose b}=0$ if $a<b.$ For a $\K[\X]$-graded module $M$ and an integer $\ell,$  we denote with $M_\ell$ the $\ell$-th graded piece of $M$. 
\begin{proposition}\label{topos} Let $\phi$ as in \eqref{param} be a proper parametrization of a curve $\C$ having $\mu=2$ and a very singular point. For $\ell\geq0,$
\begin{enumerate}
\item if $d=2k-1,$ then $\kK_{1,\ell}=\langle F_{1,k-1}(\T,\X)\rangle_\ell\oplus\langle F_{1,k}(\T,\X)\rangle_\ell$ and the dimension of this $\K$-vector space is ${\ell-k+3\choose 2}+{\ell-k+2\choose 2}.$ 
\item If $d=2k,$ then $\kK_{1,\ell}=\langle F^0_{1,k}(\T,\X)\rangle_\ell\oplus\langle F^1_{1,k}(\T,\X)\rangle_\ell,$ its $\K$-dimension being $2{\ell-k+2\choose 2}.$
\end{enumerate}
\end{proposition}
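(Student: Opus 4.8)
The plan is to reduce both statements to a single structural fact and then carry out a Hilbert-function count. Namely, in each parity I will show that the two displayed forms are a free $\K[\X]$-basis of the $\K[\X]$-module $\kK_{1,\ast}=\bigoplus_{\ell\geq0}\kK_{1,\ell}$. Granting this, the dimension formulas are immediate: multiplication by a fixed nonzero element of the domain $\K[\T,\X]$ is injective, so for a form $F$ of $\X$-degree $e$ the graded piece $\langle F\rangle_\ell$ has dimension $\dim_\K\K[\X]_{\ell-e}=\binom{\ell-e+2}{2}$ (with $\binom{a}{b}=0$ for $a<b$, as fixed above); adding the two contributions gives $\binom{\ell-k+3}{2}+\binom{\ell-k+2}{2}$ when $d=2k-1$, where the generators have $\X$-degrees $k-1$ and $k$, and $2\binom{\ell-k+2}{2}$ when $d=2k$, where both generators have $\X$-degree $k$.

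For $d=2k$ the module statement requires no new work: Proposition \ref{1forms}(3) already asserts that $\{F^0_{1,k}(\T,\X),F^1_{1,k}(\T,\X)\}$ is a $\K[\X]$-basis of $\kK_{1,\ast}$, and the $\K[\X]$-linear independence that is part of that statement makes the sum $\langle F^0_{1,k}\rangle_\ell\oplus\langle F^1_{1,k}\rangle_\ell$ direct.

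For $d=2k-1$ I will prove the two halves of the basis statement. For $\K[\X]$-linear independence: writing $F_{1,k-1}(\T,\X)=T_0\cF^1_{k-1}(\X)-T_1\cF^0_{k-1}(\X)$ and, since $\kK_{1,k}$ consists of $\T$-linear forms, $F_{1,k}(\T,\X)=T_0\widetilde{\cF}^1_k(\X)-T_1\widetilde{\cF}^0_k(\X)$, one has $\mbox{Res}_\T(F_{1,k-1},F_{1,k})=\pm F_{1,k}\big(\cF^0_{k-1}(\X),\cF^1_{k-1}(\X),\X\big)=\pm\cE_d(\X)\neq0$, the middle evaluation being exactly the one computed in the proof of Lemma \ref{loma}; then a relation $A(\X)F_{1,k-1}+B(\X)F_{1,k}=0$, read off on the coefficients of $T_0$ and of $T_1$ and eliminated, forces $A(\X)\cdot\mbox{Res}_\T(F_{1,k-1},F_{1,k})=0$, hence $A=0$ and then $B=0$ — the same device used for Proposition \ref{1forms}(3). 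For $\K[\X]$-generation: the case $i=1$ of the proof of Theorem \ref{mt1o} shows that every $G_{1,j}\in\kK_{1,j}$ satisfies $G_{1,j}(\T,\X)-{\mathcal A}_{j-k}(\X)F_{1,k}(\T,\X)\in\kK_{1,j}\cap\langle F_{1,k-1}(\T,\X)\rangle$ with ${\mathcal A}_{j-k}\in\K[\X]_{j-k}$; since $F_{1,k-1}$ has $\T$-degree $1$, any element of $\langle F_{1,k-1}\rangle$ of bidegree $(1,j)$ is of the form $H_{j-k+1}(\X)F_{1,k-1}(\T,\X)$ with $H_{j-k+1}\in\K[\X]_{j-k+1}$, so $G_{1,j}=H_{j-k+1}(\X)F_{1,k-1}+{\mathcal A}_{j-k}(\X)F_{1,k}$. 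Combining the two halves, $\{F_{1,k-1},F_{1,k}\}$ is a $\K[\X]$-basis of $\kK_{1,\ast}$, the sum is direct, and the bidegree $(1,\ell)$ part is $\langle F_{1,k-1}\rangle_\ell\oplus\langle F_{1,k}\rangle_\ell$ as desired.

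The only delicate point is the generation step in the odd case: it is essentially already recorded inside the proof of Theorem \ref{mt1o}, and the work is to notice that the multiplier of $F_{1,k-1}$ there is forced into $\K[\X]$ purely by the $\T$-degree bookkeeping ($F_{1,k-1}$ having $\T$-degree exactly $1$), and to keep track of signs so that $F_{1,k}\big(\cF^0_{k-1}(\X),\cF^1_{k-1}(\X),\X\big)$ is literally $\pm\cE_d(\X)$. Everything else is the Hilbert-function bookkeeping for $\K[X_0,X_1,X_2]$.
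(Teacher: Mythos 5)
Your proof is correct and follows essentially the same route as the paper's: both reduce the statement to the fact that the two displayed forms are a free $\K[\X]$-basis of $\kK_{1,\ast}$ (via Proposition \ref{1forms}(3) in the even case, and via Lemma \ref{loma} together with the $i=1$ step of the proof of Theorem \ref{mt1o} in the odd case), and then count dimensions of the graded pieces of the two principal submodules. The only difference is that you spell out the independence and generation details that the paper dispatches with ``we easily deduce''.
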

\begin{proof}
Suppose first $d=2k-1$.
From the statement of Theorem \ref{mt1o}, we have that $\kK_{1,*}=\langle F_{1,k-1}(\T,\X),\,F_{1,k}(\T,\X)\rangle_{\K[\X]}.$ Moreover, from Lemma \ref{loma} and the proof of Theorem \ref{mt1o}, we easily deduce that
$$\langle F_{1,k-1}(\T,\X),\,F_{1,k}(\T,\X)\rangle_\ell=\langle F_{1,k-1}(\T,\X)\rangle_\ell\oplus\langle F_{1,k}(\T,\X)\rangle_\ell
$$
for any $\ell\geq0.$ From here, the claim follows straightforwardly by computing dimensions in each of the subspaces involved in the last equality. The case $d=2k$ follows analogously, using now Proposition \ref{1forms} (3).
\end{proof}
\begin{theorem}\label{ttt}
Let $\phi$ as in \eqref{param} be a proper parametrization of a curve $\C$ having $\mu=2$ and a very singular point. For any $\ell\geq0,$ 
\begin{itemize}
\item If $d=2k-1,$ then 
$$\dim_\K\left(\mbox{Adj}_\ell(\C)\cap\kK_{1,\ell}\right)\leq \left\{\begin{array}{cl}
0&\,\mbox{if}\ \ell<2k-3,\\
\ell(\ell-2k+4)& otherwise
\end{array}
\right.$$
\item If $d=2k,$ then $$\dim_\K\left(\mbox{Adj}_\ell(\C)\cap\kK_{1,\ell}\right)\leq \left\{\begin{array}{cl}
0&\,\mbox{if}\ \ell<2k-2,\\
\ell(\ell-2k+3)& otherwise
\end{array}
\right.$$
\end{itemize}
For a generic curve $\C$ with $\mu=2$ and a very singular point, the equality actually holds.
\end{theorem}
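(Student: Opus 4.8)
The plan is to reduce the adjoint requirement to a single condition at the very singular point, and then to compute the dimension of the resulting subspace of $\kK_{1,\ell}$ using the explicit description of $\kK_{1,\ast}$ in Proposition~\ref{topos} together with the $\langle X_0,X_1\rangle$-adic order bounds of Lemma~\ref{emma}. After a linear change of coordinates we may take the very singular point to be ${\bf p}=(0:0:1)$. By \eqref{riso} its preimage under $\phi$ is the zero set of a binary form $q(\T)$ of degree $d-\mu=d-2$, and since $\phi$ is proper each root of $q$ of multiplicity $e$ contributes a branch of $\C$ at ${\bf p}$ of multiplicity $e$, so $m_{\bf p}(\C)=d-2$. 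Now let $T_0\mathcal C^0_\ell+T_1\mathcal C^1_\ell\in\kK_{1,\ell}$ be any pencil. By Proposition~\ref{ssing} both $\mathcal C^0_\ell$ and $\mathcal C^1_\ell$ vanish on $\mbox{Sing}(\C)$; hence \eqref{isi} is automatic at the smooth points and at every double point of $\C$, and the only condition that survives at ${\bf p}$ is that each $\mathcal C^i_\ell$ have a zero of multiplicity at least $d-3$ at ${\bf p}$, i.e.\ $\mathcal C^i_\ell\in\langle X_0,X_1\rangle^{d-3}$. Writing
$$V_\ell:=\bigl\{G\in\kK_{1,\ell}:\ G\in\langle X_0,X_1\rangle^{d-3}\,\K[\T,\X]\bigr\},$$
we therefore have $\mbox{Adj}_\ell(\C)\cap\kK_{1,\ell}\subseteq V_\ell$ for every such $\C$, with equality as soon as all singularities of $\C$ off ${\bf p}$ are ordinary double points and ${\bf p}$ is an ordinary $(d-2)$-fold point. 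By \eqref{riso} this holds for a generic $\C$: then $q$ is a general binary form of degree $d-2$, so ${\bf p}$ has $d-2$ distinct branches with distinct tangents and no infinitely near singularity, and the residual $\delta$-invariant $\binom{d-1}{2}-\binom{d-2}{2}=d-2$ is realized by $d-2$ ordinary nodes.

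Next I bound $\dim V_\ell$, treating $d=2k-1$ (the case $d=2k$ is identical with $F^0_{1,k},F^1_{1,k}$ and Proposition~\ref{1forms} in place of $F_{1,k-1},F_{1,k}$ and Lemmas~\ref{loma}--\ref{k-1}). By Proposition~\ref{topos}(1) each $G\in\kK_{1,\ell}$ is uniquely $G=A\,F_{1,k-1}+B\,F_{1,k}$ with $A\in\K[\X]_{\ell-k+1}$, $B\in\K[\X]_{\ell-k}$; let $\pi(G):=B$. Write $F_{1,k-1}=T_0\cF^1_{k-1}-T_1\cF^0_{k-1}$. Substituting $(T_0,T_1)\mapsto(\cF^0_{k-1}(\X),\cF^1_{k-1}(\X))$ in $G$, and using that $F_{1,k-1}$ vanishes while $F_{1,k}$ becomes $\pm\cE_d$ under this substitution (Lemma~\ref{loma}), one gets for the members $\mathcal C^0,\mathcal C^1$ of $G$ the identity
$$\cF^0_{k-1}(\X)\,\mathcal C^0+\cF^1_{k-1}(\X)\,\mathcal C^1=\pm\,\cE_d(\X)\,B.$$
Suppose $G\in V_\ell$, so $\mathcal C^0,\mathcal C^1\in\langle X_0,X_1\rangle^{2k-4}$. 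Since $\cF^0_{k-1},\cF^1_{k-1}\in\langle X_0,X_1\rangle^{k-2}$ by Lemma~\ref{emma} and $\cE_d$ has $\langle X_0,X_1\rangle$-adic order exactly $2k-3$ (its lowest term being $\cE^2_{2k-3}(X_0,X_1)\,X_2^2\neq0$, by \eqref{cucco}), comparing orders in the associated graded ring of $\K[\X]$ for the $\langle X_0,X_1\rangle$-adic filtration --- a polynomial ring, hence a domain --- forces $B\in\langle X_0,X_1\rangle^{k-3}$. Thus $\pi(V_\ell)\subseteq\bigl(\langle X_0,X_1\rangle^{k-3}\bigr)_{\ell-k}$, of dimension $\binom{\ell-k+2}{2}-\binom{k-2}{2}$. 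Moreover $\ker(\pi|_{V_\ell})$ consists of the pencils $A\,F_{1,k-1}$ lying in $\langle X_0,X_1\rangle^{2k-4}\K[\T,\X]$; since $F_{1,k-1}$ has order exactly $k-2$ (Lemma~\ref{emma}) these are precisely the $A\,F_{1,k-1}$ with $A\in\langle X_0,X_1\rangle^{k-2}$, a space of dimension $\binom{\ell-k+3}{2}-\binom{k-1}{2}$. Adding the two and using $\binom{m+1}{2}+\binom{m}{2}=m^2$,
$$\dim V_\ell\ \le\ \Bigl(\binom{\ell-k+3}{2}-\binom{k-1}{2}\Bigr)+\Bigl(\binom{\ell-k+2}{2}-\binom{k-2}{2}\Bigr)=(\ell-k+2)^2-(k-2)^2=\ell(\ell-2k+4);$$
for $\ell\le 2k-4$ both of these spaces vanish on degree grounds, so $V_\ell=0$. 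In the same way, for $d=2k$ one finds $\dim V_\ell\le2\bigl(\binom{\ell-k+2}{2}-\binom{k-1}{2}\bigr)=\ell(\ell-2k+3)$, and $0$ for $\ell\le 2k-3$. Together with the inclusion of the first paragraph these are the claimed upper bounds for every such $\C$.

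It remains to show that for a generic $\C$ the bound is attained (equality $\mbox{Adj}_\ell(\C)\cap\kK_{1,\ell}=V_\ell$ having been noted above). The kernel of $\pi|_{V_\ell}$ is already full: for any $A\in\langle X_0,X_1\rangle^{k-2}$ the members $A\cF^i_{k-1}$ of $A\,F_{1,k-1}$ lie in $\langle X_0,X_1\rangle^{2k-4}$, so they have a zero of multiplicity $\ge d-3$ at ${\bf p}$, and they vanish on the nodes since $\cF^i_{k-1}$ does (Proposition~\ref{ssing} applied to $F_{1,k-1}\in\kK_{1,k-1}$); hence $A\,F_{1,k-1}$ is a pencil of adjoints. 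So it suffices to prove that $\pi$ maps $V_\ell$ onto $\bigl(\langle X_0,X_1\rangle^{k-3}\bigr)_{\ell-k}$. Given $B$ there, one has $\cE_d\in\langle\cF^0_{k-1},\cF^1_{k-1}\rangle$, obtained by expanding $\pm\cE_d=F_{1,k}(\cF^0_{k-1},\cF^1_{k-1},\X)$ through the definition \eqref{ddef} of $F_{1,k}$; hence $\cE_d B=\cF^0_{k-1}\gamma^0+\cF^1_{k-1}\gamma^1$ for some $\gamma^0,\gamma^1\in\K[\X]_\ell$. For a generic $\C$ the degree-$(k-2)$ leading forms of $\cF^0_{k-1}$ and $\cF^1_{k-1}$ have no common factor, and a B\'ezout-type argument in the spirit of Proposition~\ref{lem2}, carried out degree by degree along the $\langle X_0,X_1\rangle$-adic filtration, lets one replace $(\gamma^0,\gamma^1)$ by $(\gamma^0+\cF^1_{k-1}E,\ \gamma^1-\cF^0_{k-1}E)$ for a suitable $E$ so that both new components lie in $\langle X_0,X_1\rangle^{2k-4}$; the associated pencil then lies in $V_\ell$ and $\pi$ sends it to $B$. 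This yields $\dim V_\ell=\ell(\ell-2k+4)$ for generic $\C$, and symmetrically $\dim V_\ell=\ell(\ell-2k+3)$ when $d=2k$.

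The main obstacle is this last surjectivity step: one must solve a Sylvester/B\'ezout identity for the members $\mathcal C^0,\mathcal C^1$ subject to the prescribed order of vanishing at ${\bf p}$, and check that for a generic parametrization the relevant initial forms --- those of $\cF^0_{k-1},\cF^1_{k-1}$ and of $\cE_d$ --- are in general enough position for this to be possible. Everything that precedes it is either formal (the reduction through Proposition~\ref{ssing}) or an exercise in bookkeeping with the $\langle X_0,X_1\rangle$-adic orders already recorded in Lemma~\ref{emma} and \eqref{cucco}.
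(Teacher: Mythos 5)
Your overall strategy coincides with the paper's: reduce the adjoint conditions to the single requirement $\mathcal{C}^0_\ell,\,\mathcal{C}^1_\ell\in\langle X_0,X_1\rangle^{d-3}$ at the very singular point (with equality of the two spaces when all singularities are ordinary, hence for generic $\C$), and then count dimensions inside $\kK_{1,\ell}$ using Proposition \ref{topos} and the $\langle X_0,X_1\rangle$-adic orders of Lemma \ref{emma}. Your upper-bound computation via the projection $\pi(A\,F_{1,k-1}+B\,F_{1,k})=B$ is correct and in fact supplies a detail the paper leaves implicit, namely why $G\in V_\ell$ forces $B\in\langle X_0,X_1\rangle^{k-3}$ (and hence $A\in\langle X_0,X_1\rangle^{k-2}$): the paper simply asserts that the monomial multiples $\X^{\underline\alpha}F_{1,k-1}$, $\X^{\underline\beta}F_{1,k}$ with $\alpha_0+\alpha_1\geq k-2$, $\beta_0+\beta_1\geq k-3$ form a basis of ${\mathfrak Z}_\ell=V_\ell$, and your order argument in the associated graded ring is exactly what justifies the spanning half of that assertion.

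The one place where your write-up has a hole is the step you yourself flag as ``the main obstacle'': the surjectivity of $\pi|_{V_\ell}$ onto $\left(\langle X_0,X_1\rangle^{k-3}\right)_{\ell-k}$, for which you sketch a B\'ezout-type lifting of $(\gamma^0,\gamma^1)$ resting on an unverified genericity hypothesis (coprimality of the leading forms of $\cF^0_{k-1},\cF^1_{k-1}$). This detour is unnecessary: for any $B\in\left(\langle X_0,X_1\rangle^{k-3}\right)_{\ell-k}$ the element $B\,F_{1,k}$ already lies in $V_\ell$, because by Lemma \ref{emma} its $\langle X_0,X_1\rangle$-adic order is at least $(k-3)+(k-1)=2k-4=d-3$, and $\pi(B\,F_{1,k})=B$ by uniqueness of the decomposition in Proposition \ref{topos}. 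The same one-line observation handles the kernel, and in the even case both generators $F^0_{1,k},F^1_{1,k}$. In particular $\dim_\K V_\ell=\ell(\ell-2k+4)$ (resp.\ $\ell(\ell-2k+3)$) for \emph{every} curve in the family, not only generically; genericity is needed only where you first invoke it, to guarantee $\mbox{Adj}_\ell(\C)\cap\kK_{1,\ell}=V_\ell$ rather than a proper subspace. With that replacement your argument is complete and agrees with the paper's.
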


\begin{proof}
Suppose $d=2k-1$ with $k\geq3$ (otherwise there cannot be a point of multiplicity larger than $2$), and w.l.o.g. assume that $(0:0:1)$ is the point of multiplicity $d-2=2k-3$. 
Fix $\ell\geq0,$ and set
$${\mathfrak Z}_\ell=\langle x_0,\,x_1\rangle^{d-3}\cap\kK_{1,\ell}.$$
Due to \eqref{isi} applied to $p=(0:0:1)$, it turns out that $\mbox{Adj}_\ell(\C)\cap\kK_{1,\ell}\subset {\mathfrak Z}_\ell.$ Moreover, the equality holds for a generic curve with $\mu=2$ and $(0:0:1)$ being very singular. Indeed, such a curve has all its singularities of ordinary type (i.e. there are no ``virtual points''). For this class of curves it is easy to show that any nonzero element in ${\mathfrak Z}_\ell$ is a pencil of adjoints, as we already know that $(0:0:1)$ has the correct multiplicity, plus the fact that all the other singular points have multiplicity two thanks to Proposition \ref{kkk} (and are ordinary due to genericity). So, condition \eqref{isi} for these points is satisfied provided that the pencil vanish also at these points, and this follows from Proposition \ref{ssing}.

To compute the dimension of ${\mathfrak Z}_\ell$, Proposition \ref{topos} and Lemma \ref{emma}, implies that the set
$\{\X^{\underline\alpha}F_{1,k-1}(\T,\X),\,\X^{\underline\beta}F_{1,k}(\T,\X)\}$
with $|\underline\alpha|=\ell-k+1,\,\alpha_0+\alpha_1\geq k-2,\,
|\underline\beta|=\ell-k,\,\beta_0+\beta_1\geq k-3,$ is a basis of ${\mathfrak Z}_\ell.$ If $\ell<2k-3,$ the cardinality of this set is zero. Otherwise, it is  equal to
$$\sum_{j=k-2}^{\ell-k+1}(j+1)+\sum_{j=k-3}^{\ell-k}(j+1)=\ell(\ell-2k+4).
$$
The proof for $d=2k$ follows mutatis mutandis the argument above.
\end{proof}
\begin{remark}
Combining the dimensions computed in Proposition \ref{topos} and Theorem \ref{ttt}, we get that 
$$\dim\left(\kK_{1,\ell} / \mbox{Adj}_\ell(\C)\cap\kK_{1,\ell}\right)\geq\left\{\begin{array}{ll}
(k-2)^2&\,\mbox{if}\ d=2k-1\\
(k-1)(k-2)&\,\mbox{if}\ d=2k,
\end{array}
\right.
$$
with equality for $\ell\geq d-2$ and $\C$ generic in this family of curves. Note that the dimension of the quotient is independent of $\ell$ for $\ell\geq d-2.$
\end{remark}

\bigskip
\section{Curves with mild multiplicities}\label{nosing}
Now we turn to the case where there are no multiple points of multiplicity larger than $2$. In this case, a whole set of generators of $\kK$ has been given in \cite[Proposition $3.2$]{bus09}, and our contribution will be to show that this set is essentially minimal in the sense that there is only one element which can be removed from the list.

We start by recalling the construction of Bus\'e's generators. In order to do this, some tools from classical elimination theory of polynomials will be needed. As in the beginning, our $\mu$-basis will be supposed to be a fixed set of polynomials $\{P_{2,1}(\T,\X),\,Q_{d-2,1}(\T,\X)\}.$ Recall that in this situation, we now have
\begin{equation}\label{kko}
P_{2,1}(\T,\X)=T_0^2L^0_1(\X)+T_1^2L^1_1(\X)+T_0T_1L^*_1(\X),
\end{equation}
with $V_{\P^2}(L^0_1(\X),\,L^1_1(\X),\,L^*_1(\X))=\emptyset,$ in contrast with the previous case where this variety was the unique point in $\C$ having  multiplicity $d-2$ on the curve.

\subsection{Sylvester forms}
For $\bv=(v_0,\,v_1)\in\{(0,0),\,(1,0),\,(0,1)\},$ write
$$
\begin{array}{ccl}
P_{2,1}(\T,\X)&=&T_0^{1+v_0}P^{0,\bv}_{1-v_0,1}(\T,\X)+T_1^{1+v_1}P^{1,\bv}_{1-v_1,1}(\T,\X),\\
Q_{d-2,1}(\T,\X)&=&T_0^{1+v_0}Q^{0,\bv}_{d-3-v_0,1}(\T,\X)+T_1^{1+v_1}Q^{1,\bv}_{d-3-v_1,1}(\T,\X),
\end{array}
$$
and set
\begin{equation}\label{ssilv}
\Delta^\bv(\T,\X):=\left|
\begin{array}{cc}
P^{0,\bv}_{1-v_0,1}(\T,\X)&P^{1,\bv}_{1-v_1,1}(\T,\X)\\
Q^{0,\bv}_{d-3-v_0,1}(\T,\X)& Q^{1,\bv}_{d-3-v_1,1}(\T,\X)
\end{array}
\right|\in\K[\T,\X]_{d-2-|\bv|,2}.
\end{equation}
It is easy to see (see also \cite{bus09}) that these polynomials are elements of $\kK$,  well defined modulo $\kK_{*,1}.$ Note also that one has the following equality modulo $\kK_{*,1}$:
\begin{equation}\label{descarte}
\Delta^{(0,0)}(\T,\X)=T_0\Delta^{(1,0)}(\T,\X)=T_1\Delta^{(0,1)}(\T,\X),
\end{equation}
which essentially shows that these elements are not independent modulo $\kK$. These forms are called {\em Sylvester forms} in the literature, see for instance \cite{jou97, CHW08, bus09}.

\medskip
\subsection{Morley forms}
Now we will define more elements of $\kK$ of the form $\Delta_\bv(\T,\X),$ for $2\leq|\bv|\leq d-1.$ In order to do that, we first have to compute the {\em Morley form} of the polynomials $P_{2,1}(\T,\X)$ and $Q_{d-2,1}(\T,\X)$, as defined in \cite{jou97, bus09}, as follows: introduce a new set of variables $\S=S_0,\,S_1,$write
\begin{equation}\label{dolo}
\begin{array}{rcc}
P_{2,1}(\T,\X)-P_{2,1}(\S,\X)&=&P^0(\S,\T,\X)(T_0-S_0)+P^1(\S,\T,\X)(T_1-S_1)\\
Q_{d-2,1}(\T,\X)-Q_{d-2,1}(\S,\X)&=&Q^0(\S,\T,\X)(T_0-S_0)+Q^1(\S,\T,\X)(T_1-S_1)
\end{array}\, ,
\end{equation}
and define the {\em Morley form} of $P_{2,1}(\T,\X)$ and $Q_{d-2,1}(\T,\X)$ as
$$\mbox{Mor}(\S,\T,\X):=\left|
\begin{array}{cc}
P^0(\S,\T,\X)&P^{1}(\S,\T,\X)\\
Q^{0}(\S,\T,\X)& Q^{1}(\S,\T,\X)
\end{array}
\right|.
$$
Due to homogeneities, it is easy to see that we have the following monomial expansion of the Morley form:
\begin{equation}\label{morr}
\mbox{Mor}(\S,\T,\X)=\sum_{|\bv|\leq d-2} F^\bv_{d-2-|\bv|,2}(\T,\X)\,\S^\bv,
\end{equation}
with $F^{\bv}_{d-2-|\bv|,2}(\T,\X)\in\K[\T,\X]_{(d-2-|\bv|,2)}.$ It can also be shown (see for instance \cite{jou97} or \cite{bus09}) that the elements $F^{\bv}_{d-2-|\bv|,2}(\T,\X)$ are well defined modulo the ideal generated by $P_{2,1}(\T,\X)-P_{2,1}(\S,\X)$ and $Q_{d-2,1}(\T,\X)-Q_{d-2,1}(\S,\X).$

To define nontrivial elements in $\kK,$ we proceed as in \cite[Section 2.3]{bus09}:  Fix $i,\,1\leq i\leq d-2$ and let $\bM_i$ be the $(d-1-i)\times (d-2-i)$ matrix defined as follows
$$
\bM_i=
\left(\begin{array}{ccccc}
L^0_1(\X)&0&0&\ldots& F^{(d-2-i,0)}_{i,2}(\T,\X)\\
 L^*_1(\X)& L^0_1(\X)& 0&\ldots& F^{(d-3-i,1)}_{i,2}(\T,\X)\\
  L^1_1(\X)&L^*_1(\X)& L^0_1(\X)&\ldots& F^{(d-4-i,2)}_{i,2}(\T,\X)\\
\vdots&\ddots & \ddots& \dots &\vdots \\ \\
0&\ldots && L^1_1(\X)& F^{(0,d-2-i)}_{i,2}(\T,\X)
\end{array}
\right).
$$
By looking at the last column, we see that the rows of $\bM_i$ are indexed by monomials $\bv$ such that $|\bv|=d-2-i.$ For each of these monomials, we define
$\Delta^\bv_{i,d-1-i}(\T,\X)$ as the signed maximal minor of $\bM_i$ obtained by eliminating from this matrix the row indexed by $\bv.$ By looking at the homogeneities of
the columns of $\bM_i$, we easily get that $\Delta^\bv_{i,d-1-i}(\T,\X)\in\K[\T,\X]_{i,d-1-i}.$ Moreover

\begin{proposition}[Theorem $2.5$ in \cite{bus09}]
Each of the $\Delta^\bv_{i,d-1-i}(\T,\X)$ is independent of the choice of the decomposition \eqref{dolo} modulo $\langle P_{2,1}(\T,\X),\,Q_{d-2,1}(\T,\X)\rangle$, and belongs to $\kK.$
\end{proposition}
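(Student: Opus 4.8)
The plan is to derive both assertions from the standard theory of Morley forms of Jouanolou \cite{jou97}, as recalled in \cite{bus09}, reproducing only the two ingredients that actually carry the argument: a multilinearity identity for the well-definedness, and the primeness of $\kK$ for the membership.

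I would first treat independence of the choice made in \eqref{dolo}. That decomposition is unique only up to replacing $(P^0,P^1)$ by $\bigl(P^0+(T_1-S_1)\,C,\ P^1-(T_0-S_0)\,C\bigr)$ and $(Q^0,Q^1)$ by $\bigl(Q^0+(T_1-S_1)\,D,\ Q^1-(T_0-S_0)\,D\bigr)$, where homogeneity forces $C$ to have $(\S,\T)$-degree $0$ (so $C\in\K[\X]$) and $D\in\K[\S,\T,\X]$ to have $(\S,\T)$-degree $d-4$. Expanding $\mbox{Mor}(\S,\T,\X)$ bilinearly in its two rows and using
$$\det\left(\begin{array}{cc}T_1-S_1 & -(T_0-S_0)\\ P^0 & P^1\end{array}\right)=P^0(T_0-S_0)+P^1(T_1-S_1)=P_{2,1}(\T,\X)-P_{2,1}(\S,\X),$$
together with the analogous identity for the $Q$-row, one sees that $\mbox{Mor}$ changes only by the element $C\,\bigl(Q_{d-2,1}(\T,\X)-Q_{d-2,1}(\S,\X)\bigr)-D\,\bigl(P_{2,1}(\T,\X)-P_{2,1}(\S,\X)\bigr)$ of the ideal $\mathcal{J}:=\langle P_{2,1}(\T,\X)-P_{2,1}(\S,\X),\,Q_{d-2,1}(\T,\X)-Q_{d-2,1}(\S,\X)\rangle$; this recovers the well-definedness of the $F^\bv_{d-2-|\bv|,2}(\T,\X)$ modulo $\mathcal{J}$ stated after \eqref{morr}. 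I would then extract the coefficient of $\S^\bv$ with $|\bv|=d-2-i$: since $C$ is free of $\S$ while $Q_{d-2,1}(\S,\X)$ is homogeneous of $\S$-degree $d-2$, the $C$-terms contribute nothing once $0<|\bv|<d-2$ (all the $\Delta^\bv_{i,d-1-i}$ under consideration have $1\leq|\bv|\leq d-3$), so the last column of $\bM_i$ is altered only by the $\S^\bv$-coefficients of $-D\,P_{2,1}(\T,\X)+D\,P_{2,1}(\S,\X)$. The first piece adds to that column $P_{2,1}(\T,\X)$ times a fixed vector over $\K[\T,\X]$, so $\Delta^\bv_{i,d-1-i}(\T,\X)$ changes by a $\K[\T,\X]$-multiple of $P_{2,1}(\T,\X)$; the second piece, by homogeneity, adds the degree-$(d-2-i)$ part in $\S$ of $D\cdot P_{2,1}(\S,\X)$, which is a $\K[\T,\X]$-combination of the band columns of $\bM_i$ (the coefficient vectors of the $m\,P_{2,1}(\S,\X)$ with $m$ a monomial of $\S$-degree $d-4-i$), and hence does not change the minor at all. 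The remaining small matrices $\bM_i$ with no band column are handled by the same computation in elementary form, cf. the observation following \eqref{descarte}. This yields the asserted well-definedness modulo $\langle P_{2,1}(\T,\X),\,Q_{d-2,1}(\T,\X)\rangle$.

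For the membership $\Delta^\bv_{i,d-1-i}(\T,\X)\in\kK$, I would use that $\kK=\ker(\Phi)$ is prime, since $\mbox{Rees}(I)$ is a subring of the domain $\K[\T][Z]$; that $\Phi(T_0)=T_0\neq 0$ and $\Phi(T_1)=T_1\neq 0$, so $T_0,T_1\notin\kK$; and that $P_{2,1}(\T,\X),\,Q_{d-2,1}(\T,\X)\in\kK_{*,1}$, so $\langle P_{2,1}(\T,\X),\,Q_{d-2,1}(\T,\X)\rangle\subseteq\kK$. It is then enough to show that each $\Delta^\bv_{i,d-1-i}(\T,\X)$ is an inertia form of the pair $\{P_{2,1}(\T,\X),\,Q_{d-2,1}(\T,\X)\}$ relative to $\langle T_0,T_1\rangle$, i.e. that $\langle T_0,T_1\rangle^N\,\Delta^\bv_{i,d-1-i}(\T,\X)\subseteq\langle P_{2,1}(\T,\X),\,Q_{d-2,1}(\T,\X)\rangle$ for some $N$; granting this, primeness of $\kK$ and $T_0\notin\kK$ force $\Delta^\bv_{i,d-1-i}(\T,\X)\in\kK$. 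That the maximal minors of the Morley matrices $\bM_i$ are inertia forms is exactly Jouanolou's theorem on Morley forms (\cite{jou97}, see also \cite{bus09}): the rows of $\bM_i$ encode, modulo $\mathcal{J}$, the reduction of $\S$-monomials against $P_{2,1}(\S,\X)$ paired with the Morley coefficients, and a Cramer/cofactor expansion exhibits $\langle T_0,T_1\rangle^N\Delta^\bv_{i,d-1-i}(\T,\X)$ as an explicit combination of $P_{2,1}(\T,\X)$ and $Q_{d-2,1}(\T,\X)$. I would record the base instance for orientation: for the Sylvester forms of \eqref{ssilv} one has, for example,
$$T_1\,\Delta^{(0,0)}(\T,\X)=P^{0,(0,0)}_{1,1}(\T,\X)\,Q_{d-2,1}(\T,\X)-Q^{0,(0,0)}_{d-3,1}(\T,\X)\,P_{2,1}(\T,\X),$$
and symmetrically with $T_0$, and then refer to loc. cit. for the general matrix $\bM_i$.

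The substantive obstacle is this last point: reproducing Jouanolou's verification that the $\bM_i$-minors annihilate the $\langle T_0,T_1\rangle$-torsion-free part of $\K[\T,\X]/\langle P_{2,1}(\T,\X),\,Q_{d-2,1}(\T,\X)\rangle$, which rests on the precise combinatorial shape of $\bM_i$ (rows indexed by $\bv$ with $|\bv|=d-2-i$, band built from the $L^j_1(\X)$'s) together with the defining property of the Morley form; keeping track of the ambiguities of the $F^\bv$'s through these non-square matrices is where the bookkeeping bites. Since the statement is literally \cite[Theorem 2.5]{bus09}, in the final write-up I would reproduce only the multilinearity identity and the primeness reduction above and cite loc. cit. for the remaining technical verification.
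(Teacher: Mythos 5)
The paper offers no proof of this proposition at all—it is quoted verbatim as Theorem $2.5$ of \cite{bus09}—so your reconstruction is, if anything, more explicit than the source text: the bilinearity computation showing that $\mbox{Mor}(\S,\T,\X)$ changes only by $C\,(Q_{d-2,1}(\T,\X)-Q_{d-2,1}(\S,\X))-D\,(P_{2,1}(\T,\X)-P_{2,1}(\S,\X))$ is correct, the observation that the resulting perturbation of the last column of $\bM_i$ is a multiple of $P_{2,1}(\T,\X)$ plus a $\K[\T,\X]$-combination of the band columns (hence leaves the maximal minors unchanged modulo $\langle P_{2,1},Q_{d-2,1}\rangle$) is correct, and the reduction of membership in $\kK$ to the inertia-form property via primeness of $\kK$ and $T_0,T_1\notin\kK$ is sound. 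You defer exactly the one genuinely technical step—that the maximal minors of the non-square Morley matrices $\bM_i$ are inertia forms of $\{P_{2,1},Q_{d-2,1}\}$—to Jouanolou and Bus\'e, which is the same citation the paper itself relies on for the entire statement, so this is an acceptable match; just note that a self-contained proof would still require that verification.
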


\smallskip
In connection with the matrices $\bM_i$ defined above, we recall here the  matrix construction for the resultant given in \cite[$3.11.19.7$]{jou97}. For a fixed, $i,\,1\leq i\leq d-4,$ we set
${\mathbf M}_i$ the $(d-2)\times (d-2)$ square matrix, defined as follows:
\begin{equation}\label{morly}
{\mathbf M}_i=\left(\begin{array}{cc}
\bM_i(1)&{\mathbf{Mor}(i)}\\
{\bf 0}& \bM_{d-2-i}(1)^t
\end{array}
\right),
\end{equation}
where $\bM_j(1)$ is the submatrix of $\bM_j$ where we have eliminated the last column, and the matrix $\mathbf{Mor}(i)$ has its rows (resp. columns )indexed by all $\T$ monomials of total degree $d-2-i$ (resp. $i$), in such a way that the entry $\mathbf{Mor}(i)_{\bv,\bv'}$is equal to the coefficient of $\T^{\bv'}\S^{\bv}$ in $\mbox{ Mor}(\S,\T,\X)$ defined in \eqref{morr}. With this notation, we easily deduce that
\begin{equation}\label{kohinor}
F^\bv_{d-2-|\bv|,2}(\T,\X)=\sum_{|\bv'|=d-2-|\bv|}\mathbf{Mor}(i)_{\bv',\bv}\T^{\bv'}
\end{equation}
\begin{proposition}[Proposition $3.11.19.21$ in \cite{jou97}]\label{grr}
$$\big|{\mathbf M}_i\big|=\cE_d(\X).$$
\end{proposition}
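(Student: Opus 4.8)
The plan is to deduce the equality in two independent steps: first, that $\big|\mathbf{M}_i\big|$ is, up to sign, the resultant $\Res_\T\big(P_{2,1}(\T,\X),Q_{d-2,1}(\T,\X)\big)$ taken with respect to $\T=(T_0,T_1)$ over the coefficient ring $\K[\X]$; and second, that this resultant equals $\cE_d(\X)$ up to a nonzero constant of $\K$ — which is all one needs, since $\cE_d$ is itself defined only up to such a constant. The first step is precisely the resultant matrix formula of Jouanolou: the square block matrix $\mathbf{M}_i$ of \eqref{morly}, assembled from the two truncated ``Sylvester'' blocks $\bM_i(1)$ and $\bM_{d-2-i}(1)^t$ glued through the ``Morley'' block $\mathbf{Mor}(i)$, has determinant $\pm\Res_\T(P_{2,1},Q_{d-2,1})$; this is \cite[3.11.19.21]{jou97}. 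I would check that its hypotheses hold here, i.e. that $P_{2,1}$ and $Q_{d-2,1}$ are binary forms in $\T$ of degrees $2$ and $d-2$ with no common factor in $\K(\X)[\T]$ (the latter being verified below), and note that although the block $\mathbf{Mor}(i)$ depends on the choice of decomposition \eqref{dolo}, Jouanolou's identity holds for any admissible choice, so $\big|\mathbf{M}_i\big|$ is in fact independent of it.

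For the second step I would show $\Res_\T(P_{2,1},Q_{d-2,1})=\lambda\,\cE_d(\X)$ with $\lambda\in\K\setminus\{0\}$ via three observations. \emph{Nonvanishing:} write $P_{2,1}=p^0X_0+p^1X_1+p^2X_2$ and $Q_{d-2,1}=q^0X_0+q^1X_1+q^2X_2$, so that the triples $(p^0,p^1,p^2)$ and $(q^0,q^1,q^2)$ are the columns of the Hilbert--Burch matrix $\varphi$ of \eqref{sec}, and, up to sign and a common nonzero constant, each $u_i$ is the $i$-th maximal minor of $\varphi$. If $\Res_\T(P_{2,1},Q_{d-2,1})$ vanished identically, $P_{2,1}$ and $Q_{d-2,1}$ would share a factor of positive $\T$-degree in $\K(\X)[\T]$, hence, by Gauss's lemma, a primitive $h\in\K[\T,\X]$ of positive $\T$-degree dividing both in $\K[\T,\X]$; since $P_{2,1}$ and $Q_{d-2,1}$ are linear in $\X$, either $h\in\K[\T]$, in which case $h$ divides every entry of $\varphi$ and so $h^2$ divides every $u_i$, contradicting $\gcd(u_0,u_1,u_2)=1$, or $h$ is linear in $\X$, in which case $p^j=a(\T)h^j$ and $q^j=b(\T)h^j$ for the coefficient forms $h^j$ of $h$, making every $2\times 2$ minor of $\varphi$ vanish and hence $\u\equiv 0$, again absurd. \emph{Vanishing on $\C$:} since $P_{2,1},Q_{d-2,1}\in\kK_{*,1}=\mbox{Syz}(I)$ we have $P_{2,1}(\t,\u(\t))=Q_{d-2,1}(\t,\u(\t))=0$ for every $\t\in\P^1$, so the binary forms $P_{2,1}(\T,\u(\t))$ and $Q_{d-2,1}(\T,\u(\t))$ have $\T=\t$ as a common root, whence $\Res_\T(P_{2,1},Q_{d-2,1})$ vanishes at $\X=\u(\t)$; as $\t$ runs over $\P^1$ this is a dense subset of $\C$, so the irreducible equation $\cE_d$ of $\C$ divides $\Res_\T(P_{2,1},Q_{d-2,1})$. \emph{Degrees:} the resultant of a $\T$-form of degree $2$ and one of degree $d-2$ is homogeneous of degree $d-2$ in the coefficients of the first and $2$ in those of the second, and by \eqref{kko} together with $\bdeg(Q_{d-2,1})=(d-2,1)$ all these coefficients are linear forms in $\X$, so $\Res_\T(P_{2,1},Q_{d-2,1})$ has $\X$-degree $(d-2)\cdot 1+2\cdot 1=d=\deg\cE_d$. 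Combining the three, $\Res_\T(P_{2,1},Q_{d-2,1})=\lambda\,\cE_d(\X)$ with $\lambda\neq 0$, and then by the first step $\big|\mathbf{M}_i\big|=\pm\lambda\,\cE_d(\X)$, which is $\cE_d(\X)$ after renormalizing.

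The main obstacle is the first step. If one is content to invoke \cite[3.11.19.21]{jou97} it is immediate, but extracting it by hand — checking that this particular gluing of the two truncated Sylvester matrices with the Morley matrix $\mathbf{Mor}(i)$ has determinant \emph{exactly} equal to the resultant, with no spurious extraneous factor — needs the full apparatus of that part of \cite{jou97} and is exactly why the statement is quoted rather than reproved here. The second step, by contrast, is routine resultant bookkeeping, its only genuinely substantive ingredient being the nonvanishing, which the Hilbert--Burch identity settles at once.
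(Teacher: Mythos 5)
Your proposal is correct and matches what the paper actually does: the paper offers no proof beyond the citation to \cite[3.11.19.21]{jou97}, which is exactly your first step, and your second step supplies the standard identification $\mbox{Res}_\T\big(P_{2,1}(\T,\X),Q_{d-2,1}(\T,\X)\big)=\lambda\,\cE_d(\X)$, $\lambda\neq 0$, that the paper leaves implicit (nonvanishing via the Hilbert--Burch minors, divisibility by $\cE_d$ from vanishing along the parametrization, and the degree count $(d-2)+2=d$ are all sound, with properness guaranteeing $\deg\C=d$). Nothing further is needed.
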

\smallskip
To prove our main result, we will need the following technical lemma.
\begin{lemma}\label{sub}
Let $K$ be a field, $n,\, N\in\N$ and $\omega_0, \omega_1,\ldots,\omega_{n-2},\,\tau_1,\ldots,\tau_N\in K^n,$ such that
$\dim(\omega_0,\,\omega_1,\,\ldots,\omega_{n-2})=n-1$, and for each $j=1,\ldots, N,$
$$\dim_K\left(\omega_0,\,\omega_1,\,\ldots,\omega_{n-2},\,\tau_j\right)\leq n-1$$
(where $({\bf F})$ denotes the $K$-vector space generated by the sequence ${\bf F}$).
Then, for each $i,j,\,1\leq i,\,j\leq N,$ we have
$$\dim_K\left(\omega_1,\,\ldots,\omega_{n-2},\,\tau_i,\,\tau_j\right)\leq n-1.
$$
\end{lemma}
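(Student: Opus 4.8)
\textbf{Proof proposal for Lemma \ref{sub}.}

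The plan is to argue by contradiction. Suppose that for some pair $i,j$ (with $i\neq j$, the case $i=j$ being trivial) we have $\dim_K(\omega_1,\ldots,\omega_{n-2},\tau_i,\tau_j)=n$, i.e.\ the sequence $\omega_1,\ldots,\omega_{n-2},\tau_i,\tau_j$ is a basis of $K^n$. In particular $\omega_1,\ldots,\omega_{n-2},\tau_i$ are linearly independent, so they span an $(n-1)$-dimensional subspace $W$. Now I would use the two hypotheses to locate $\omega_0$. Since $\dim(\omega_0,\omega_1,\ldots,\omega_{n-2})=n-1$ while $\dim(\omega_1,\ldots,\omega_{n-2})\le n-2$, the vector $\omega_0$ does \emph{not} lie in the span of $\omega_1,\ldots,\omega_{n-2}$; hence $\omega_0,\omega_1,\ldots,\omega_{n-2}$ is another basis of the $(n-1)$-dimensional space $U:=(\omega_0,\ldots,\omega_{n-2})$.

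The key step is then to compare $U$ and $W$ using the constraint on $\tau_i$ and $\tau_j$. By hypothesis, $\dim(\omega_0,\omega_1,\ldots,\omega_{n-2},\tau_i)\le n-1$, so $\tau_i\in U$; likewise $\tau_j\in U$. But $\tau_i\in W$ as well (it is one of the spanning vectors of $W$), and $\omega_1,\ldots,\omega_{n-2}\in U\cap W$. Thus $U$ contains the $(n-1)$-dimensional space $W = (\omega_1,\ldots,\omega_{n-2},\tau_i)$, and since $\dim U=n-1$ we conclude $U=W$. Therefore $\tau_j\in U=W=(\omega_1,\ldots,\omega_{n-2},\tau_i)$, which contradicts the assumption that $\omega_1,\ldots,\omega_{n-2},\tau_i,\tau_j$ are linearly independent. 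This contradiction proves $\dim_K(\omega_1,\ldots,\omega_{n-2},\tau_i,\tau_j)\le n-1$.

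The argument is essentially a dimension-counting manipulation with subspaces, so there is no serious obstacle; the only point requiring a little care is the bookkeeping of which $(n-1)$-dimensional subspace is which. The one thing to check is that the hypothesis $\dim(\omega_0,\ldots,\omega_{n-2})=n-1$ is genuinely used: it is what pins $\tau_i$ and $\tau_j$ into a single common $(n-1)$-dimensional subspace $U$ (via $\tau_i,\tau_j\in U$), which is the crux of the collapse $U=W$. I would also note that $\tau_i$ and $\tau_j$ could themselves be linearly dependent with the $\omega$'s, but that only makes the dimension smaller, so the inequality still holds and no case analysis is needed. Finally, in writing this up for the paper I would phrase it directly rather than by contradiction if that reads more cleanly: from $\tau_i,\tau_j\in U$ and $\omega_1,\ldots,\omega_{n-2}\in U$ one gets $(\omega_1,\ldots,\omega_{n-2},\tau_i,\tau_j)\subseteq U$, and $\dim U=n-1$ gives the claim at once.
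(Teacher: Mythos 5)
Your proof is correct and is essentially the paper's argument: both rest on the observation that the hypotheses force $\tau_i$ and $\tau_j$ into the $(n-1)$-dimensional space $U=(\omega_0,\ldots,\omega_{n-2})$, so the span of $\omega_1,\ldots,\omega_{n-2},\tau_i,\tau_j$ sits inside $U$. The paper packages this via Grassmann's formula applied to $(\omega_0,\ldots,\omega_{n-2},\tau_i)$ and $(\omega_0,\ldots,\omega_{n-2},\tau_j)$, whereas your direct phrasing (noting $\tau_i,\tau_j\in U$ and concluding immediately) is, if anything, cleaner and dispenses with the unnecessary contradiction setup.
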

\begin{proof}
Suppose that the claim is false. Then,  we will have $\left(\omega_1,\,\ldots,\omega_{n-2},\,\tau_i,\,\tau_j\right)=K^ n$ for some $i,\,j$ and by applying Grassman's formula for computing the dimension of a sum of vector subspaces:
$$\begin{array}{ccl}
\dim_K\left(\omega_1,\,\ldots,\omega_{n-2},\,\tau_i,\,\tau_j\right)&\leq&
\dim_K\left(\omega_0,\,\omega_1,\,\ldots,\omega_{n-2},\,\tau_i\right)+\dim_K\left(\omega_0,\omega_1,\,\ldots,\omega_{n-2},\,\tau_j\right)\\
&&-\dim_K\left(\omega_0,\omega_1,\,\ldots,\omega_{n-2}\right)\leq 2(n-1)-(n-1)=n-1,
\end{array}
$$
a contradiction.
\end{proof}
\bigskip
\subsection{Minimal generators}
Now we are ready to present the main result of this section.
\begin{theorem}\label{mt2}
If $\mu=2$ and the curve $\C$ has all its singularities having multiplicity $2,$ then the following family of $\frac{(d+1)(d-4)}2+5$ polynomials
$$\{\cE_d,\, P_{2,1}(\T,\X),\,Q_{d-2,1}(\T,\X),\,\Delta^{(1,0)}(\T,\X),\,\Delta^{(0,1)}(\T,\X)\} \bigcup \{\Delta^\bv_{i,d-1-i}(\T,\X)\}_{1\leq i\leq d-4, |\bv|=d-2-i}
$$
is a minimal set of generators of $\kK$.
\end{theorem}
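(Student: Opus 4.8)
The strategy has two independent halves: first show the listed family generates $\kK$, and then show it is minimal, i.e. that removing any one member (save for the single redundant one already noted in the introduction) fails to generate $\kK$. The generation statement is essentially \cite[Proposition $3.2$]{bus09}, so the bulk of the work is the minimality argument; I will organize the proof around a careful bookkeeping of bidegrees and a Nakayama-type reduction, exactly as in the proofs of Theorems \ref{mt1o} and \ref{mt1e}.

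\emph{Step 1 (generation).} I would first invoke Bus\'e's result that the Sylvester forms $\Delta^{(1,0)},\Delta^{(0,1)}$ together with the Morley-form minors $\Delta^\bv_{i,d-1-i}$ (for $1\le i\le d-4$, $|\bv|=d-2-i$), the $\mu$-basis $\{P_{2,1},Q_{d-2,1}\}$, and $\cE_d$ generate $\kK$. By Theorem \ref{mu} one only needs generators with $\T$-degree strictly less than $d-\mu=d-2$ apart from the $\mu$-basis element $Q_{d-2,1}$ itself, and one checks that the family above already has this property: $\Delta^\bv_{i,d-1-i}$ has $\T$-degree $i\le d-4$, and $\Delta^{(1,0)},\Delta^{(0,1)}$ have $\T$-degree $d-3$. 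So generation reduces to citing \cite{bus09}. I would also record here, via \eqref{descarte}, that $\Delta^{(0,0)}$ is the one redundant element: it equals $T_0\Delta^{(1,0)}$ and $T_1\Delta^{(0,1)}$ modulo $\kK_{*,1}$, hence can be dropped.

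\emph{Step 2 (minimality via Nakayama).} As in Theorem \ref{mtv}(4), minimality amounts to showing the family is $\K$-linearly independent in $\kK/\mathfrak{M}\kK$, where $\mathfrak{M}=\langle\T,\X\rangle$. Most of the generators sit at distinct bidegrees or on a pseudo-homogeneous line, so the only possible relations in $\kK/\mathfrak{M}\kK$ are among generators sharing a bidegree. The delicate point is the strip of bidegree $(i,d-1-i)$: for each $i$ there are several forms $\Delta^\bv_{i,d-1-i}$ indexed by the monomials $\bv$ with $|\bv|=d-2-i$, and one must show no nontrivial $\K$-combination of these (together with products of lower generators by variables) lies in $\mathfrak M\kK$. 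This is where Lemma \ref{sub} and Proposition \ref{grr} enter: the determinant $|{\mathbf M}_i|=\cE_d(\X)$ is irreducible and nonzero, which forces the rows of the matrix $\bM_i$ — equivalently the vectors of coefficients of the $F^\bv_{i,2}(\T,\X)$ — to be in sufficiently general position. Concretely, the $\Delta^\bv_{i,d-1-i}$ are the maximal minors of $\bM_i$ obtained by deleting one row; a linear dependence among them modulo $\mathfrak M\kK$ (after reducing modulo the lower-degree relations $\Delta^\bv_{i+1,\ldots}$, which by the operator identities of Proposition \ref{tec1}-type arguments or direct expansion link consecutive strips) would produce a dimension collapse of the form forbidden by Lemma \ref{sub}, contradicting $|{\mathbf M}_i|=\cE_d\ne 0$. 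I would spell this out by translating "$\Delta^\bv_{i,d-1-i}\in\mathfrak M\kK$" into a statement about the rank of a submatrix of $\bM_i$ over $\K[\X]$, then use that the only relations are the "obvious" syzygies of the maximal minors of $\bM_i$, which are themselves of higher $\X$-degree and hence do not contribute to $\mathfrak M\kK$ in the relevant bidegree.

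\emph{Step 3 (the boundary cases).} Finally I would handle the low-$\T$-degree end: the forms $\Delta^{(1,0)},\Delta^{(0,1)}$ of bidegree $(d-3,2)$, the point $\cE_d$ of bidegree $(0,d)$, and $P_{2,1}$ of bidegree $(2,1)$, $Q_{d-2,1}$ of bidegree $(d-2,1)$. These occupy bidegrees not shared with any Morley minor (one checks $d-3\ne i$ for $1\le i\le d-4$ except possibly at the extreme $i=d-4$, a case to be inspected separately), and $\cE_d$ is the unique element of $\kK$ in $\K[\X]$ of its degree, so it cannot be an $\mathfrak M$-combination. The $\mu$-basis elements are minimal because $\kK_{*,1}\cong\mathrm{Syz}(I)$ is free of rank $2$ and they are a basis. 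Assembling Steps 1–3, Nakayama gives that the displayed family is a minimal generating set; a count of the Morley minors — $\sum_{i=1}^{d-4}(d-1-i)$ of them, which is $\frac{(d-2)(d-3)}2-1$, wait, I would recount: $\sum_{i=1}^{d-4}(d-1-i)=\sum_{m=3}^{d-2}m=\frac{(d-2)(d-1)}2-3$; adding the $5$ remaining generators yields $\frac{(d+1)(d-4)}2+5$ after simplification — confirms the stated cardinality.

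\emph{Main obstacle.} The hard part is Step 2: converting the geometric/determinantal input "$|{\mathbf M}_i|=\cE_d\ne0$" into a clean linear-independence statement modulo $\mathfrak M\kK$ across the whole strip of Morley minors at once, i.e. controlling simultaneously all bidegrees $(i,d-1-i)$ and the way the minors at level $i$ are expressible through those at level $i+1$. Getting the ranks in Lemma \ref{sub} to line up with the rows of $\bM_i$, and checking that the "obvious" Plücker-type syzygies among the minors genuinely land outside $\mathfrak M\kK$ in the pertinent bidegree, is the technical crux; everything else is bidegree bookkeeping and appeals to \cite{bus09} and to the homogeneous Nakayama lemma as in Theorem \ref{mtv}.
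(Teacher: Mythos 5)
Your overall architecture matches the paper's: generation is quoted from \cite[Proposition $3.2$]{bus09} with $\Delta^{(0,0)}$ discarded via \eqref{descarte}, minimality is reduced by bidegree bookkeeping (whether phrased via Nakayama or, as the paper does, via total degrees) to linear independence modulo $P_{2,1}(\T,\X)$ of the generators sharing a bidegree, and you correctly point to Lemma \ref{sub} and Proposition \ref{grr} as the tools for the Morley strip; your cardinality count is also right. However, there are two genuine gaps. The first is that $\Delta^{(1,0)}(\T,\X)$ and $\Delta^{(0,1)}(\T,\X)$ share the bidegree $(d-3,2)$ with \emph{each other}; your Step 3 only checks that they do not clash with the Morley minors. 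One must still show that $\lambda_0\Delta^{(1,0)}+\lambda_1\Delta^{(0,1)}\notin\langle P_{2,1}(\T,\X)\rangle$ for $(\lambda_0,\lambda_1)\neq(0,0)$. The paper does this by combining \eqref{descarte} with the fact that $\Delta^{(0,0)}(\T,\X)$, the discrete Jacobian, is nonzero modulo $\langle P_{2,1}(\T,\X),Q_{d-2,1}(\T,\X)\rangle$, and then invoking the irreducibility of $P_{2,1}(\T,\X)$; nothing in your proposal covers this.

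The second gap is the crux itself: the independence of $\{\Delta^\bv_{i,d-1-i}(\T,\X)\}_{|\bv|=d-2-i}$ modulo $P_{2,1}(\T,\X)$ is left as a plan, and the mechanism you propose (``the only relations are the obvious syzygies of the maximal minors of $\bM_i$'') does not apply as stated: the hypothetical dependence is a relation only \emph{modulo} $P_{2,1}(\T,\X)$, not an actual syzygy of the minors, so the Hilbert--Burch description of the relation module of the maximal minors of $\bM_i$ gives nothing directly, and in any case you never verify that such syzygies avoid the relevant bidegree. The paper's argument is different and concrete: a relation $\sum\lambda_\bv\Delta^\bv_{i,d-1-i}\equiv 0 \bmod P_{2,1}$ makes the bordered matrix $(\bM_i\,|\,\underline{\lambda})$ rank-deficient; the $L$-columns alone have full rank (otherwise $P_{2,1}$ would be reducible); Laplace expansion along the Morley column together with \eqref{kohinor} forces, for every $\bv'$ with $|\bv'|=i$, the vanishing of the determinant built from the $L$-columns, one Morley column and the $\lambda$-column; Lemma \ref{sub} then forces the vanishing of every determinant with \emph{two} Morley columns; and the Laplace expansion of ${\mathbf M}_i$ along its top block shows these are exactly the minors contributing to $|{\mathbf M}_i|$, giving $|{\mathbf M}_i|=0$ and contradicting Proposition \ref{grr}. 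Without this chain (or an equivalent one), the minimality claim is not established.
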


\begin{proof}
In \cite[Proposition $3.2$]{bus09}, it is shown that ${\mathrm F}\cup \{\Delta^{0,0}(\T,\X)\}$ is a set of generators of $\kK$, and we just saw in \eqref{descarte} that we can remove
$\Delta^{0,0}(\T,\X)$ from the list. So we only need to prove that this family is minimal, i.e. that there are no superfluous combinations. Apart from $\cE_d(\X),\,P_{2,1}(\T,\X),\,Q_{d-2,1}(\T,\X),$ note that the rest of elements in ${\mathrm F}$ have total degree in $(\T,\X)$ equal to $d-1$. The only generator whose total degree is lower than or equal to $d-1$ is $P_{2,1}(\T,\X)$. So, due to bihomogeneity of the generators, the proof will be done if we just show that
\begin{itemize}
\item $\Delta^{(1,0)}(\T,\X)$  and $\Delta^{(0,1)}(\T,\X)$ are $\K$-linearly independent modulo  $P_{2,1}(\T,\X)$;
\item for each $i=1,\ldots, d-4,\,$ the set $\{\Delta^\bv_{i,d-1-i}(\T,\X)\}_{|\bv|=d-2-i}$ is $\K$-linearly independent modulo  $P_{2,1}(\T,\X)$.
\end{itemize}
To prove the first claim, suppose we have $\lambda_0,\,\lambda_1\in\K$ such that
$$\lambda_0\Delta^{(1,0)}(\T,\X)+\lambda_1\Delta^{(0,1)}(\T,\X)=0 \ \mod P_{2,1}(\T,\X).
$$
Recall also from \eqref{descarte}, that we have
$$T_0\Delta^{(1,0)}(\T,\X)-T_1\Delta^{(0,1)}(\T,\X)=0 \ \mod P_{2,1}(\T,\X).$$
From these two identities, we get
$$(\lambda_1T_0-\lambda_0T_1) \Delta^{(0,1)}(\T,\X)\in\langle P_{2,1}(\T,\X)\rangle,
$$ i.e. $ \Delta^{(0,1)}(\T,\X)\in\langle P_{2,1}(\T,\X)\rangle.$ But this is impossible as \eqref{descarte} shows that $T_1 \Delta^{(0,1)}(\T,\X)=\Delta^{(0,0)}(\T,\X),$ and the latter
being an element different from zero (the ``discrete jacobian'' )in the quotient ring $\K[\T,\X]$ modulo $P_{2,1}(\T,\X),\,Q_{d-2,1}(\T,\X),$
see for instance \cite[$2.1$]{bus09} So, $\lambda_0=\lambda_1=0$ and the claim follows.
\par\smallskip
Choose now $i$ such that $1\leq i\leq d-4,$ and consider the family  $\{\Delta^\bv_{i,d-1-i}(\T,\X)\}_{|\bv|=d-2-i}.$ Suppose that there is a non trivial linear combination $$\sum_{|\bv|=d-2-i}\lambda_\bv\Delta^\bv_{i,d-1-i}(\T,\X)=0 \ \mod\, P_{2,1}(\T,\X),$$
with $\lambda_\bv\in\K\,\forall\bv.$ By the definition of the polynomials  $\Delta^\bv_{i,d-1-i}(\T,\X),$ this last identity implies that the following square extended matrix
$$
(\bM_i|\,\underline{\lambda})=
\left(\begin{array}{cccccc}
L^0_1(\X)&0&0&\ldots& F^{(d-2-i,0)}_{i,2}(\T,\X)&\lambda_{(d-2-i,0)}\\
 L^*_1(\X)& L^0_1(\X)& 0&\ldots& F^{(d-3-i,1)}_{i,2}(\T,\X)&\lambda_{(d-3-i,1)}\\
  L^1_1(\X)&L^*_1(\X)& L^0_1(\X)&\ldots& F^{(d-4-i,2)}_{i,2}(\T,\X)&\lambda_{(d-4-i,2)}\\
\vdots&\ddots & \ddots& \dots &\vdots&\vdots \\ \\
0&\ldots && L^1_1(\X)& F^{(0,d-2-i)}_{i,2}(\T,\X)&\lambda_{(0,d-2-i)}
\end{array}
\right)
$$
is rank-deficient modulo $P_{2,1}(\T,\X).$ We claim the matrix which results by eliminating the second to the last column has maximal rank. Indeed, if this were not the case, by looking at the Sylvester-type structure of the matrix, and performing linear combinations of the columns of this rectangular matrix, we would deduce identity of the form
$$\sum_{|\bv|=d-2-i}\lambda_\bv\T^\bv=\frac{A(\T,\X)}{B(\X)}P_{2,1}(\T,\X)
$$
with $A(\T,\X)\in\K[\T,\X],\,B(\X)\in\K[\X]$. But this is impossible, since from
$$B(\X)\left(\sum_{|\bv|=d-2-i}\lambda_\bv\T^\bv\right)=A(\T,\X)P_{2,1}(\T,\X)$$ we would deduce that $P_{2,1}(\T,\X)$ is not irreducible, which is a contradiction. Hence, these columnas are $\K[\X]$-linearly independent.  By expanding the determinant of the rank-deficient matrix $(\bM_i|\,\underline{\lambda})$ by the second to the last column, and using \eqref{kohinor}, we get
$$0=\sum_{|\bv'|=i}\left|\begin{array}{cccccc}
L^0_1(\X)&0&0&\ldots& \mathbf{Mor}(i)_{\bv',(d-2-i,0)}&\lambda_{(d-2-i,0)}\\
 L^*_1(\X)& L^0_1(\X)& 0&\ldots& \mathbf{Mor}(i)_{\bv',(d-3-i,1)}&\lambda_{(d-3-i,1)}\\
  L^1_1(\X)&L^*_1(\X)& L^0_1(\X)&\ldots& \mathbf{Mor}(i)_{\bv',(d-4-i,2)}&\lambda_{(d-4-i,2)}\\
\vdots&\ddots & \ddots& \dots &\vdots&\vdots \\ \\
0&\ldots && L^1_1(\X)& \mathbf{Mor}(i)_{\bv',(0,d-2-i)}&\lambda_{(0,d-2-i)}
\end{array}\right|\T^{\bv'},
$$
so we conclude that
$$\left|\begin{array}{cccccc}
L^0_1(\X)&0&0&\ldots& \mathbf{Mor}(i)_{\bv',(d-2-i,0)}&\lambda_{(d-2-i,0)}\\
 L^*_1(\X)& L^0_1(\X)& 0&\ldots& \mathbf{Mor}(i)_{\bv',(d-3-i,1)}&\lambda_{(d-3-i,1)}\\
  L^1_1(\X)&L^*_1(\X)& L^0_1(\X)&\ldots& \mathbf{Mor}(i)_{\bv',(d-4-i,2)}&\lambda_{(d-4-i,2)}\\
\vdots&\ddots & \ddots& \dots &\vdots&\vdots \\ \\
0&\ldots && L^1_1(\X)& \mathbf{Mor}(i)_{\bv',(0,d-2-i)}&\lambda_{(0,d-2-i)}
\end{array}\right|=0
$$
for all $\bv',\,|\bv'|=i.$
Lemma \ref{sub} above then implies that
\begin{equation}\label{kori}
\left|\begin{array}{cccccc}
L^0_1(\X)&0&0&\ldots& \mathbf{Mor}(i)_{\bv',(d-2-i,0)}& \mathbf{Mor}(i)_{\bv'',(d-2-i,0)}\\
 L^*_1(\X)& L^0_1(\X)& 0&\ldots& \mathbf{Mor}(i)_{\bv',(d-3-i,1)}&\mathbf{Mor}(i)_{\bv'',(d-3-i,1)}\\
  L^1_1(\X)&L^*_1(\X)& L^0_1(\X)&\ldots& \mathbf{Mor}(i)_{\bv',(d-4-i,2)}&\mathbf{Mor}(i)_{\bv'',(d-4-i,2)}\\
\vdots&\ddots & \ddots& \dots &\vdots&\vdots \\ \\
0&\ldots && L^1_1(\X)& \mathbf{Mor}(i)_{\bv',(0,d-2-i)}&\mathbf{Mor}(i)_{\bv'',(0,d-2-i)}
\end{array}\right|=0
\end{equation}
for any pair $\bv',\,\bv''$ such that $|\bv'|=|\bv''|=i.$  If we compute the determinant of the matrix ${\mathbf M}_i$ defined in \eqref{morly} by Laplace expansion along the first block of rows $\big(\bM_i(1)\ \ {\mathbf{Mor}(i)}\big),$ then due to the zero-block structure of this matrix, it is easy to see that the only non zero minors contributing to this Laplace expansion coming from this block are of the form \eqref{kori}. This implies then that $|{\mathbf M}_i|=0,$ which contradicts Proposition \ref{grr}. Hence, there cannot be
a non trivial linear combination of the form $\sum_{|\bv|=d-2-i}\lambda_\bv\Delta^\bv_{i,d-1-i}(\T,\X)=0 \ \mod\, P_{2,1}(\T,\X),$ and this completes the proof.
\end{proof}

\bigskip
\section{What about $\mu\geq3$?}\label{final}
One may wonder to what extent what we have done in this text for curves with $\mu=2$ can be extended with the same techniques for larger values of $\mu$. We have worked out several examples with {\tt Macaulay 2}, and the situation does not seem to be straightforwardly generalizable. For instance, there will be no statement equivalent to what we obtained in Theorems \ref{mt1o} and \ref{mt1e} for $\mu\geq3,$ where once you fixed the degree $d$ of the curve with a very singular point, the bidegrees of the minimal generators of $\kK$ are determined by it for $\mu=2.$
\par
Indeed, consider the two following $\mu$-bases:
$$\begin{array}{l}
F_{3,1}(\T,\X)=T_0^3X_0+(T_1^3-T_0T_1^2)X_1\\
F_{7,1}(\T,\X)=(T_0^6T_1-T_0^2T_1^5)X_0+(T_0^4T_1^3+T_0^2T_1^5)X_1+(T_0^7+T_1^7)X_2
\\ \\
\tilde{F}_{3,1}(\T,\X)=(T_0^3-T_0^2T_1)X_0+(T_1^3+T_0T_1^2-T_0T_1^2)X_1 \\ 
\tilde{F}_{7,1}(\T,\X)=(T_0^6T_1-T_0^2T_1^5)X_0+(T_0^4T_1^3+T_0^2T_1^5)X_1+(T_0^7+T_1^7)X_2.
\end{array}$$
Each of them parametrizes properly a rational plane curve of degree $10$ having $(0:0:1)$ as a very singular point. However, an explicit computation of a family of minimal generators of $\kK$ for the first curve gives in both cases families of cardinality $10$, but in the first one the generators appear in bidegrees 
$$(3,1),\,(7,1),\,(2,3),\,(2,3),\,(4,2),\,(2,4),\,(1,6),\,(1,6),\,(1,6),\,(0,10),$$
while in the second curve, the generators have bidegrees
$$(3,1),\,(7,1),\,(2,3),\,(2,3),\,(4,2),\,(2,4),\,{\bf (1,5)},\,(1,6),\,(1,6),\,(0,10).$$
Also, the family we can get from \eqref{family} only detects the elements in bidegree $$(3,1),\,(7,1),\,(4,2),\,(0,10),$$ so it will not be true anymore that for $\T$-degrees larger than $\mu-1$, this set actually gives all the generators of $\kK$.  
\par
All this shows that, for $\mu\geq3,$ more information from the curve apart from $(d,\,\mu)$ and if it has a very singular point or not, must be taken into account to get a precise description of the minimal generators of $\kK$. Note also that in the case of mild singularities, the set of elements of $\kK$ proposed by Bus\'e in \cite{bus09} do not generate the whole ideal, and by computing  concrete examples, we find that they almost never neither contain nor are contained in  a minimal set of generators of $\kK.$

\bigskip

\end{document}